\documentclass[12pt]{amsart}
\usepackage{amsmath}
\usepackage{amsthm}
\usepackage{amsfonts} 
\usepackage{hyperref}
\usepackage{epsfig}
\usepackage{amssymb}
\usepackage{dsfont}
\usepackage{graphicx}
\usepackage{subfigure}
\usepackage{color}

\addtolength{\textwidth}{1.2cm}
\hoffset=-0.5cm

\numberwithin{equation}{section}

\allowdisplaybreaks

\newcommand{\cred}{\color{red}}


\newtheorem{prop}{Proposition}[section]
\newtheorem{rem}[prop]{Remark}
\newtheorem*{lem*}{Lemma}
\newtheorem{lem}[prop]{Lemma}
\newtheorem{theo}[prop]{Theorem}
\newtheorem{coro}[prop]{Corollary}

\newtheorem{definition}[prop]{Definition}
\newtheorem*{claim*}{Claim}
\newtheorem*{remerciements*}{Acknowledgements}

\newcommand{\Cst}{\mathsf{Cst}}
\newcommand{\e}{\epsilon}

\newcommand{\beq}{\begin{eqnarray}}
\newcommand{\beqq}{\begin{eqnarray*}}
\newcommand{\eeq}{\end{eqnarray}}
\newcommand{\eeqq}{\end{eqnarray*}}
\newcommand{\eps}{\varepsilon}
\newcommand{\Pb}{\mathbb{P}}
\newcommand{\G}{\Gamma}
\newcommand{\N}{\mathbb{N}}

\newcommand{\A}{\mathcal{A}}
\newcommand{\Fn}{\mathcal{F}_n}
\newcommand{\E}{\mathbb{E}}
\newcommand{\V}{\mathbb{V}}
\newcommand{\De}{\Delta}

\newcommand{\simx}{\stackrel{x}{\sim}}
\newcommand{\simG}{\stackrel{\mathcal{G}}{\sim}}

\newcommand{\1}{\mathds{1}}
\newcommand{\Pc}{\mathcal{P}}
\newcommand{\Gc}{\mathcal{G}}
\newcommand{\de}{\delta}
\newcommand{\Es}{\mathbb{E}}
\newcommand{\F}{\mathcal{F}}
\newcommand{\iy}{\infty}

\renewcommand{\le}{\leqslant}
\renewcommand{\leq}{\leqslant}
\renewcommand{\ge}{\geqslant}


\title{Reinforcement learning in social networks}
\date{}
\author[D.~Kious]{Daniel Kious}
\address{Daniel Kious\\Ecole Polytechnique F\'ed\'erale de Lausanne\\ EPFL SB MATHAA PRST\\
MA B1 537, Station 8\\
CH-1015 Lausanne, Switzerland} \email{daniel.kious@epfl.ch}
\author[P.~Tarr\`es]{Pierre Tarr\`es}
\address{Pierre Tarr\`es, CNRS and Université Paris-Dauphine, PSL Research University, Ceremade (UMR 7534),
Place de Lattre de Tassigny, 75775 Paris Cedex 16, France} \email{tarres@ceremade.dauphine.fr}


\begin{document}
\maketitle


\begin{abstract}
We propose a model of network formation based on reinforcement learning, which can be seen as a generalization as the one proposed by Skyrms \cite{APSV,HST} for signaling games. On a discrete graph, whose vertices represent individuals, at any time step each of them picks one of its neighbors with a probability proportional to their past number of communications; independently, Nature chooses, with an independent identical distribution in time, which ones are allowed to communicate. Communications occur when any two neighbors mutually pick each other and are both allowed by Nature to communicate.   

Our results generalize the ones obtained by Hu, Skyrms and Tarr\`es in \cite{HST}. We prove that, up to an error term, the expected rate of communications increases in average, and thus a.s.~converges. If we define the limit graph as the non-oriented subgraph on which edges are pairs of vertices communicating with a positive asymptotic rate, then, for stable configurations, within which every vertex is connected to at least another one, the connected components of this limit graph are star-shaped and satisfy a certain balance condition. Conversely, given any stable equilibrium $q$ whose associated graph satisfies that property, the occupation measure converges with positive probability to a stable equilibrium in a neighborhood of $q$ with the same limit graph.
\end{abstract}


\section{Introduction}\label{sect_intro}


We introduce and analyse a model of network formation, based on one hand on a reciprocity condition - we can talk to somebody only if he conversely wants to talk to us -  and on the other hand on a reinforcement learning procedure - we want to talk more to the ones we already talked to frequently.

Start with a weighted graph $G=(\V,E,\sim)$, where the vertices in $\V$ represent people, and nonoriented edges in $E$ represent links between them. 

Let $\A:=(a_{ij})_{i,j\in\V}$ be a collection of nonnegative real numbers such that, for any $i,j\in\V$, $a_{ij}=a_{ji}$, and if $a_{ij}>0$ then $\{i,j\}\in E$; $a_{ij}$ represents the affinity between $i$ and $j$. Let the network $G_\A$ be the graph $G$ with weights $\A$.

Now consider the following game on the network $G_{\mathcal{A}}$, whose players are the vertices of $G$. 
The game is played in infinitely many rounds (one at each time step), and each of them consists in the following procedure:
\begin{itemize}
\item each vertex $i\in \mathbb{V}$ chooses one, and only one, of its neighbours $j\sim i$;
\item Nature independently picks a subset $V\subseteq\mathbb{V}$ of vertices, allowed to communicate;
\item if $i$, $j$ $\in V$ are neighbours, i.e.~$i\sim j$, and if $i$ and $j$ mutually choose each other, then a communication occurs between them and they both receive a payoff equal to their affinity $a_{ij}$.
\end{itemize}

We model the choice by Nature of the set of vertices which are allowed to communicate at each time step by random independent identical Bernoulli distributions on the set of subsets of $\mathbb{V}$, denoted by $\Pc(\mathbb{V})$. 

More precisely, we let $(p_V)_{V\in\Pc(\mathbb{V})}$ be a family of nonnegative real numbers such that
$$\sum_{V\in\Pc(\mathbb{V})} p_V=1,$$
and assume that the sequence of subsets $V_n\in\Pc(\mathbb{V})$ chosen by Nature at time $n\in\N$ is an i.i.d. sequence of Bernoulli random variables with probability $(p_V)_{V\in\Pc(\mathbb{V})}$.

Note that, given any two adjacent vertices  $i,j\in\V$, $i\sim j$, 
$$p_{ij}:= \sum_{V\in\Pc(\mathbb{V}):\,i,j\in V}p_V$$
is the probability that $i$ and $j$ are both allowed to communicate by Nature at any time step. By convention, we fix $p_{ij}=0$ when $i\nsim j$. The numbers $a_{ij}p_{ij}$, $i\sim j$, will be important in the results.  In general  the nonoriented edges $\{i,j\}$ are not chosen independently of each other by Nature.  

Let us describe three noticeable choices of $(p_V)_{V\in\Pc(\mathbb{V})}$. 

The most obvious one is $p_\V=1$, and $p_V=0$ if $V\neq\V$, where any adjacent pair of vertices is allowed to communicate together at any time.

Another natural choice is that of a sequence $(p_V)_{V\in\Pc(\mathbb{V})}$, such that $p_V\ne0$ if and only if (iff) $V=\{i\}\cup\{j:j\sim i\}$ for some $i\in\V$: at each time step only one particular (random) vertex $i$ and its neighbours are allowed  by Nature to communicate together.

The third choice is that of a signaling game, on which the same reinforcement learning procedure was studied in \cite{HST}. Let $\mathcal{S}_1$ and $\mathcal{S}_2$ be two disjoint subsets spanning $\V$, i.e. $\V=\mathcal{S}_1\cup\mathcal{S}_2$, $\mathcal{S}_1\cap\mathcal{S}_2=\emptyset$. Then assume $p_{V}\ne0$ iff $V=V_i:=\{i\}\cup\mathcal{S}_2$ for some $i\in\mathcal{S}_1$. Then $p_{ij}>0$ only if $i\in\mathcal{S}_p$, $j\in\mathcal{S}_q$, $p$, $q$ $\in\{1,2\}$, $p\ne q$. In other  words the graph on which communications occur is bipartite in this case, see Figure \ref{bipartite}.
In \cite{HST}, the bipartite graph is complete, and it is assumed that $p_{V_i}=1/M_1$, where $M_1$ is the number of vertices in $\mathcal{S}_1$, but the condition is not required in the current paper.
\begin{figure}[h]
\center
\includegraphics{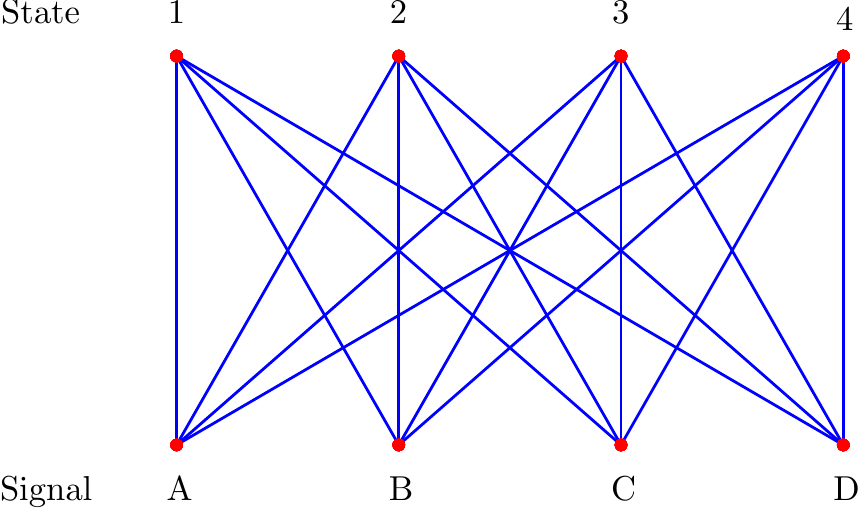}
   \caption{\label{bipartite} In the signaling game, $\mathcal{S}_1$ and $\mathcal{S}_2$ correspond respectively to the sets of states of Nature and signals: the agents learn to signal, i.e. they create a common language.}
\end{figure}

The game being defined, let us now describe how the individuals pick each other. The model we consider here is reinforcement learning, a trial-and-error procedure in which agents are more likely to use the strategies that have given more payoff before, which in our case means that, \emph{the more an agent has already talked to somebody, the more he will be likely to again  talk to him}. At each round, each vertex will choose one of its neighbours with a probability that is proportional to the payoff they shared together so far.

More precisely, for all $i,j\in\V$, let us define the \emph{cumulative payoff} $V_{ij}^n$ (resp. $V_{i}^n$) on the edge $\{i,j\}$ (resp. vertex $i$) at time $n$ by 
\begin{align*}
V_{ij}^n&=v_{ij}^0+a_{ij}N_{ij}^n,\\
V_i^n&=\sum_{j\in\V:j\sim i} V_{ij}^n,
\end{align*} 
where $v^0_{ij}$ is a nonnegative constant such that $v^0_{ij}>0$ iff $i\sim j$, and $N_{ij}^n$ is the number of communications that succeeded between $i$ and $j$ at time $n$. If $i\nsim j$, we set $V_{ij}^n=0$ for all $n$. Note that we could have $i\sim j$ (so that $v_{ij}^0>0$) but $a_{ij}=0$, in which case $V_{ij}^n=v_{ij}^0$ for all $n$.

Now we assume that, at each time step, any vertex $i\in\V$ chooses one of its neighbours $j$ with probability
$$V_{ij}^n/V_i^n.$$
By symmetry, $j$ also chooses $i$ among its neighbours with probability
$$V_{ij}^n/V_j^n.$$
Let $\mathcal{F}:=(\Fn)_n$ be the filtration generated by the process, i.e $$\Fn:=\sigma((V_{ij}^0,...,V_{ij}^n),i,j\in\V), \,\,\,n\in\mathbb{N}.$$
Then, the probability of a communication through an edge $ij\in E$ is
\beq\label{proba_succes}
\Pb\left.\left(V_{ij}^{n+1}=V_{ij}^n+a_{ij}\right|\Fn\right)=p_{ij}\frac{(V_{ij}^n)^2}{V_i^nV_j^n},
\eeq
and the communication is considered to be successful if $V_{ij}^{n+1}=V_{ij}^n+a_{ij}$ and $a_{ij}>0$. 

We have now  described our learning procedure of the network game.
Observe that reinforcement learning is one of a number of models of strategic learning in games, in which players adapt their strategies, with the (possibly unconscious) aim to eventually maximize their payoffs, amongst for instance no-regret learning, fictitious play and its variants, and hypothesis testing, see \cite{young:2005}. 

In the spectrum of adaptive procedures, reinforcement learning certainly does not provide an optimal strategy, and rather assumes that players have bounded rationality. In particular they do not need to know which game the other agents are playing, or which strategies they are playing. It is therefore attractive as a simple behavioral model, since it does not require the agents to be  entirely devoted to their task and, in our setting, even to be aware that they are involved in the formation of a network, but on the contrary just to observe their effective payoffs. Moreover it accumulates inertia, since the relative increase in payoff decreases in time, which would make it more stable with respect to randomness of the payoffs for instance, and in general avoid instability of the strategies. 

Models of network formation using reinforcement learning were already proposed by Pemantle and Skyrms~\cite{RPBS} as follows: each day, each individual chooses one of his neighbours and talks to him, the communication being always accepted by the neighbour; then both of them reinforce the probability to talk to each other. An important difference is that, in our model, a reciprocity assumption is made, in the sense that the communication is not necessarily accepted by the chosen neighbour.\\
A similar model, with strong reinforcement, has been proposed by Van der Hofstad {\it et al.~}in the context neural network formation, see \cite{vdH}.\\

Let us now summarize the results obtained in this paper, which are described in more detail in Section \ref{sect_th}. As explained above, the network game model can be seen as a generalisation of the signaling game \cite{Sky10}, on which the same reinforcement learning procedure was analyzed by Hu, Skyrms and Tarr\`es~\cite{HST}. Our results are generalisations of the ones in \cite{HST}, and the proofs adapt their techniques to a more general setting.

The main tools are stochastic approximation techniques. In particular, we will approach the behavior of some stochastic process by a deterministic ordinary differential equation (ODE), with the difficulty that the function governing the ODE is not continuous on the boundary of the simplex on which it is defined.  One could instead show that the process, with a different system of coordinates, approximates a smooth dynamics, but it would then take place in an unbounded domain, which would lead to other technical difficulties (see Section~\ref{sect_bord}).   

First, we prove  that there is a Lyapunov function for the deterministic ODE, which enables us to deduce in Theorem~\ref{th_pot} the a.s.~convergence of the expected payoff of the random dynamics. Then we prove in Theorem~\ref{th_eq} that the occupation measure of the communications converges a.s.~to the set of equilibria of the ODE.

In Proposition \ref{Prop:unstable0}, we show a property which characterizes the stable equilibria of the ODE: if we define the limit graph as the non-oriented subgraph on which edges are pairs of vertices communicating successfully infinitely often, the connected components of this limit graph are star-shaped  components, in other words \emph{core and shell} (see Figure~\ref{stab_conf}), satisfying furthermore a balance condition on the affinities $a_{ij}$ and probabilities $p_{ij}$. In particular each connected component contains a nucleus vertex linked to (one or several) satellite vertices which are only linked to the nucleus. 

In Theorem~\ref{th_posprob} we prove that any graph correspondence with the preceding property, such that no vertex is falling out of use (see Section \ref{sect_th} for a precise definition), is a limit configuration with positive probability.

\section{Main results}\label{sect_th}
We let  $\Cst(a_1,a_2,\ldots, a_p)$ denote a positive constant depending
only on  $a_1$, $a_2$, $\ldots$ $a_p$, and let $\Cst$  denote a  universal
positive constant.

Given two real functions $f$ and $g$ defined on a set $D$, we write $f=O(g)$ iff there exists a constant $C>0$ such that $|f(x)|\le C g(x)$ for all $x\in D$.

First, notice that if there is no $i,j\in \V$ such that $a_{ij}p_{ij}>0$, then we are the trivial case where no successful communication can ever occur hence the system is just frozen in its initial configuration. Therefore, we assume that there exist $i,j\in \V$ such that $a_{ij}p_{ij}>0$ for the rest of the paper.\\

For all $n\in\mathbb{N}$, define:
\beq
T_n&:=&\sum_{i,j\in\V} V_{ij}^n,\label{def_Tn}\\
x_{ij}^n&:=&\frac{V_{ij}^n}{T_n}\text{, for all }i,j\in\V,\nonumber\\
x_{i}^n&:=&\frac{V_{i}^n}{T_n}=\sum_{j\in\V} x_{ij}^n\text{, for all }i\in\V.\nonumber
\eeq
Let $x_n:=(x_{ij}^n)_{i,j\in\V}$ be the \emph{occupation measure} at time $n$. Note that $\sum_{i,j}x_{ij}^n=1$. 
Moreover, it is straightforward to see that for all $i,j\in \V$, $j\sim i$, with $a_{ij}p_{ij}=0$, we have $V^n_{ij}=v^0_{ij}$ a.s.~for all $n\in\mathbb{N}$. Hence, for any $n\in \mathbb{N}$,
\begin{align}\label{def_h1}
\sum_{i,j: a_{ij}p_{ij}>0} {x^n_{ij}}=1-\sum_{i,j: a_{ij}p_{ij}=0} \frac{v^0_{ij}}{T_n}
\ge1-\sum_{i,j: a_{ij}p_{ij}=0} \frac{v^0_{ij}}{T_0}=\sum_{i,j: a_{ij}p_{ij}>0} {x^0_{ij}}=:h_1\text{ a.s.}
\end{align}
Moreover,  $h_1>0$ as we assumed that there exists some $i,j\in\V$ such that $a_{ij}p_{ij}>0$, which implies $v_{ij}^0>0$.\\
Therefore,  the occupation measure $x_n$ belongs to the simplex:
\begin{align}\label{def_delta}
\Delta:=\Big\{(x_{ij})_{i,j\in\V}: \sum_{i,j\in\V}x_{ij}=1\text{ and }\sum_{i,j: a_{ij}p_{ij}>0} {x_{ij}}\ge h_1,\\ \nonumber
\text{where } x_{ij}=x_{ji}\ge0\text{, and } x_{ij}=0 \text{ if } i\nsim j\Big\}.
\end{align}

\begin{rem}
Let us comment the restriction $\sum_{i,j: a_{ij}p_{ij}>0} {x_{ij}}\ge h_1$ in the definition of $\Delta$. This will be useful in the analysis of the equilibria and their stability done in Section \ref{sect_eq}. Indeed, without this assumption, we could find some pathological equilibria which are of no interest for the random process we study.
\end{rem}

In order to define the \emph{expected payoff}, let us compute the conditional increment of $T_n$, for any $n\in\mathbb{N}$, recalling \eqref{proba_succes},
\beq\label{incr_T}
\E\left(\left.T_{n+1}-T_n\right|\mathcal{F}_n\right)&=& \sum_{i,j\in\V} \E\left(\left.V_{ij}^{n+1}-V_{ij}^n\right|\mathcal{F}_n\right)=\sum_{i,j\in\V} a_{ij}p_{ij}\frac{(x_{ij}^n)^2}{x_i^nx_j^n}.
\eeq

\begin{definition}
Let $H:\mathbb{R}_+^{V\times V}\longrightarrow\mathbb{R}_+$ be the function defined, for all $x\in\Delta$, by
\[
H(x):=\sum_{i,j\in\V:x_{ij}>0}a_{ij}p_{ij}\frac{x_{ij}^2}{x_ix_j}.
\]
If $x\in\De$, we call $H(x)$ the expected payoff at $x$.
\end{definition}

In this paper we use stochastic approximation techniques, namely we compare the evolution of the random process $(x_n)$ to the behavior of the deterministic dynamics driven by the mean-field ODE
\beq\label{ode}
\frac{dx}{dt}=F(x),
\eeq
where $F$ is a function from $\Delta$ to $T\Delta$, tangent space of $\Delta$, which maps $x$ to
\[
\label{deff}
F(x)=\Bigg[x_{ij}\big(a_{ij}p_{ij}\frac{x_{ij}}{x_ix_j}-H(x)\big)\Bigg]_{i,j\in\V},
\]
with the convention that $F(x)_{ij}=0$ if $x_{ij}=0$ and that $a_{ij}p_{ij}x_{ij}^2/x_ix_j=0$ whenever $a_{ij}p_{ij}=0$.
We will make the link between the ODE and the random process explicit in Section~\ref{sect_ode}.

 As we will see, $H$ is a Lyapunov function for the ODE~\eqref{ode}, which should imply that, up to a small error term, the random process $(H(x_n))_n$ increases in average. We are indeed able to show the convergence of $(H(x_n))_n$, and hence the asymptotic linear growth of $(T_n)_n$ by conditional Borel-Cantelli Lemma, see Corollary~\ref{cor_Tn}. The proof  is technical, since the function $H$ is irregular on the boundary of the simplex.

\begin{theo}\label{th_pot}
The expected payoff process $(H(x_n))_{n\in\N}$ converges almost surely. 
\end{theo}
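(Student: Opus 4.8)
The plan is to realise $(x_n)$ as a stochastic approximation of the ODE \eqref{ode} and then to show that $(H(x_n))_n$ is, up to an almost surely summable error, a submartingale; since $0\le H\le\sum_{i,j}a_{ij}p_{ij}$ is bounded on $\De$ (because $x_{ij}^2\le x_ix_j$), its almost sure convergence will follow from the Robbins--Siegmund almost-supermartingale convergence lemma. First I would write the one-step recursion. Setting $\Delta V_{ij}^n:=V_{ij}^{n+1}-V_{ij}^n$ and $\Delta T_n:=T_{n+1}-T_n$, an elementary computation gives
\[
x_{ij}^{n+1}-x_{ij}^n=\frac{1}{T_{n+1}}\left(\Delta V_{ij}^n-x_{ij}^n\,\Delta T_n\right).
\]
Taking conditional expectations, using \eqref{proba_succes}--\eqref{incr_T}, and expanding $1/T_{n+1}=1/T_n+O(1/T_n^2)$ (licit since $\Delta T_n\le\sum_{i,j}a_{ij}$ is bounded), one finds $\Es(x_{n+1}-x_n\mid\Fn)=T_n^{-1}F(x_n)+O(T_n^{-2})$, so that $(x_n)$ tracks \eqref{ode} with effective step $\gam_n\asymp1/T_n$. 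Note that, since $v^0_{ij}>0$ for every $i\sim j$, one has $V_{ij}^n\ge v^0_{ij}>0$, hence $x_n$ stays in the relative interior of $\De$ and $H(x_n),\nabla H(x_n),\nabla^2H(x_n)$ are always finite; the only difficulty is that they may blow up as $x_n$ approaches $\partial\De$.

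Second, I would expand $H$ to second order along the increment,
\[
H(x_{n+1})-H(x_n)=\langle\nabla H(x_n),x_{n+1}-x_n\rangle+\tfrac12\,(x_{n+1}-x_n)^{\!\top}\nabla^2H(\xi_n)\,(x_{n+1}-x_n),
\]
for some $\xi_n$ on the segment $[x_n,x_{n+1}]$. Taking $\Es(\,\cdot\mid\Fn)$, the conditional mean of the first term equals $T_n^{-1}\langle\nabla H(x_n),F(x_n)\rangle$ up to an $O(T_n^{-2})$ correction, and $\langle\nabla H,F\rangle\ge0$ because $H$ is a Lyapunov function for \eqref{ode}; this is the nonnegative ``good'' drift. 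The conditional expectation of the quadratic remainder is a finite sum over the possible communications at step $n$, each weighted by its probability $p_{ij}(x_{ij}^n)^2/(x_i^nx_j^n)$ from \eqref{proba_succes}. Granting that the negative part of this remainder, together with the $O(T_n^{-2})$ first-order corrections, is almost surely summable, say bounded by $\eps_n$ with $\sum_n\eps_n<\infty$ a.s., the nonnegative process $Y_n:=\sum_{i,j}a_{ij}p_{ij}-H(x_n)$ satisfies $\Es(Y_{n+1}\mid\Fn)\le Y_n+\eps_n$, and Robbins--Siegmund gives the almost sure convergence of $Y_n$, hence of $(H(x_n))_n$.

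The main obstacle is precisely this last estimate, which is the boundary irregularity of $H$ announced before the statement. The jumps themselves are harmless: a nontrivial increment occurs only at a communication, has size $\asymp a_{ij}/T_n$, and $T_n$ increases by at least $\min\{a_{ij}:a_{ij}>0\}$ at each such event, so $\sum_n|x_{n+1}-x_n|^2<\infty$ unconditionally. The danger is that $\nabla^2H$ (and $\nabla H$) grow like $x_i^{-3}$ (resp.\ $x_i^{-1}$) as a coordinate $x_i\to0$, so a crude bound of the form $O(|x_{n+1}-x_n|^2\,\|\nabla^2H\|)$ is not summable. The mechanism to exploit is that every singular curvature on an edge $\{i,j\}$ appears in $\Es(\,\cdot\mid\Fn)$ multiplied by the communication probability $p_{ij}(x_{ij}^n)^2/(x_i^nx_j^n)$, which is itself small exactly where $x_{ij}^n$ is small; using the simplex constraints $x_{ij}\le x_i$, $x_{ij}\le x_j$ and the bound $\sum_l x_{il}^2/x_l\le x_i$ one can try to balance these competing factors, splitting the edges into those with $x_{ij}^n$ bounded below (where Taylor applies safely) and those with $x_{ij}^n$ small (estimated directly from the two-valued discrete increments). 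The genuinely delicate point, which I expect to be the crux of the whole argument, is that this balancing still breaks down, term by term, at a \emph{nearly extinct} vertex with $x_i^n\asymp1/T_n$, where the naive per-step bound is only $O(1/T_n)$ and thus not summable; resolving it seems to require a more global accounting—charging the accumulated error to the actual realised communications along the trajectory, or a localisation/stopping argument exploiting that such singular events are rare—rather than a purely local, step-by-step estimate.
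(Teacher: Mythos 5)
There is a genuine gap: your argument is explicitly conditional on the summability of the negative part of the second-order remainder (``Granting that\dots''), and you yourself conclude that the local, per-step estimate breaks down at a nearly extinct vertex with $x_i^n\asymp 1/T_n$, where $\nabla^2H$ blows up like $x_i^{-3}$ while the naive bound is only $O(1/T_n)$. Since this is precisely the boundary irregularity that makes the theorem nontrivial, the proposal does not contain a proof; the Robbins--Siegmund framing and the identity $\nabla H\cdot F=p\ge0$ are correct but are the easy part.

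The paper's proof avoids the problem entirely by never Taylor-expanding $H$ as a function of the normalised occupation measure. Because $H(x_n)=\sum_{i,j}a_{ij}p_{ij}(V_{ij}^n)^2/(V_i^nV_j^n)$ is scale invariant (the $T_n$'s cancel), each summand is expanded via $(1+a)^2/((1+b)(1+c))=1+2a-b-c+O(a^2+b^2+c^2)$ with $a=\Delta_{ij}^{n+1}/V_{ij}^{n}$, $b=\Delta_i^{n+1}/V_i^{n}$, $c=\Delta_j^{n+1}/V_j^{n}$; the conditional expectation of the first-order part sums exactly to $p(x_n)/T_n\ge0$, and the error at step $n$ is bounded by $C\bigl((\Delta_{ij}^{n+1}/V_{ij}^{n})^2+(\Delta_i^{n+1}/V_i^{n})^2+(\Delta_j^{n+1}/V_j^{n})^2\bigr)$. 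This is summable over $n$ \emph{deterministically}: each nonzero increment of $V_{ij}$ (resp.\ $V_i$) raises that same denominator by at least $\min\{a_{kl}:a_{kl}>0\}$, so the $k$-th nonzero term is $O(k^{-2})$. This is exactly the ``charge the accumulated error to the realised communications along the trajectory'' mechanism you gesture at in your last sentence, but the right coordinates ($V$'s rather than $x$'s) make it a one-line observation rather than a delicate balancing act; no dangerous factor $1/x_i^n$ ever appears. To repair your write-up, replace the Hessian bound on the simplex by this self-normalised expansion (or equivalently, group the error by edge and index it by the number of communications already realised on that edge/vertex), after which the ``sum of a convergent process and a bounded submartingale'' conclusion goes through.
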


The next Theorem \ref{th_eq} shows the convergence of $(x_n)_{n\in\mathbb{N}}$ towards the {\it set of equilibria} of the ODE~\eqref{ode}, defined by
\beq\label{def_Gamma}
\Gamma:=\Big\{x\in\Delta\,:\,F(x)=0\Big\}.
\eeq
The equilibria are not isolated in general, and this result does not imply the a.s. convergence of  $(x_n)_{n\in\N}$, which we could not prove in general.\\
We call $x\in\De\setminus\partial\De$ a {\it stable equilibrium} of the ODE ~\eqref{ode} iff $x\in\Gamma$ and the maximum real part of the eigenvalues of the Jacobian matrix of $F$ at $x$ is nonpositive, see Definition~\ref{defstab}. This condition is close to the following one: if the solution of the  ODE~\eqref{ode} starts near $x$, it will remain in its neighbourhood forever (see Section~\ref{sect_eq} for more detail).

\begin{theo}\label{th_eq}
The random processes $(F(x_n))_{n\in\N}$ and  $(x_n)_{n\in\mathbb{N}}$ respectively converge a.s.~to $0$ and to the set of equilibria $\Gamma$.
\end{theo}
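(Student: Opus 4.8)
The plan is to treat $(x_n)$ as a stochastic approximation of the flow \eqref{ode} and to exploit that $H$ is a \emph{strict} Lyapunov function, in the sense that its increase along the flow is controlled from below by $\|F\|^2$. First I would write the one-step recursion in the form
\[
x_{n+1}-x_n=\frac{1}{T_{n+1}}\Big(F(x_n)+\Delta M_{n+1}+\rho_{n+1}\Big),
\]
where $\Delta M_{n+1}$ is a bounded $\F_{n+1}$-martingale increment coming from the randomness of the communications and $\rho_{n+1}$ is a remainder. The effective step size is $1/T_{n+1}$, which by Corollary~\ref{cor_Tn} behaves like $\Cst/n$, so that $\sum_n 1/T_{n+1}=\infty$ while $\sum_n 1/T_{n+1}^2<\infty$ a.s.; since the increments $x_{n+1}-x_n$ are of order $1/T_n$ and the martingale increments are bounded, the noise is square-summable after weighting by the step size (away from the boundary).

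The core of the argument is a computation showing that, for $x$ in the interior of $\De$,
\[
\langle\nabla H(x),F(x)\rangle\ge \Cst\sum_{i,j:\,x_{ij}>0}\frac{F(x)_{ij}^2}{x_{ij}}\ge0,
\]
which is the replicator-type identity behind the Lyapunov property: recall $H(x)=\sum_{i,j}x_{ij}g_{ij}(x)$ and $F(x)_{ij}=x_{ij}(g_{ij}(x)-H(x))$ with $g_{ij}(x)=a_{ij}p_{ij}x_{ij}/(x_ix_j)$, so the leading drift of $H$ is a variance of the $g_{ij}$ weighted by $x_{ij}$. Taking conditional expectations and expanding $H$ to second order along the recursion, I would obtain
\[
\E\big(H(x_{n+1})-H(x_n)\,\big|\,\F_n\big)=\frac{1}{T_{n+1}}\,\langle\nabla H(x_n),F(x_n)\rangle+O\!\left(\tfrac{1}{T_{n+1}^2}\right)+(\text{boundary error}).
\]
Since Theorem~\ref{th_pot} gives that $(H(x_n))_n$ converges a.s., and since the martingale part of $H(x_{n+1})-H(x_n)$ is square-summable and hence converges, the predictable drift must sum to a finite limit; combined with the lower bound above this yields $\sum_n \tfrac{1}{T_{n+1}}\langle\nabla H(x_n),F(x_n)\rangle<\infty$ a.s. As $\sum_n 1/T_{n+1}=\infty$, the nonnegative summand tends to $0$ along a subsequence, and the sum-of-squares bound forces $\liminf_n\|F(x_n)\|=0$.

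To upgrade this to $F(x_n)\to0$ a.s., I would argue that $\|F(x_n)\|$ cannot oscillate: the increments are $O(1/T_n)\to0$, so on any stretch where $\|F(x_n)\|$ stays bounded away from $0$ the process remains in a compact region of the interior of $\De$ on which $F$ is Lipschitz, and a trapping/excursion argument (adapting \cite{HST}) rules out repeated excursions away from $\{F=0\}$ given that the weighted drift series converges. Finally, with $F(x_n)\to0$ and $(x_n)\subset\De$ compact, I would show that any limit point $x^*$ satisfies $F(x^*)=0$: on the interior this is immediate by continuity, while for $x^*\in\partial\De$ one uses the convention $F(x)_{ij}=0$ when $x_{ij}=0$ together with the constraint $\sum_{i,j:\,a_{ij}p_{ij}>0}x_{ij}^n\ge h_1$ to prove that the coordinates which could make $F$ discontinuous vanish in the limit, so $x^*\in\Gamma$ in the sense of \eqref{def_Gamma}, giving $\mathrm{dist}(x_n,\Gamma)\to0$.

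The main obstacle is precisely this boundary analysis: $F$ and $\nabla H$ are discontinuous and unbounded near $\partial\De$, so neither the Taylor expansion controlling the drift of $H$, nor the square-summability of the martingale part, nor the slow-variation argument upgrading $\liminf\|F\|=0$ to $\lim\|F\|=0$, is valid uniformly up to the boundary. Quantifying the boundary error terms, and showing that excursions of $(x_n)$ toward the faces where $x_i^n\to0$ do not destroy the summability of the drift, is the technically delicate point; it is here that the constraint $\sum_{i,j:\,a_{ij}p_{ij}>0}x_{ij}\ge h_1$ defining $\De$ does the essential work, by keeping a fixed fraction of the mass on active edges and thereby bounding the problematic ratios from becoming too singular.
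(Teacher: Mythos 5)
There is a genuine gap at the heart of your argument: the inequality
\[
\langle\nabla H(x),F(x)\rangle\ \ge\ \Cst\sum_{i,j:\,x_{ij}>0}\frac{F(x)_{ij}^2}{x_{ij}}
\]
is false, and the paper explicitly warns against it (see the Remark after Proposition~\ref{prop_Lya}: $H$ is \emph{not} a strict Lyapunov function). The correct identity, proved in Proposition~\ref{prop_Lya} and Lemma~\ref{rec-dl}, is
\[
\langle\nabla H(x),F(x)\rangle=p(x)=2\sum_{i,j}x_{ij}\big(y_{ij}-N_i(x)\big)^2,
\]
where $N_i(x)$ is the \emph{local} average of the efficiencies $y_{ik}$ around the vertex $i$, whereas $F(x)_{ij}^2/x_{ij}=x_{ij}\big(y_{ij}-H(x)\big)^2$ involves the \emph{global} average $H(x)$. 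These two quadratic forms are not comparable: if $G_x$ has two connected components on each of which the $y_{ij}$ are constant but with different constants, then $p(x)=0$ while $F(x)\neq0$. This is exactly why $\Gamma\subsetneq\Lambda$ in Remark~\ref{rem1}. Your Lyapunov/summability argument therefore only yields convergence to $\Lambda=\{p=0\}$ (which is Proposition~\ref{prop_p0} in the paper), not to $\Gamma$, and the subsequent steps ($\liminf\|F(x_n)\|=0$, the excursion upgrade, and the limit-point analysis) all inherit this gap.

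Closing the gap requires a genuinely separate mechanism to exclude limit configurations in $\Lambda\setminus\Gamma$, i.e.\ configurations where some edge carries positive mass $x_{ij}$ but efficiency $y_{ij}$ strictly below $H(x)$. The paper does this via Proposition~\ref{prop:setofequ} ($\limsup_n x_{ij}^n(y_{ij}^n-H(x_n))^-=0$), proved by a two-coordinate drift analysis (Lemmas~\ref{stochapp2}, \ref{lem51}, \ref{lem53}, \ref{2epsilon}): outside the good set $U_{ij}(\eps)\cup\Delta_{\eps^4}$ the efficiency $y_{ij}^n$ is a submartingale (up to summable errors) and $x_{ij}^n$ is a supermartingale, so an underperforming edge either loses its mass or catches up in efficiency; combined with the a.s.\ convergence of $H(x_n)$ and $p(x_n)\to0$, this forces $x_{ij}^n(y_{ij}^n-H(x_n))\to0$ for every edge, hence $F(x_n)\to0$, and then a compactness argument gives convergence to $\Gamma$. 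Your proposal contains no substitute for this step; everything else you write (the stochastic approximation form, the step size $1/T_n\asymp 1/n$, the role of the constraint $\sum_{a_{ij}p_{ij}>0}x_{ij}\ge h_1$ near the boundary) is consistent with the paper, but without the $y_{ij}$-dynamics the conclusion $x_n\to\Gamma$ does not follow.
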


Our next result provides necessary and sufficient conditions for the stability of equilibria of the ODE. 

Let us first define the \emph{boundary} of the simplex as
\beqq
\partial\Delta:=\left\{x\in\Delta:\exists i\in\V \text{ s.t. } \sum_{j\in\V:a_{ij}p_{ij}>0}x_{ij}=0\right\}.
\eeqq

Note that $\partial\Delta$ is not the topological boundary of $\Delta$. If $x_n\to\partial\De$, then one of the vertices \emph{falls out of use}, in the sense that the frequency of its communications asymptotically goes to zero. Let us emphasize that we could have some vertex $i$ falling out of use without making $x_n$ converging to $\partial\Delta$: this happens if $a_{ij}p_{ij}=0$ for all $j\sim i$.

We characterize in Proposition \ref{Prop:unstable0} the stable equilibria $x\in\De\setminus\partial\De$ in terms of a graph structure associated to $x$. 

Let us first introduce some definitions. For any $x\in\Delta$, we define a subgraph $G_x$ of $G$, with adjacency $\simx$, of possible  communications between vertices associated to $x$. Then Definition \ref{def_prop} will introduce the property corresponding to the stability of  equilibria in $\Delta\setminus\partial\Delta$.

 \begin{definition} \label{def_Gx}
Given $x\in \Delta$, let $G_x$ be the subgraph of $G$ with vertices in
$\V$ and adjacency $\simx$ such that $i\simx j$ if and only if $x_{ij}>0$, for all $i,j\in\V$.
\end{definition}
\begin{definition} \label{def_prop}Consider a subgraph $\mathcal{G}$ of $G$, with adjacency $\simG$, and let $\mathcal{C}_1,...,\mathcal{C}_d$ be its connected components. Let $P_\mathcal{G}$ be the following property:
\begin{enumerate}
 \item $\forall m\in\{1,...,d\}$, $i,j,k,l\in\mathcal{C}_m$, s.t. $i\simG j$ and $k\simG l$, $a_{ij}p_{ij}=a_{kl}p_{kl}>0$;
\item $\forall m\in\{1,...,d\}$, $\mathcal{C}_m$ contains at most one vertex with several neighbours;
\item a vertex $i\in \V$  has a corresponding edge within $\mathcal{G}$ if and only if $a_{ij}p_{ij}>0$ for some $j\sim i$.
\end{enumerate}
\end{definition}

\begin{definition}\label{def_nucleus}
Consider a subgraph $\mathcal{G}$ of $G$ which satisfies $P_\mathcal{G}$. For each connected component $\mathcal{C}$ of $\mathcal{G}$, we define the {\it nucleus vertex} of $\mathcal{C}$ as the single vertex $i\in\mathcal{C}$ which has several neighbours, chosen arbitrarily when $\mathcal{C}$ contains exactly two vertices, or as the single vertex in $\mathcal{C}$ when this component consists of an isolated vertex.
\end{definition}

Condition $(1)$ in Definition \ref{def_Gx} is a balance condition on the affinities within a connected component, whereas Condition $(2)$ means that each connected component is star-shaped: if $P_{\mathcal{G}}$ holds then within each connected component, there exists a \emph{nucleus vertex} $i_0$ such that if $j_1\simG i_0$ is a \emph{satellite vertex}, then $j_1\stackrel{\mathcal{G}}{\nsim} i$ for any $i\neq i_0$, see Figure~\ref{stab_conf}.
\begin{figure}[h]
\center
   \includegraphics{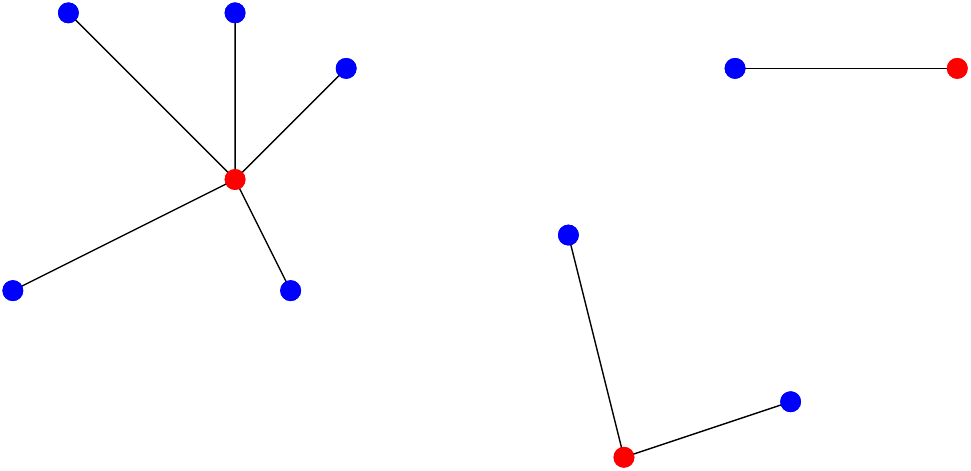}
   \caption{\label{stab_conf} Stable configuration composed of three star-shaped connected components}
\end{figure}
Finally, Condition $(3)$ applied to $\mathcal{G}=G_x$ is equivalent to $x\in\Delta\setminus\partial\Delta$.

\begin{definition}\label{def_gammag}
Let $\Gc$ be a subgraph of $G$ such that $P_\Gc$ holds and let us denote $N$ the set of its nucleus vertices.   We define the set $\Gamma_\Gc$ such that $q\in\Gamma_\Gc$ if
\begin{enumerate}
\item $G_q=\Gc$ and consequently $q_{ij}=0\Leftrightarrow i\stackrel{\Gc}{\nsim} j$;
\item for any $i\in N$, $q_i=(ap)_i/(2\sum_{j\in N} (ap)_j)$ where, for any $j\in N$,  $(ap)_j=a_{jk}p_{jk}$ for some/any $k\simG j$;
\item for any $i\in N$,  $(q_{ij})_{j:j\simG i}$ is any collection of positive numbers such that $\sum_{j:j\simG i} q_{ij}=q_i$.
\end{enumerate}
In particular, $\Gamma_\Gc\subset\Delta\setminus\partial\Delta$ and $\Gamma_\Gc\neq \emptyset$.
\end{definition}

The following result is a consequence of Proposition~\ref{Prop:unstable} and Proposition \ref{prop:eqg}.

\begin{prop}
\label{Prop:unstable0}
Let $x\in\Delta\setminus\partial\Delta$. Then, the following assertions are equivalent:
\begin{enumerate}
\item[$(i)$] $x$ is a stable equilibrium;
\item[$(ii)$] $x$ is an equilibrium and its associated subgraph $G_x$ satisfies the property $P_{G_x}$;
\item[$(iii)$] $x\in \Gamma_{\Gc}$, for some subgraph $\Gc$ satisfying $P_\Gc$.
\end{enumerate}
\end{prop}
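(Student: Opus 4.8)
The plan is to treat the two equivalences $(ii)\Leftrightarrow(iii)$ and $(i)\Leftrightarrow(ii)$ separately, writing $b_{ij}:=a_{ij}p_{ij}$ throughout. The common starting point is the equilibrium equation: since $F(x)_{ij}=x_{ij}(b_{ij}x_{ij}/(x_ix_j)-H(x))$, a point $x$ is an equilibrium iff for every edge with $x_{ij}>0$ one has $b_{ij}x_{ij}/(x_ix_j)=H(x)$. Because $x\in\Delta$ forces $\sum_{b_{ij}>0}x_{ij}\ge h_1>0$, we have $H(x)>0$, so this relation also shows $x_{ij}>0\Rightarrow b_{ij}>0$; hence $G_x$ carries only positive-affinity edges. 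I would first record the reduction that, for an equilibrium $x\in\Delta\setminus\partial\Delta$, property $P_{G_x}$ is equivalent to its clause $(2)$ alone (star-shapedness): clause $(3)$ is exactly $x\notin\partial\Delta$ (as noted in the excerpt), and clause $(1)$ is automatic once $(2)$ holds, since for a satellite $k$ of a nucleus $c$ we have $x_k=x_{ck}$ and the equilibrium relation collapses to $b_{ck}/x_c=H(x)$, forcing all nucleus--satellite affinities in a component to equal $H(x)\,x_c>0$.

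For $(ii)\Leftrightarrow(iii)$ I would argue by direct computation on star components. Given an equilibrium with $P_{G_x}$, the identity $b_{ck}/x_c=H(x)$ gives $x_c=(ap)_c/H(x)$ for each nucleus $c$; summing vertex masses over a star gives $\sum_{v\in\mathcal C}x_v=2x_c$, so $1=\sum_i x_i=2\sum_{c\in N}x_c=2\sum_{c\in N}(ap)_c/H(x)$, whence $H(x)=2\sum_{c\in N}(ap)_c$ and $x_c=(ap)_c/(2\sum_{j\in N}(ap)_j)$. These are exactly conditions $(1)$--$(3)$ of Definition~\ref{def_gammag}, so $x\in\Gamma_{G_x}$. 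Conversely, any $q\in\Gamma_{\mathcal G}$ is checked edge by edge: the satellite computation gives $b_{ck}q_{ck}/(q_cq_k)=2\sum_{j\in N}(ap)_j$ for every edge; calling this constant $c_0$ we get $H(q)=\sum_e q_e\,c_0=c_0$, so $\pi_e=c_0=H(q)$ for all edges and $q$ is an equilibrium, with total mass $1$ and all active edges satisfying $b>0$, hence $q\in\Delta\setminus\partial\Delta$ with $G_q=\mathcal G$ satisfying $P_{\mathcal G}$.

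For $(i)\Leftrightarrow(ii)$ --- the heart of the matter --- I would linearise $F$ at an interior equilibrium, where $F$ is smooth (all $x_i>0$, so the only irregularity of $F$, located on $\partial\Delta$, is absent). Writing $\pi_{ij}=b_{ij}x_{ij}/(x_ix_j)$, one computes $\partial F_e/\partial x_g=(\partial_g x_e)(\pi_e-H)+x_e(\partial_g\pi_e-\partial_g H)$, whose first term vanishes at equilibrium; moreover a short calculation shows $\partial_g H=0$ for every active $g$ (equilibria are critical points of the Lyapunov function $H$), so the active block reduces to $A_{eg}=x_e\,\partial_g\pi_e=H\big[\delta_{eg}-x_e(\1[i\in g]/x_i+\1[j\in g]/x_j)\big]$. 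The inactive edges decouple: for $x_e=0$ one gets $\partial F_e/\partial x_g=-H\delta_{eg}$, so $DF$ is block-triangular with inactive block $-H\,\mathrm{Id}$ (eigenvalue $-H<0$), and stability is governed entirely by $A=H\,B$. Introducing the vertex perturbation $u_i=\sum_m v_{im}$ and $w_i=u_i/x_i$, the equation $Bv=\mu v$ splits into a \emph{flow} part ($u\equiv0$, forcing $\mu=1$, nonempty iff $G_x$ has a cycle, and automatically tangent since $\sum_e v_e=\tfrac12\sum_i u_i=0$) and, for $\mu\neq1$, the relation $\sum_m (x_{km}/x_k)w_m=-\mu\,w_k$. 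The latter identifies the remaining eigenvalues as $\mu=-\nu$, where $\nu$ runs over the spectrum of the random walk $L=(x_{km}/x_k)$ on $G_x$, which is reversible with respect to $(x_k)$ hence has real spectrum in $[-1,1]$; the Perron value $\nu=1$ yields only the stable cross-component directions $\mu=-1$ (its eigenvector $\mathbf 1$ is excluded by tangency, while all other $L$-eigenvectors are $(x_k)$-orthogonal to $\mathbf 1$ and do lift to $T\Delta$), and $\nu=-1$ does not lift.

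The main obstacle is the final, purely spectral step: I must show $\max\mathrm{Re}$ of these eigenvalues is $\le0$ precisely when every component of $G_x$ is a star. This amounts to the graph-spectral lemma that, for a forest, $L$ has no eigenvalue in the open interval $(-1,0)$ iff each component is a star (a star of degree $d$ gives $\mathrm{spec}(L)=\{1,0^{(d-1)},-1\}$, so the only lifted eigenvalues are the neutral $\mu=0$ directions --- exactly the tangents to $\Gamma_{G_x}$, consistent with $\max\mathrm{Re}=0$ --- whereas any tree containing a path on four vertices, or any component containing a cycle, produces either $\nu\in(-1,0)$ with $\mu=-\nu\in(0,1)$ or a flow eigenvalue $\mu=1$, hence a positive eigenvalue and instability). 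Establishing this clean dichotomy --- ruling out negative random-walk eigenvalues for stars while producing one for every non-star tree, and checking throughout that the candidate eigenvectors lie in $T\Delta$ --- is where the real work lies; I would prove it by explicit diagonalisation on star components together with a monotonicity/interlacing argument producing a sign-alternating Perron-type eigenvector on any path of length three.
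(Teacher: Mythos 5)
Your overall architecture is sound and your treatment of $(ii)\Leftrightarrow(iii)$ (the reduction of clause $(1)$ of $P_{G_x}$ to star-shapedness at an equilibrium, and the explicit computation of $H(q)=2\sum_{c\in N}(ap)_c$ and $q_c=(ap)_c/(2\sum_j(ap)_j)$) matches the paper's Proposition~\ref{prop:eqg}. For the hard equivalence $(i)\Leftrightarrow(ii)$, however, you take a genuinely different and more ambitious route than the paper: you diagonalise the active Jacobian block completely by reducing it to the spectrum of the reversible random walk $L=(x_{km}/x_k)$ on $G_x$, whereas the paper never computes the spectrum. The paper instead (Lemma~\ref{aij=aik}) writes each block as $J^m_x=D^mM^m$ with $D^m$ diagonal positive and $M^m$ symmetric, exhibits a two-edge test vector $u$ with $u^TM^mu>0$ whenever the balance condition fails, then shows (Lemma~\ref{Pdnh}) that under the balance condition a non-star component must have minimum degree two, and concludes instability there by the trace identity $\mathrm{Tr}(J^1_x)=H(x)(|E|-|V|)\ge0$ combined with the known eigenvalue $-H(x)<0$. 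That route needs no spectral graph theory at all; yours, if completed, yields strictly more information (the full spectrum), but as written it has two genuine gaps.

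First, your accounting of the $\mu=1$ eigenspace is wrong: the ``flow'' space is the kernel of the \emph{unsigned} vertex--edge incidence map $v\mapsto(\sum_{m\sim k}v_{km})_k$, whose dimension on a connected component is $|E|-|V|$ if the component is non-bipartite and $|E|-|V|+1$ if it is bipartite --- not the cycle space. For a triangle this kernel is trivial, so the claim ``nonempty iff $G_x$ has a cycle'' fails, and the promised eigenvalue $\mu=1$ does not exist there (one checks directly that the triangle block has spectrum $\{-H,H/2,H/2\}$); instability of odd unicyclic components must instead come from a negative eigenvalue of $L$, which does exist (trace zero, $-1$ excluded by non-bipartiteness) but which your dichotomy, phrased only for forests, does not cover. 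Second, and more importantly, the pivotal spectral lemma --- that a non-star tree component forces an eigenvalue of $L$ in $(-1,0)$ --- is asserted but not proved, and the proposed ``interlacing'' will not work off the shelf: Cauchy interlacing applies to principal submatrices, while passing to an induced subtree changes the degree normalisation of $L$. The lemma is true and can be completed, e.g.\ via the fact that the rank of the weighted adjacency matrix of a forest equals twice its matching number (no cancellation occurs in the determinant expansion on a forest), so a non-star tree has at least four nonzero eigenvalues of $D^{-1/2}AD^{-1/2}$, of which only two can be $\pm1$, and bipartite spectral symmetry then places one in $(-1,0)$; but until that step and the corrected cyclic case are supplied, the instability direction $(i)\Rightarrow(ii)$ is not established.
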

\begin{rem}
This last result implies that, if $P_{\mathcal{G}}$ holds for some subgraph $\Gc$, then the set of stable equilibria $x$ with $G_x=\Gc$ is a non-empty continuum .
\end{rem}

Theorem \ref{th_posprob} implies that any subgraph $\mathcal{G}$ of $G$ such that $P_\Gc$ holds, with positive probability $x_n$ converges to some stable equilibrium $x\in\Delta\setminus\partial\Delta$ with $G_x=\Gc$; and $\Gc$ is a limit graph in a strong sense, since after a (random) time communications will only occur within edges of this graph.
\begin{theo}\label{th_posprob}
Let $\Gc$ be a subgraph of $G$ such that $P_\Gc$ holds. Fix $q\in\Gamma_{\Gc}$, so that, in particular, $q\in\Delta\setminus\partial\Delta$, $G_q=\Gc$ and $q$ is a stable equilibrium for the ODE~\eqref{ode} associated to the evolution of $(x_n)$.\\
Let $\mathcal{N}(q)$ be a neighbourhood of $q$ in $\Gamma_{\Gc}$. Then, with positive probability,
\begin{enumerate}
 \item $x_n\to x\in\mathcal{N}(q)$, where $x$ is thus a stable equilibrium with $G_x=\Gc$.
\item $\forall i,j\in\V$, $V^\infty_{ij}=\infty\,\iff\,\{i,j\}$ is an edge of $\Gc$.
\end{enumerate}
\end{theo}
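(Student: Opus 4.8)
The plan is to establish both claims on a single event of positive probability, obtained by combining a positive-probability attraction statement for a \emph{restricted} dynamics—in which the off-$\Gc$ edges are never used—with a coupling argument showing that, conditionally, the true process never deviates from this restricted one. Throughout I fix a neighbourhood $U$ of $q$ in $\Delta$, small enough that $U$ contains no equilibria other than those of $\Gamma_\Gc$ (possible since $q\in\Delta\setminus\partial\De$ and, by Proposition~\ref{Prop:unstable0}, the equilibria near $q$ are exactly $\Gamma_\Gc$), that $\Gamma_\Gc\cap U\subseteq\mathcal{N}(q)$, and that on $U$ every $x_i$ is bounded away from $0$ while $x_{ij}<\e$ for each off-$\Gc$ edge $\{i,j\}$.

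I would begin with reachability. Because $q\in\Delta\setminus\partial\De$ and $p_{ij}>0$ on $\Gc$-edges, each admissible one-step outcome has positive conditional probability, and the influence of the $v^0_{ij}$ is washed out over a long horizon; hence one can steer the empirical measure by reinforcing the $\Gc$-edges in the proportions prescribed by $q$ while refraining from any off-$\Gc$ communication, so that a finite product of positive conditional probabilities brings the process into $U$ at some finite time $n_0$ with positive probability. Next, trapping along the restricted dynamics: consider the process in which off-$\Gc$ edges are frozen; it is a stochastic approximation for the ODE~\eqref{ode} restricted to configurations carried by $\Gc$, for which $q$ is a stable equilibrium and $\Gamma_\Gc$ the associated attracting manifold, and for which $H$ is, by Theorem~\ref{th_pot} and its proof, a local Lyapunov function whose maximum on $U$ is attained exactly on $\Gamma_\Gc$. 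Using the shadowing and variance estimates already developed for Theorems~\ref{th_pot}--\ref{th_eq}—the per-step increments are $O(1/T_n)$ and the conditional variances are summably controlled once the $x_i$ are bounded below on $U$—I would show that, started in $U$ at $n_0$, the restricted process stays in $U$ forever and converges to a point of $\Gamma_\Gc\cap U\subseteq\mathcal{N}(q)$ with conditional probability bounded below by a positive constant. The degeneracy of the Hessian of $H$ along $\Gamma_\Gc$ is harmless, since the neutral directions lie within $\Gamma_\Gc\subseteq U$; convergence to a single point follows because the split of the nucleus mass among its satellites evolves like a generalised P\'olya urn and hence converges.

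For the coupling and the identification of the limit graph, I would argue as follows. On the event that the restricted process is trapped in $U$ and converges to $\mathcal{N}(q)$, each off-$\Gc$ coordinate that is never reinforced satisfies $x_{ij}^n=V_{ij}^{n_0}/T_n=O(1/n)$ because $T_n$ grows linearly; hence $\sum_{n\ge n_0}p_{ij}(x_{ij}^n)^2/(x_i^nx_j^n)<\infty$, and the convergent infinite product $\prod_{n\ge n_0}\big(1-\sum_{\text{off }\{i,j\}}p_{ij}(x_{ij}^n)^2/(x_i^nx_j^n)\big)$ is strictly positive. Thus, conditionally, the true process coincides with the restricted one—no off-$\Gc$ edge fires after $n_0$—with positive probability, giving $V_{ij}^\infty<\infty$ for $\{i,j\}\notin\Gc$. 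Intersecting the reachability, trapping, and coupling events yields an event of positive probability on which $x_n\to x\in\mathcal{N}(q)$, proving $(1)$. For the forward direction of $(2)$, on this event $x_{ij}^n\to q_{ij}>0$ and $x_i^n,x_j^n$ stay bounded for $\{i,j\}\in\Gc$, so the conditional firing probabilities are bounded below and their sum diverges; the divergent form of the conditional Borel--Cantelli lemma then gives infinitely many communications and $V_{ij}^\infty=\infty$, noting $a_{ij}>0$ on $\Gc$-edges by condition $(1)$ of $P_\Gc$. This also confirms $G_x=\Gc$.

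The main obstacle is the self-referential coupling between trapping and the vanishing of the off-$\Gc$ coordinates: the confinement and variance estimates require the $x_i$ to stay bounded away from $0$, so that $F$ remains regular and the $O(1/T_n)$ bounds are uniform, while the decay $x_{ij}^n=O(1/n)$ that makes the firing series summable is itself a consequence of the off-edges never firing—precisely what one is trying to establish. I would break this circularity by running the trapping argument first for the restricted process, where the off-edges are frozen by construction so the decay is automatic, and only afterwards transfer to the true process through the convergent-product estimate above; the final point to verify is that the three conditioning events are mutually consistent, i.e.\ that the positive-probability trapping event and the events $\{V_{ij}^\infty<\infty\}$ reinforce rather than exclude one another, which is exactly what the coupling guarantees.
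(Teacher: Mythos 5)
Your overall architecture --- reach a neighbourhood of $q$ with positive probability, freeze the off-$\Gc$ edges via a convergent product of one-step non-firing probabilities, control the within-component ratios by a P\'olya-urn/martingale argument, and conclude part (2) from linear growth of $T_n$ --- matches the paper's proof, which implements your ``restricted process plus coupling'' idea through the stopping times $\tau_n^1\wedge\tau_n^2\wedge\tau_n^3$ and Lemmas \ref{claim4}--\ref{claim6}. However, there is a genuine gap in your trapping/identification step. You claim that $H$ is a local Lyapunov function whose maximum on $U$ is attained \emph{exactly} on $\Gamma_\Gc$, and that the neutral directions of its Hessian lie within $\Gamma_\Gc$. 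This is false. When $P_\Gc$ holds, each component $\mathcal{C}_m$ of $\Gc$ is a star with nucleus $v_m$ and constant $a_{ij}p_{ij}=(ap)_m$, so for any configuration $x$ carried by $\Gc$ one has $x_j=x_{v_m j}$ for every satellite $j$, hence $x_{v_m j}^2/(x_{v_m}x_j)=x_{v_m j}/x_{v_m}$ and
$$H(x)=\sum_m 2(ap)_m\,\frac{\sum_j x_{v_m j}}{x_{v_m}}=2\sum_m (ap)_m$$
identically on the whole $\Gc$-carried slice. As soon as $\Gc$ has at least two connected components this slice strictly contains $\Gamma_\Gc$: the relative masses $x_{v_m}$ are free on the slice but must equal $(ap)_m/(2\sum_{m'}(ap)_{m'})$ on $\Gamma_\Gc$. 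The unbalanced points lie in $\Lambda\setminus\Gamma$ (cf.\ Remark~\ref{rem1}), so maximising $H$ can neither confine the process near $\Gamma_\Gc$ nor produce the balance condition on the nucleus masses, and the ``neutral directions lie within $\Gamma_\Gc$'' assertion fails along the inter-component directions.

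The paper closes exactly this hole by other means: it keeps the nucleus payoffs growing linearly (Lemma \ref{claim6}) and the intra-component ratios $\alpha_i^k$ nearly constant (Lemma \ref{claim5}), so that $x_n$ stays away from $\partial\De$ and in a controlled region, and then invokes the a.s.\ convergence to the set of equilibria $\Gamma$ (Theorem~\ref{th_eq}, whose proof rests on the delicate Proposition \ref{prop:setofequ} rather than on the Lyapunov property alone) to force $x_{v_m}^n\to q_{v_m}$. If you replace your Lyapunov-maximum mechanism by an appeal to Theorem~\ref{th_eq} together with quantitative boundary-avoidance and ratio estimates of this type, your argument aligns with the paper's; as written, the step that pins down the limit --- and even the confinement to $U$ --- is unsupported.
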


Theorems~\ref{th_pot}, \ref{th_eq} and \ref{th_posprob}, and Proposition \ref{Prop:unstable0}, are generalisations of results of Hu, Skyrms and Tarr\`es \cite{HST}, in the case of the so-called signaling game, see Section~\ref{sect_intro}.\\

We believe but cannot prove that $x_n$ converges a.s.~to some stable equilibrium in $\Delta$. The difficulty arises from the fact that convergence towards $\partial\De$ is also possible. In Section \ref{sect_bord}, we explain how  several sites can a.s.~asymptotically fall out of use (i.e. $x_i^n\to0$) on a network, although all of them performs infinitely many successful communications. The behavior in the neighborhood of $\partial\De$ is difficult to analyse, since the functions $F$ and $H$ are not continuous on $\partial\De$, and since the error in the stochastic approximation can be irregular. 

The paper is organized as follows. In the following Section~\ref{sect_ode}, we explicit the ODE associated to the dynamics of $x_n$, and prove that $H$ is a Lyapunov function for the deterministic dynamics. Section~\ref{sect_p1} and Section~\ref{sect_p2} are respectively dedicated to the proofs of Theorem~\ref{th_pot} and Theorem~\ref{th_eq}. Section~\ref{sect_bord} yields some explanation on the technical difficulties arising from the possible convergence to $\partial\Delta$. Section~\ref{sect_eq} is devoted to the classification of equilibria and analysis of stability, and Section~\ref{sect_p3} concerns the proof of Theorem~\ref{th_posprob}.


\section{Stochastic Approximation}


Let us first compute the conditional increment of the process $(x_n)$ and derive some mean-field ODE. Using stochastic approximation techniques, we will then study this ODE (in particular its equilibria in Section~\ref{sect_eq}), following the idea that the random process approximates the solutions of the ODE.\\

In order to compute the increment of $(x_n)_{n\in\mathbb{N}}$ at
time $n$ we define, for all $i,j\in\V$, $i\sim j$, and for all $n\in\mathbb{N}$, the variables
\beq
\Delta_{ij}^{n+1}&:=& V_{ij}^{n+1}-V_{ij}^n,\label{def_del.ij}\\
\Delta_T^{n+1}&:=&T_{n+1}-T_n.\label{def_del.T}
\eeq

Now, for all $ij\in E$, we have
\begin{align}
\nonumber&x_{ij}^{n+1}-x_{ij}^n = \frac{V_{ij}^{n}+\Delta_{ij}^{n+1}}{T_n+\Delta_T^{n+1}}-\frac{V_{ij}^{n}}{T^{n}}=\frac{T_n\cdot\Delta_{ij}^{n+1}-V_{ij}^n\cdot\Delta_T^{n+1}}{T_n\left(T_n+\Delta_T^{n+1}\right)}\\
&=\frac{\Delta_{ij}^{n+1}-x_{ij}^n\cdot\Delta_T^{n+1}}{T_{n+1}}
\label{firstap}
=\frac{1}{T_n}\left(\Delta_{ij}^{n+1}-x_{ij}^n\cdot\Delta_T^{n+1}\right)+\widetilde{R}_{n+1}^{ij},
\end{align}
where we define $(\widetilde{R}_{n+1})=(\widetilde{R}_{n+1}^{ij})_{i,j\in\V}$ by
\beq\label{def_R-net}
\widetilde{R}_{n+1}^{ij}:=\left(\Delta_{ij}^{n+1}-x_{ij}^n\cdot\Delta_T^{n+1}\right)\left(\frac{1}{T_{n+1}}-\frac{1}{T_{n}}\right).
\eeq
$\widetilde{R}_{n}^{ij}$ is the increment of a bounded and almost surely converging process: indeed, $0\le\Delta_{ij}^{n+1}\le a_{ij}$,  $0\le\Delta_{T}^{n+1}\le \sum_{k,  l\in\V} a_{kl}$, so that, for all $k\in\N$,  
\[
\label{ubr}
\sum_{n\ge k,\, i, j\in\V} \left|\widetilde{R}_{n+1}^{ij}\right| \le 2\sum_{n\ge k,\, i, j\in\V}a_{kl}\left(\frac{1}{T_{n}}-\frac{1}{T_{n+1}}\right)
\le\frac{2\sum_{k,l\in\V}a_{kl}}{T_k}.
\]

Taking the conditional expectation of \eqref{firstap} and using \eqref{proba_succes} and \eqref{incr_T}, we have
\[
\label{stoch_app}
\E(x_{ij}^{n+1}-x_{ij}^n|\mathcal{F}_n)=\frac{x_{ij}^n}{T_n}\big(a_{ij}p_{ij}\frac{x^n_{ij}}{x_i^nx_j^n} - H(x_n)\big) + R_n^{ij}
=\frac{F(x_n)_{i,j}}{T_n}+ R_n^{ij},
\]
where $R_n^{ij}:=\E\left(\left.\widetilde{R}_{n+1}^{ij}\right|\mathcal{F}_n\right)$, 
and  $F$ is defined in \eqref{deff}.
Note that $\sum_n |R_n^{ij}|<\infty$ a.s.,  using \eqref{ubr} and a generalised version of Conditional Borel-Cantelli Lemma, see \cite[Lemma 2.7.33]{DCD2}.

The following Lemma \ref{lem_StochApp} can be deduced from the estimates above; we use the $L^1$ Euclidean norm $|\cdot|$ on $\mathbb{R}^{|\V|\times|\V|}$.

\begin{lem}\label{lem_StochApp}
 There exists an adapted martingale increment process $(\eta_n)_{n\in \mathbb{N}}$ such that, for all $n\in N$,
\begin{align}\label{Stochapprox}x_{n+1}-x_n=\frac{F(x_n)}{T_n}+\eta_{n+1}+\widetilde{R}_{n+1},\end{align}
and, for all $k\in\N$, 
\begin{equation}
\label{uber}
|\eta_{n+1}|\le2\frac{\sum_{i,j\in\V}a_{ij}}{T_n},\,\,
\sum_{n\ge k}|\widetilde{R}_{n+1}|\le\frac{2\sum_{k,l\in\V}a_{kl}}{T_k}.
\end{equation}
\end{lem}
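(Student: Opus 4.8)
The plan is to read the decomposition off the computations already performed and to identify the martingale increment as the centered part of the first-order term in \eqref{firstap}. Writing $A_{n+1}^{ij}:=T_n^{-1}\big(\Delta_{ij}^{n+1}-x_{ij}^n\,\Delta_T^{n+1}\big)$, equation \eqref{firstap} reads $x_{ij}^{n+1}-x_{ij}^n=A_{n+1}^{ij}+\widetilde{R}_{n+1}^{ij}$, so I would simply set
\[
\eta_{n+1}^{ij}:=A_{n+1}^{ij}-\E\big(A_{n+1}^{ij}\,\big|\,\mathcal{F}_n\big),\qquad \eta_{n+1}:=(\eta_{n+1}^{ij})_{i,j\in\V}.
\]
By construction $\eta_{n+1}$ is $\mathcal{F}_{n+1}$-measurable and $\E(\eta_{n+1}^{ij}\mid\mathcal{F}_n)=0$, hence $(\eta_n)$ is an adapted martingale increment process. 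Evaluating $\E(\Delta_{ij}^{n+1}\mid\mathcal{F}_n)=a_{ij}p_{ij}(x_{ij}^n)^2/(x_i^nx_j^n)$ from \eqref{proba_succes} and $\E(\Delta_T^{n+1}\mid\mathcal{F}_n)=H(x_n)$ from \eqref{incr_T}, one gets $\E(A_{n+1}^{ij}\mid\mathcal{F}_n)=F(x_n)_{ij}/T_n$; substituting $A_{n+1}^{ij}=\eta_{n+1}^{ij}+F(x_n)_{ij}/T_n$ into the identity above yields exactly \eqref{Stochapprox}.

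For the remainder bound in \eqref{uber} there is nothing new to prove: it is precisely the estimate for $\sum_{n\ge k,\,i,j}|\widetilde{R}_{n+1}^{ij}|$ already obtained from the definition \eqref{def_R-net}, using $0\le\Delta_{ij}^{n+1}\le a_{ij}$ and $0\le\Delta_T^{n+1}\le\sum_{k,l}a_{kl}$ together with the telescoping of $1/T_{n}-1/T_{n+1}$. The only point requiring genuine care is the martingale bound with the stated constant $2$. Here I would avoid the crude route of bounding $|A_{n+1}|$ and $|\E(A_{n+1}\mid\mathcal{F}_n)|$ separately, which loses a factor $2$, and instead center the two pieces $\Delta_{ij}^{n+1}$ and $\Delta_T^{n+1}$ individually. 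Setting $D^{ij}:=\Delta_{ij}^{n+1}-\E(\Delta_{ij}^{n+1}\mid\mathcal{F}_n)$ and using $\Delta_T^{n+1}=\sum_{k,l}\Delta_{kl}^{n+1}$ gives
\[
\eta_{n+1}^{ij}=\frac{1}{T_n}\Big(D^{ij}-x_{ij}^n\sum_{k,l}D^{kl}\Big).
\]

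Since $\Delta_{ij}^{n+1}\in\{0,a_{ij}\}$ a bounded increment stays within its range after centering, so $|D^{ij}|\le a_{ij}$; combined with $\sum_{i,j}x_{ij}^n=1$ and $\big|\sum_{k,l}D^{kl}\big|\le\sum_{k,l}|D^{kl}|$, the $L^1$ norm satisfies
\[
|\eta_{n+1}|=\sum_{i,j}|\eta_{n+1}^{ij}|\le\frac{1}{T_n}\Big(\sum_{i,j}|D^{ij}|+\Big|\sum_{k,l}D^{kl}\Big|\Big)\le\frac{2}{T_n}\sum_{i,j}|D^{ij}|\le\frac{2\sum_{i,j}a_{ij}}{T_n},
\]
which is the asserted bound. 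I expect no real obstacle beyond this bookkeeping, since the decomposition and the remainder estimate are already in hand; the one subtlety to flag is that the sharp constant $2$ (rather than $4$) hinges on centering $\Delta_{ij}^{n+1}$ and $\Delta_T^{n+1}$ separately and on exploiting $\sum_{i,j}x_{ij}^n=1$, so that the two triangle inequalities above each cost only one copy of $\sum_{i,j}|D^{ij}|$.
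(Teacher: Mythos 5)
Your proposal is correct and follows essentially the same route as the paper: $\eta_{n+1}$ is the centered first-order term of \eqref{firstap}, the drift identity $\E(A_{n+1}\mid\F_n)=F(x_n)/T_n$ is read off \eqref{proba_succes} and \eqref{incr_T}, and the bound on $\sum_{n\ge k}|\widetilde{R}_{n+1}|$ is exactly the estimate \eqref{ubr} already in hand. The only divergence is in the one-line derivation of the constant $2$ for $|\eta_{n+1}|$ --- the paper bounds $|x_{n+1}-x_n-\widetilde{R}_{n+1}|$ and its conditional expectation separately, while you center $\Delta_{ij}^{n+1}$ and $\Delta_T^{n+1}$ individually; your variant is in fact the cleaner way to get the stated constant in the $L^1$ norm, though the precise constant is immaterial for the rest of the argument.
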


\begin{proof}
Note that 
\begin{equation}
\label{eup}
|x_{n+1}-x_n-\widetilde{R}_{n+1}|=\left|\frac{1}{T_n}\left(\Delta_{ij}^{n+1}-x_{ij}^n\cdot\Delta_T^{n+1}\right)\right|
\le\frac{\sum_{i,j\in\V}a_{ij}}{T_n},
\end{equation}
which provides the upper bound on $|\eta_{n+1}|$; the upper bound on $(R_n)_{n\in\N}$ is given by \eqref{ubr}. 
\end{proof}

Corollary~\ref{cor_Tn}, which proves asymptotic linear growth of $T_n$, will in particular imply that the martingale
$(\sum_{k=1}^n\eta_{k})_{n\in\mathbb{N}}$ converges a.s.~by Doob's convergence theorem.

Equation ($\ref{Stochapprox}$) is a stochastic approximation of $(x_n)$. We will show in Corollary~\ref{cor_Tn} that $1/T_n$ is the step size and is of the order of $1/n$. Therefore it is reasonable to expect that $(x_n)$ converges to the set of equilibria of \eqref{ode}, which we show in Theorem \ref{th_eq}. 

We first study the evolution of the deterministic process driven by~\eqref{ode}. Recall the definition \eqref{def_Gamma} of $\Gamma$, the \emph{set of
equilibria}
 of the ODE~\eqref{ode}.


\subsection{Analysis of the mean-field ODE}\label{sect_ode}


In this section we show that the function $H$ is a Lyapunov function for the ODE~\eqref{ode}, i.e. that $H$ is nondecreasing along the paths of the ODE. It is natural to expect that result, in the sense that the overall expected payoff of the network should indeed increase in average; see \cite{HST} for more details.

We will then deduce that the random process $H(x_n)$ is a submartingale up to an error term (see Theorem~\ref{th_pot}). Recall that this stochastic result does not directly follow from the deterministic statements and classical results, since $H$ is not continuous on the boundary $\partial\Delta$.\\

Let, for all $x\in\De$,
\beq\label{def_p}
p(x):=\sum_{i,j,k\in \V:x_{ij},x_{ik}>0}\frac{x_{ij}x_{ik}}{x_i}\left(y_{ij}-y_{ik}\right)^2,
\eeq
where, given  $x\in\Delta\setminus\partial\Delta$ we let, for all $i,j\in\V$ such that $i\sim j$,
\beq\label{def_y}
y_{ij}:=a_{ij}p_{ij}\frac{x_{ij}}{x_ix_j}
\eeq
be the weighted \emph{efficiency}  of the pair $ij$.

\begin{prop}\label{prop_Lya}
$H$ is a Lyapunov function on
$\Delta\setminus\partial\Delta$ for the mean-field ODE~\eqref{ode}, more precisely,
\begin{equation}
\label{eq:varh1}
\nabla H \cdot F \,(x)\, =\, p(x)\ge0,
\end{equation}
where $\nabla H=(\partial H/\partial x_{ij})_{i,j\in\V}$.
\end{prop}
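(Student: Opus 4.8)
The plan is to verify the identity \eqref{eq:varh1} by computing the gradient $\nabla H$ and pairing it with $F$, exploiting two structural features: the degree‑zero homogeneity of $H$, and the fact that the resulting quadratic form in the efficiencies $y_{ij}$ is a (vertex‑weighted) sum of variances, hence nonnegative. Throughout I would work on $\Delta\setminus\partial\Delta$, where for every pair $i\sim j$ with $a_{ij}p_{ij}>0$ one has $x_i\ge x_{ij}$ and $x_j\ge x_{ij}$, both positive whenever $x_{ij}>0$. Since each summand $a_{ij}p_{ij}x_{ij}^2/(x_ix_j)$ vanishes to second order as $x_{ij}\to0$, the function $H$ is continuously differentiable there even at points with some $x_{ij}=0$, so the chain rule applies.

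First I would record that $H$ is homogeneous of degree $0$: replacing $x$ by $\lambda x$ leaves each ratio $x_{ij}^2/(x_ix_j)$ unchanged. Euler's relation then gives $\sum_{i,j}x_{ij}\,\partial H/\partial x_{ij}=0$. Writing $F(x)_{ij}=x_{ij}(y_{ij}-H(x))$, the contribution of the $-H(x)$ part to the pairing is $-H(x)\sum_{i,j}x_{ij}\,\partial H/\partial x_{ij}=0$, so that
\[
\nabla H\cdot F=\sum_{i,j}x_{ij}\,y_{ij}\,\frac{\partial H}{\partial x_{ij}}.
\]

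Next I would compute $\partial H/\partial x_{ij}$ by the chain rule, carefully tracking the dependence of the vertex masses $x_i=\sum_k x_{ik}$ on the edge variable and the symmetry $x_{ij}=x_{ji}$. Introducing the mean efficiency at a vertex, $\bar y_i:=x_i^{-1}\sum_{k\sim i}x_{ik}y_{ik}$, this yields
\[
\frac{\partial H}{\partial x_{ij}}=2y_{ij}-\bar y_i-\bar y_j .
\]
Substituting, and using $\sum_j x_{ij}y_{ij}=x_i\bar y_i$ together with the symmetry $i\leftrightarrow j$, the cross terms collapse to $\sum_i x_i\bar y_i^{\,2}$, leaving
\[
\nabla H\cdot F=2\sum_i\Big(\sum_j x_{ij}y_{ij}^2-x_i\bar y_i^{\,2}\Big)=2\sum_i x_i\,\mathrm{Var}_i(y),
\]
where $\mathrm{Var}_i(y)$ is the variance of $(y_{ij})_{j\sim i}$ under the probability weights $x_{ij}/x_i$. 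Finally I would rewrite each variance as a sum of squares via $\mathrm{Var}_i(y)=\tfrac12\sum_{j,k}\tfrac{x_{ij}x_{ik}}{x_i^2}(y_{ij}-y_{ik})^2$, turning the last display into exactly $p(x)=\sum_{i,j,k}\tfrac{x_{ij}x_{ik}}{x_i}(y_{ij}-y_{ik})^2$, which is manifestly $\ge0$.

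The main obstacle I anticipate is not conceptual but combinatorial bookkeeping: one must handle the symmetry $x_{ij}=x_{ji}$ consistently, both when differentiating the numerator $x_{ij}^2$ and in the endpoint sums defining $x_i$ and $x_j$, so that the constants appearing in $\partial H/\partial x_{ij}$, in the pairing $\nabla H\cdot F$, and in the triple sum $p(x)$ match and produce precisely the stated equality rather than a constant multiple of it. A secondary point requiring care is the justification of differentiability of $H$ on $\Delta\setminus\partial\Delta$ at points where some $x_{ij}=0$, which is exactly what legitimizes taking $\nabla H$ in the first place and is where the hypothesis $x\notin\partial\Delta$ (guaranteeing positive denominators) is used.
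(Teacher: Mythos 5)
Your argument is correct, but it is organized genuinely differently from the paper's proof, which is a brute-force expansion of $\nabla H\cdot F$ followed by index symmetrization. You instead (i) invoke the degree-zero homogeneity of $H$ and Euler's relation to kill the $-H(x)\,x_{ij}$ part of $F$ at the outset — the paper obtains the same cancellation inline via $\sum_{k}x_{ik}=x_i$; (ii) compute $\nabla H$ in closed form in terms of the efficiencies $y_{ij}$; and (iii) recognize the remaining quadratic form as $2\sum_i x_i\,\mathrm{Var}_i(y)=2\sum_{i,j}x_{ij}\bigl(y_{ij}-N_i(x)\bigr)^2$ before converting to the pairwise form $p(x)$. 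That intermediate expression is precisely the content of the paper's Lemma~\ref{rec-dl}, which the paper proves separately \emph{after} the proposition, so your route establishes both statements at once and makes the nonnegativity transparent; what it costs is exactly the bookkeeping you flag. On that point, be aware that the paper's convention (stated explicitly in its proof) treats $x_{ij}$ and $x_{ji}$ as \emph{independent} coordinates and indexes $F$ and $p$ by ordered pairs, under which the gradient is $\partial H/\partial x_{ij}=2\bigl(y_{ij}-N_i(x)\bigr)$ rather than your $2y_{ij}-\bar y_i-\bar y_j$ (which corresponds to summing $H$ over unordered edges with one symmetric variable per edge); either convention yields $\nabla H\cdot F=p(x)$ when carried through consistently, so this is a notational mismatch to resolve, not a gap. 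Your justification of differentiability on $\Delta\setminus\partial\Delta$ is also sound, since every term actually present in $H$ has $a_{ij}p_{ij}>0$ and hence $x_i,x_j>0$ there.
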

\noindent\textbf{Remark.} \textnormal{ $H$ is not a strict
Lyapunov function, i.e. $\nabla H\cdot F$ can vanish outside $\Gamma$. 
}

\begin{proof}
Note that, in the definition of $\nabla H$, $x_{ij}$ and $x_{ji}$ are independent variables. For any $x\in\Delta\setminus\partial\Delta$, we have

\beqq
\nabla H\cdot F(x)&=&  \sum_{i,j\in\V} a_{ij}p_{ij}\left\{ 2 \frac{x_{ij}^2}{x_ix_j}\left(a_{ij}p_{ij}\frac{x_{ij}}{x_ix_j}-H(x)\right)\right.\\
&&-\frac{x_{ij}^2}{x_i^2x_j}\left(\sum_{k\sim i}a_{ik} p_{ik}\frac{x_{ik}^2}{x_ix_k}-x_{ik}H(x)\right)\\
&&\left. -\frac{x_{ij}^2}{x_ix_j^2}\left(\sum_{l\sim j} a_{jl}p_{jl} \frac{x_{jl}^2}{x_jx_l}-x_{jl}H(x)\right)\right\}\\
&=& 2\sum_{i,j\in\V} a_{ij}^2p_{ij}^2 \frac{x_{ij}^3}{(x_ix_j)^2}\\
&&-\sum_{i,j,k\in\V} a_{ij}p_{ij}\frac{x_{ij}^2}{x_i^2x_j}a_{ik} p_{ik}\frac{x_{ik}^2}{x_ix_k}-\sum_{i,j,l\in\V} a_{ij}p_{ij}\frac{x_{ij}^2}{x_ix_j^2}a_{jl} p_{jl}\frac{x_{jl}^2}{x_jx_l}\\
&&-H(x)\times\sum_{i,j\in\V} a_{ij}p_{ij}\frac{x_{ij}^2}{x_ix_j}\left(2-\frac{1}{x_i}\sum_{k\sim i}x_{ik}-\frac{1}{x_j}\sum_{l\sim j}x_{jl}\right)\\
&=& 2\sum_{i,j\in\V} a_{ij}^2p_{ij}^2 \frac{x_{ij}^3}{(x_ix_j)^2} - 2\sum_{i,j,k\in\V} a_{ij}p_{ij}a_{ik}p_{ik}\frac{x_{ij}^2x_{ik}^2}{x_i^3x_jx_k}\\
&=& \sum_{i,j,k\in\V} a_{ij}^2p_{ij}^2 \frac{x_{ij}^3x_{ik}}{x_i^3x_j^2}+ \sum_{i,j,k\in\V} a_{ik}^2p_{ik}^2 \frac{x_{ik}^3x_{ij}}{x_i^3x_k^2}\\
&&-  \sum_{i,j,k\in\V} 2a_{ij}p_{ij}a_{ik}p_{ik}\frac{x_{ij}^2x_{ik}^2}{x_i^3x_jx_k}\\
&=&  \sum_{i,j,k\in\V}\frac{x_{ij}x_{ik}}{x_i}\bigg[a_{ij}p_{ij}\frac{x_{ij}}{x_ix_j}-a_{ik}p_{ik}\frac{x_{ik}}{x_ix_k}\bigg]^2\\
&=&p(x)\ge 0.
\eeqq

We used, in the third equality, that $\sum_{k\sim i}x_{ik}=x_i$ (and the same equality with $j$ instead of $i$), and we also used relevant substitutions and permutations of the indices. In the last equality, we use definition~\eqref{def_y} of $y_{ij}$.

\end{proof}

Given  $x\in\Delta\setminus\partial\Delta$, and $i,j\in\V$ such that $i\sim j$, let
\beq\label{def_N}
N_i(x):=\sum_{k\in\V}\frac{x_{ik}}{x_i}\cdot y_{ik}
\eeq
be the \emph{weighted efficiency} $N_i(x)$ of $i$. Recall that $y_{ik}$ is defined in \eqref{def_y}.

\begin{lem} \label{rec-dl}
For any $x\in\Delta\setminus\partial\Delta$:
\beq
p(x)=\nabla H\cdot F(x)=2\sum_{i,j\in\V} x_{ij}\Big(y_{ij}-N_i(x)\Big)^2.\label{Jz1}
\eeq
\end{lem}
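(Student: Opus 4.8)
The plan is to start from the identity already established in Proposition~\ref{prop_Lya}, namely
$$
\nabla H\cdot F(x)=p(x)=\sum_{i,j,k\in\V}\frac{x_{ij}x_{ik}}{x_i}\big(y_{ij}-y_{ik}\big)^2,
$$
so that the first equality in \eqref{Jz1} requires nothing new: the entire content of the lemma is the purely algebraic reorganisation of this triple sum into the claimed sum of squared deviations from $N_i(x)$. Since $x\in\Delta\setminus\partial\Delta$, every vertex satisfies $x_i\ge\sum_{j:a_{ij}p_{ij}>0}x_{ij}>0$, so dividing by $x_i$ is legitimate, and any term with $x_{ij}=0$ contributes $0$, which lets me ignore the fact that $y_{ij}$ is only defined for $i\sim j$.

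First I would fix a vertex $i$ and treat the inner double sum over $j,k$ on its own. The key observation is that, for fixed $i$, the numbers $w_{ij}:=x_{ij}/x_i$ form a probability weight on $j\in\V$, because $\sum_{j}x_{ij}=x_i$; moreover $N_i(x)=\sum_j w_{ij}\,y_{ij}$ is exactly the $w$-weighted mean of the efficiencies $y_{ij}$. The inner sum then reads $x_i\sum_{j,k}w_{ij}w_{ik}(y_{ij}-y_{ik})^2$.

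The one genuine computation is the elementary identity, valid for any probability vector $(w_j)_j$ and reals $(y_j)_j$ with mean $\mu=\sum_j w_jy_j$,
$$
\sum_{j,k}w_jw_k\,(y_j-y_k)^2=2\sum_j w_j\,(y_j-\mu)^2,
$$
which I would verify by expanding $(y_j-y_k)^2=y_j^2-2y_jy_k+y_k^2$ and using $\sum_j w_j=1$ together with $\sum_{j,k}w_jw_ky_jy_k=\mu^2$; both sides equal twice the variance of $y$ under $w$. Applying this with $w_{ij}$ and $\mu=N_i(x)$ turns the inner sum into $2x_i\sum_j w_{ij}(y_{ij}-N_i(x))^2=2\sum_j x_{ij}(y_{ij}-N_i(x))^2$, and summing over $i$ produces exactly the right-hand side of \eqref{Jz1}.

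There is no real obstacle here: the statement is a deterministic algebraic identity, and the only points requiring a word of care are that $x_i>0$ off the boundary (so the weights $w_{ij}$ are well defined) and that vanishing weights render the undefined values of $y_{ij}$ harmless. Conceptually, the lemma is best read as the assertion that $\nabla H\cdot F(x)$ is, up to the factor $2$, the $x_i$-weighted total variance of the pair efficiencies $y_{ij}$ around the vertex averages $N_i(x)$; this reformulation is what makes the equilibrium condition $\nabla H\cdot F(x)=0$ transparent, since it forces $y_{ij}=N_i(x)$ on every used edge.
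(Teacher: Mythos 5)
Your proposal is correct and takes essentially the same route as the paper: both proofs recognize $x_{ik}/x_i$ as a probability weight on the neighbours of $i$, identify $N_i(x)$ as the corresponding weighted mean of the $y_{ik}$, and reduce the double sum over $j,k$ to twice a variance (the paper phrases this through a random variable $Y$ with $\mathbb{P}_i(Y=y_{ik})=x_{ik}/x_i$ and a bias--variance decomposition followed by a relabelling of $j$ and $k$, which is the same identity you verify by direct expansion). No gaps.
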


\noindent\textbf{Remark.}\textnormal{
In the context of communication
systems, the above two formulas \eqref{eq:varh1}, \eqref{Jz1} mean that  the growth rate
of the expected payoff is a function depending on the difference
between efficiencies of different strategy pairs.}

\begin{proof}[Proof of Lemma~\ref{rec-dl}]
Fix $i\in\V$, and define a  measure $\mathbb{P}_i$ and a random variable $Y$ such that, for all $k\in\V$, $\mathbb{P}_i(Y=y_{ik}):=\frac{x_{ik}}{x_i}$. We denote $\E_i$ the expectation associated with $\mathbb{P}_i$.
Recalling~\eqref{def_N}, we have
$$\E_i(Y)= N_i(x).$$
This implies that, for all $j\sim i$,
\beqq
\E_i\Big[\big(y_{ij}-Y\big)^2\Big]&=& \Big(y_{ij}-N_i(x)\Big)^2+ \E_i\Big[\big(Y-N_i(x)\big)^2\Big]\\
&=& \Big( y_{ij}-N_i(x)\Big)^2+ \sum_k \frac{x_{ik}}{x_i} \Big(N_i(x)-y_{ik}\Big)^2.
\eeqq
On the other hand,
\beqq
\E_i\Big[\big(y_{ij}-Y\big)^2\Big]&=&\sum_{k}\frac{x_{ik}}{x_i}\left(y_{ij}-y_{ik}\right)^2.
\eeqq

Therefore, for any $x\in\Delta\setminus\partial\Delta$,
\beqq
\nabla H\cdot F(x)&=& \sum_{i,j,k\in\V}\frac{x_{ij}x_{ik}}{x_i}\bigg[y_{ij}-y_{ik}\bigg]^2
=\sum_{i,j\in\V}x_{ij}\sum_k\frac{x_{ik}}{x_i}\bigg[y_{ij}-y_{ik}\bigg]^2\\
&=&\sum_{i,j\in\V} x_{ij} \Big( y_{ij}-N_i(x)\Big)^2+ \sum_{i,j,k\in\V} \frac{x_{ij}x_{ik}}{x_i} \Big(N_i(x)-y_{ik}\Big)^2\\
&=&2 \sum_{i,j\in\V} x_{ij} \Big( y_{ij}-N_i(x)\Big)^2,
\eeqq
switching labels of $k$ and $j$ on the last sum of the penultimate line.
\end{proof}

%
%

Let us
define
\beq\label{def_Lam}
\Lambda:=\{x\in\Delta:\,p(x)=0\},
\eeq
where $p$, defined in~\eqref{def_p}, is the derivative of $H$ along a trajectory of the ODE~\eqref{ode}. The proof of the following Lemma \ref{restpoint} is straightforward.
\begin{lem}\label{restpoint}
$x \in \Lambda$ if and only if 
$$y_{ij}=y_{ik}, \textrm{ for all } i,j,k\in\V \text{ s.t. } x_{ij}\ne 0, x_{ik}
\ne 0.
$$
\end{lem}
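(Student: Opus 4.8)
The plan is to exploit the manifestly nonnegative form of $p(x)$. Recall from \eqref{def_p} that
\[
p(x)=\sum_{i,j,k\in \V:\,x_{ij},x_{ik}>0}\frac{x_{ij}x_{ik}}{x_i}\left(y_{ij}-y_{ik}\right)^2,
\]
a finite sum each of whose summands is the product of the coefficient $x_{ij}x_{ik}/x_i$ and the square $(y_{ij}-y_{ik})^2$. The whole argument reduces to the elementary fact that a finite sum of nonnegative reals vanishes if and only if every term vanishes, together with the observation that the coefficients are strictly positive on the range of summation.

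First I would check that every quantity appearing in a summand is well defined and that the coefficients are strictly positive there. If $x_{ij}>0$, then $x_i=\sum_{k\in\V}x_{ik}\ge x_{ij}>0$ and, by the symmetry $x_{ij}=x_{ji}$, also $x_j\ge x_{ij}>0$; hence the efficiency $y_{ij}=a_{ij}p_{ij}x_{ij}/(x_ix_j)$ from \eqref{def_y} is well defined even when $x\in\partial\De$. This is precisely why $p$ makes sense on all of $\De$ and not only on $\De\setminus\partial\De$. Likewise, for any triple $(i,j,k)$ in the index set we have $x_{ij},x_{ik}>0$ and $x_i>0$, so the coefficient $x_{ij}x_{ik}/x_i$ is strictly positive.

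The lemma then follows immediately. Each summand being nonnegative, the statement $p(x)=0$ is equivalent to $\frac{x_{ij}x_{ik}}{x_i}(y_{ij}-y_{ik})^2=0$ for every triple $(i,j,k)$ with $x_{ij},x_{ik}>0$. Since the coefficient is strictly positive on this range, this is in turn equivalent to $(y_{ij}-y_{ik})^2=0$, i.e.\ to $y_{ij}=y_{ik}$, for all $i,j,k\in\V$ such that $x_{ij}\ne0$ and $x_{ik}\ne0$, which is exactly the asserted characterization of $\Lambda$.

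There is essentially no obstacle here: the only points requiring care are bookkeeping ones, namely matching the index set of the sum in \eqref{def_p} precisely to the quantifier ``$x_{ij}\ne0$, $x_{ik}\ne0$'' in the statement, and confirming that $y_{ij}$ is well defined (hence each summand meaningful) on the closed simplex including $\partial\De$. Both are immediate from the observations above, which accounts for the author's remark that the proof is straightforward.
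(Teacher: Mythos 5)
Your proof is correct and is exactly the ``straightforward'' argument the paper has in mind (the paper omits it): $p(x)$ is a finite sum of nonnegative terms with strictly positive coefficients on its index set, so it vanishes iff every square $(y_{ij}-y_{ik})^2$ does. Your extra check that $x_{ij}>0$ forces $x_i,x_j>0$, so each $y_{ij}$ appearing in the sum is well defined even on $\partial\De$, is a worthwhile bookkeeping point that the paper leaves implicit.
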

\begin{rem}\label{rem1} 
$x\in\Lambda$ iff the weighted efficiencies on edges $y_e$  are constant over the connected components of $(G_x,\simx)$. Now $x\in\G$ iff $y_e$ is equal to $H(x)$ for any edge $e$ of $G_x$. Therefore $\Gamma\subseteq\Lambda$ but $\Lambda\neq\Gamma$ in general, and $H$ is not a strict Lyapounov function. 
\end{rem}


\subsection{Proof of Theorem~\ref{th_pot} and asymptotic linear growth of $(T_n)$}\label{sect_p1}


\begin{proof}[Proof of Theorem~\ref{th_pot}]
In order to compute the conditional expectation of the increment of $(H(x_n))$, let us first estimate the increment
$$\xi_{ij}^{n+1}=\frac{(V_{ij}^{n+1})^2}{V_i^{n+1}V_j^{n+1}}-\frac{(V_{ij}^{n})^2}{V_i^{n}V_j^{n}}.$$
By Taylor expansion, 
$$\frac{(1+a)^2}{(1+b)(1+c)}=1+2a-b-c+O(a^2+b^2+c^2)$$
for all $a$, $b$, $c$ $>0$.

We use notation $\De_{ij}^{n+1}$ from \eqref{def_del.ij}, and also define $\Delta_{i}^{n+1}:=\sum_j \Delta_{ij}^{n+1}$. 
We have
\begin{align*}
\xi_{ij}^{n+1}&=\frac{(V_{ij}^{n})^2}{V_i^{n}V_j^{n}}\left[
\frac{(1+\De_{ij}^{n+1}/V_{ij}^n)^2}{(1+\De_{i}^{n+1}/V_i^n)(1+\De_{j}^{n+1}/V_j^n)}-1\right]\\
&=\frac{(V_{ij}^{n})^2}{V_i^{n}V_j^{n}}\left(2\frac{\De_{ij}^{n+1}}{V_{ij}^n}-\frac{\De_{i}^{n+1}}{V_i^n}
-\frac{\De_{j}^{n+1}}{V_j^n}+\e_{n+1}^{ij}\right)\\
&=\frac{2V_{ij}^n\De_{ij}^{n+1}}{V_i^nV_j^n}-\frac{(V_{ij}^n)^2\De_i^{n+1}}{(V_i^n)^2V_j^n}-\frac{(V_{ij}^n)^2\De_j^{n+1}}{V_i^n(V_j^n)^2}+\zeta_{n+1}^{ij},
\end{align*}
where $|\e_{n+1}^{ij}|$ and  $|\zeta_{n+1}^{ij}|$ are upper bounded by
\begin{equation}
C\left(\left(\frac{\De_{ij}^{n+1}}{V_{ij}^{n}}\right)^2+\left(\frac{\De_{i}^{n+1}}{V_{i}^{n}}\right)^2
+\left(\frac{\De_{j}^{n+1}}{V_{j}^{n}}\right)^2\right),
\end{equation}
for some universal constant $C>0$.

Note that, since the harmonic series converges we have, for all $i$, $j$ $\in\V$,
$$\sum_{n\in\N}|\zeta_{n+1}^{ij}|<\infty.$$

Let $\zeta_{n+1}=\sum_{i,j\in\V}a_{ij}p_{ij}\zeta_{n+1}^{ij}$.

On the other hand, summing our estimate over $i$, $j$, we deduce that
\beqq
H(x_{n+1})&-&H(x_n)-\zeta_{n+1}\\
&=& 2\sum_{i,j\in\V}a_{ij}p_{ij}\frac{V_{ij}^n\Delta_{ij}^{n+1} }{V_i^nV_j^n}       -2\sum_{i,j\in\V}a_{ij}p_{ij}\frac{(V_{ij}^n)^2\Delta_{i}^{n+1}}{ (V_i^n)^2V_j^n    }\\
&=&\sum_{i,j,k\in\V}a_{ij}p_{ij}\frac{V_{ij}^n V_{ik}^n}{(V_i^n)^2V_j^n}  \Delta_{ij}^{n+1}    +\sum_{i,j,k\in\V}a_{ik}p_{ik}\frac{V_{ik}^n V_{ij}^n}{(V_i^n)^2V_k^n}  \Delta_{ik}^{n+1}    \\
&& -2\sum_{i,j,k\in\V}a_{ij}p_{ij}\frac{(V_{ij}^n)^2}{ (V_i^n)^2V_j^n    }\Delta_{ik}^{n+1}.
\eeqq
Recall that $\E\left(\Delta_{ik}^{n+1}|\mathcal{F}_n\right)=a_{ik}p_{ik}\frac{(x_{ik}^n)^2}{x_i^nx_k^n}$, for all $i,k\in\V$.
Finally, taking the conditional expectation yields
\begin{align}\nonumber
&\E\Big[H(x_{n+1})-H(x_n)-\zeta_{n+1}\big|\mathcal{F}_n\Big]\\ \nonumber
&=\sum_{i,j,k\in\V} \frac{V_{ij}^n V_{ik}^n}{V_i^n}\left[  \left(a_{ij}p_{ij}\frac{V_{ij}^n}{V_i^nV_j^n}\right)^2 +\left(a_{ik}p_{ik}\frac{V_{ik}^n}{V_i^nV_k^n}\right)^2-2 a_{ij}p_{ij} a_{ik}p_{ik}\frac{V_{ij}^nV_{ik}^n}{\left(V_i^nV_j^n\right)^2}\right]\\ \label{incr-error}
&=\frac{1}{T_n}\sum_{i,j,k\in\V} \frac{x_{ij}^n x_{ik}^n}{x_i^n}\left[ a_{ij}p_{ij}\frac{x_{ij}^n}{x_i^nx_j^n}-a_{ik}p_{ik}\frac{x_{ik}^n}{x_i^nx_k^n}\right]^2
=\frac{1}{T_n}p(x_n),
\end{align}
where $p$ is defined in \eqref{def_p}. We conclude by noting that $(H(x_n))$ is the sum of a converging process and a bounded submartingale.
\end{proof}

We have proved that the expected payoff process converges almost surely. We can now prove that the process $(x_n)$ converges to $\Lambda$, defined in~\eqref{def_Lam}.
\begin{prop}\label{prop_p0}
$(x_n)_{n\in\mathbb{N}}$ converges a.s.~to
$\Lambda$. More precisely, $(p(x_n))_{n\in\mathbb{N}}$ converges to $0$ a.s.
\end{prop}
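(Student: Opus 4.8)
The plan is to extract the estimate~\eqref{incr-error} obtained in the proof of Theorem~\ref{th_pot}, turn it into the summability of $\sum_n p(x_n)/T_n$, and then upgrade this to the pointwise statement $p(x_n)\to0$ by an excursion argument exploiting that the increments of $(x_n)$ are of order $1/T_n$. Concretely, from~\eqref{incr-error} and the decomposition already set up for Theorem~\ref{th_pot} I would write
\[
H(x_{n+1})-H(x_n)=\frac{p(x_n)}{T_n}+\zeta_{n+1}+M_{n+1},
\]
where $\sum_n|\zeta_{n+1}|<\infty$ a.s.\ and $(M_{n+1})$ is a martingale increment. Since $H$ is bounded on $\De$ (indeed $x_{ij}^2/(x_ix_j)\le1$ gives $H\le\sum_{i,j}a_{ij}p_{ij}$) and $p\ge0$, the process $H(x_n)-\sum_{k<n}\zeta_{k+1}$ is a bounded submartingale whose Doob compensator is exactly $\sum_{k<n}p(x_k)/T_k$. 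The submartingale convergence theorem (already invoked for Theorem~\ref{th_pot}) then forces this compensator to be a.s.\ finite, i.e.\ $\sum_n p(x_n)/T_n<\infty$ a.s.

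Since the increments of $T_n$ are bounded we have $T_n=O(n)$ (this also follows from Corollary~\ref{cor_Tn}), so $1/T_n\ge \Cst/n$ eventually and thus $\sum_n p(x_n)/n<\infty$ a.s. As $\sum_n 1/n=\infty$ and $p\ge0$, this already yields $\liminf_n p(x_n)=0$ a.s. The remaining and main point is to improve $\liminf p(x_n)=0$ into $\lim p(x_n)=0$. For this I would invoke the classical excursion lemma: if $u_n\ge0$ satisfies $\sum_n\gamma_n u_n<\infty$ with $\gamma_n>0$, $\sum_n\gamma_n=\infty$ and $|u_{n+1}-u_n|\le K\gamma_n$, then $u_n\to0$, because any excursion of $u$ above a fixed level $\e>0$ must last long enough (it moves by at most $K\gamma_n$ per step) to contribute at least $\e^2/K$ to $\sum\gamma_n u_n$, so only finitely many can occur. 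I would apply this with $u_n=p(x_n)$ and $\gamma_n=1/T_n$, the hypotheses $\sum\gamma_n u_n<\infty$ and $\sum\gamma_n=\infty$ being exactly what was established above.

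The hard part is the increment bound $|p(x_{n+1})-p(x_n)|\le \Cst/T_n$. On one side the increments of the occupation measure are genuinely of that order: from~\eqref{eup} together with $|\widetilde R_{n+1}|=O(1/T_n^{2})$ one gets $|x_{n+1}-x_n|\le \Cst/T_n$. On the other side $p$ is \emph{not} globally Lipschitz on $\De$: through $y_{ij}=a_{ij}p_{ij}x_{ij}/(x_ix_j)$ it is irregular — indeed unbounded — near $\partial\De$, so a naive chain-rule bound fails precisely where $x_n$ approaches the boundary. This is the same irregularity that makes Theorem~\ref{th_pot} delicate, and I expect it to be the crux here. I would handle it by localisation: on each region $\{\min_i x_i\ge\de\}$ the function $p$ is $C^1$ with gradient bounded by some $\Cst(\de)$, so the excursion argument applies verbatim to excursions staying in that region, while excursions that drive $x_n$ toward $\partial\De$ must be controlled separately, estimating the variation of $p$ along such a passage directly from the explicit form~\eqref{def_p} and the step bound $|x_{n+1}-x_n|\le\Cst/T_n$ rather than through a global Lipschitz constant. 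Once $p(x_n)\to0$ is proved, Lemma~\ref{restpoint} (the characterisation of $\Lambda$) together with the continuity of $p$ on $\De\setminus\partial\De$ shows that every interior limit point of $(x_n)$ lies in $\Lambda$, which is the asserted convergence of $(x_n)$ to $\Lambda$.
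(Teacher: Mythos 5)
Your first half --- extracting from \eqref{incr-error} that $H(x_n)-\xi_n$ is a bounded submartingale whose Doob compensator is $A_n=\sum_{k<n}p(x_k)/T_k$, hence that $\sum_n p(x_n)/T_n<\infty$ a.s. --- is exactly the paper's argument. The divergence, and the gap, lies in how you upgrade this to $p(x_n)\to0$. Your excursion lemma requires the one-step bound $|p(x_{n+1})-p(x_n)|\le K/T_n$, and you correctly identify this as the weak point, but the localisation you sketch does not close it. Near $\partial\De$ the bound genuinely fails: if a vertex $j$ has $V_j^n=O(1)$ while $T_n$ is large, a single successful communication on $\{i,j\}$ changes $V_j^n$ by a constant \emph{factor}, hence changes $y_{ij}^n=a_{ij}p_{ij}T_nV_{ij}^n/(V_i^nV_j^n)$ --- and with it $p(x_n)$ --- by an amount of order $1$, not $O(1/T_n)$. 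Saying that such passages ``must be controlled separately, estimating the variation of $p$ directly from the explicit form'' is precisely the missing part of the proof, and it is not clear that it can be carried out along these lines.

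The paper arranges the excursion argument so that no regularity of $p$ \emph{along the trajectory} is ever needed. Fix $\e>0$ and let $\delta>0$ be the distance between the sets $\Delta_{\e}$ and $\Delta_{\e/2}^c$, with $\Delta_{\e}$ as in \eqref{def_Delteps}; this positive separation of the two level sets is the only analytic input. An excursion from $\Delta_{\e}$ to $\Delta_{\e/2}^c$ forces the point $x_n$ itself to travel a distance at least $\delta$; since $|x_{r+1}-x_r|\le\Cst(a)/T_r$ by Lemma \ref{lem_StochApp} and \eqref{eup}, this forces $\sum_r 1/T_r\ge\delta/\Cst(a)$ over the excursion, during which $p(x_r)\ge\e/2$, so the compensator increases by at least $\e\delta/(2\Cst(a))$. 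Finiteness of $A_\infty$ (from the convergence of the bounded submartingale) then allows only finitely many such excursions, and since $x_n$ cannot remain in $\Delta_{\e}$ forever (as $1/T_n$ is not summable), one gets $p(x_n)\le\e$ eventually. In short: measure the length of the excursion in the $x$-variable, where the $O(1/T_n)$ step bound is available, rather than in the $p$-variable, where it is not. With that substitution the rest of your argument goes through.
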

Define for all $\varepsilon>0$ the set
\beq\label{def_Delteps}
\Delta_{\varepsilon}:=\{x\in\Delta\setminus\partial\Delta:\,
p(x)>\varepsilon\}.
\eeq

\begin{proof}This argument is similar to the proof of convergence to the set of equilibria in \cite{APSV}. Let, for all $n\in\N$, 
$$\xi_n=\sum_{k=0}^n\zeta_k.$$

We have proved that $(H(x_n)-\xi_n)$ is a bounded submartingale. Thanks to the Doob decomposition, we write it as the sum of a martingale $(M_n)$ and an increasing predictable process $(A_n)$. For all $n\in\mathbb{N}$, we have:
$$H(x_n)-\xi_n=M_n+A_n.$$
$(M_n)$ is an upper-bounded martingale, hence a.s.~converges. On the other hand, using \eqref{incr-error},
$$
A_{n+1}-A_n= \E\Big[H(x_{n+1})-H(x_n)-\zeta_{n+1}\big|\mathcal{F}_n\Big]=\frac{1}{T_n}p(x_n).
$$

Fix $\varepsilon>0$ and define $\delta>0$ the distance between the sets $\Delta_\varepsilon$ and $\Delta_{\varepsilon/2}^c$. First, notice that $x_n$ cannot stay in $\Delta_\varepsilon$ for ever as $1/T_n$ is not summable.\\
Then, assume that $x_n\in\Delta_\varepsilon$, $x_{n+1},...,x_{n+k-1}\in\Delta_{\varepsilon/2}\cap\Delta_\varepsilon^c, x_{n+k}\in\Delta_{\varepsilon/2}^c$, then:
$$A_{n+k}-A_n= \sum_{r=n}^{n+k-1} \frac{p(x_r)}{T_r} \ge \varepsilon \sum_{r=n}^{n+k-1} \frac{1}{2T_r}.$$
Therefore, using \eqref{eup} and Lemma \ref{lem_StochApp}, we have
$$\delta\le \sum_{r=n}^{n+k-1}|x_{r+1}-x_r|\le \sum_{r=n}^{n+k-1}\frac{\Cst(a)}{T_r}\le \frac{\Cst(a)}{\varepsilon} (A_{n+k}-A_n).$$
Then, if $(x_n)$ were infinitely often away from $\Lambda$, $A_n$, and consequently $H(x_n)$, would explode. This implies the conclusion.
\end{proof}

We end this section by stating a result that implies that the process $(T_n)$ has a linear asymptotic growth.

Define the constant
\beq\label{singe}
w_{\min}=\min\left\{a_{ij}p_{ij}:a_{ij}p_{ij}>0\right\}.
\eeq
\begin{coro} \label{cor_Tn} There exists a constant $h_{\min}=\Cst((V_{ij}^0)_{i,j\in\V})>0$ such that, a.s.
  $$\frac{T_n}{n} \rightarrow \lim_{n\to\infty}H(x_n) \,\in\,\left[h_{\min}w_{\min}, \sum_{i,j\in\V}a_{ij}p_{ij}\right] \text{ as } n\to \infty.$$
Moreover, for all $i\in\V$ such that $a_{ij}p_{ij}\ne0$ for some $j\sim i$, $n\in\N$, 
\begin{equation}
\label{lbni}
N_i(x_n)=\sum_{j\in \V, \,x_j>0}a_{ij}p_{ij} \frac{(x_{ij}^n)^2}{(x_i^n)^2x_j^n}\ge h_{\min}w_{\min}.
\end{equation}
\end{coro}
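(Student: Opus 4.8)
The plan is to identify the limit of $T_n/n$ with the (already established) limit of the expected payoff $H(x_n)$, and then to pin down the two-sided bounds and the estimate on $N_i$ by elementary Cauchy--Schwarz inequalities together with the mass constraint \eqref{def_h1}.

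First I would write $T_n=T_0+\sum_{k=0}^{n-1}\Delta_T^{k+1}$ and decompose each increment as $\Delta_T^{k+1}=H(x_k)+m_{k+1}$, where by \eqref{incr_T} we have $\E(\Delta_T^{k+1}|\mathcal{F}_k)=H(x_k)$, so that $m_{k+1}:=\Delta_T^{k+1}-\E(\Delta_T^{k+1}|\mathcal{F}_k)$ is a martingale increment. Since $0\le\Delta_T^{k+1}\le\sum_{k,l\in\V}a_{kl}$, the increments $m_{k+1}$ are uniformly bounded, so $\sum_k m_{k+1}/(k+1)$ is an $L^2$-bounded martingale, hence converges a.s., and Kronecker's lemma gives $\frac1n\sum_{k=0}^{n-1}m_{k+1}\to0$ a.s. By Theorem~\ref{th_pot}, $H(x_k)\to H_\infty:=\lim_n H(x_n)$ a.s., so Cesàro averaging yields $\frac1n\sum_{k=0}^{n-1}H(x_k)\to H_\infty$. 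Combining these, $T_n/n\to H_\infty$ a.s. Equivalently, this is the ratio form of the conditional Borel--Cantelli lemma applied to the nonnegative increments $\Delta_T^{k+1}$, whose conditional expectations $H(x_k)$ are non-summable once the lower bound below is in hand.

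Next I would show that $H(x_n)$ stays in $[h_{\min}w_{\min},\sum_{i,j}a_{ij}p_{ij}]$ for every $n$, whence so does the limit $H_\infty$. The upper bound is immediate from $x_{ij}\le x_i$ and $x_{ij}=x_{ji}\le x_j$, giving $x_{ij}^2/(x_ix_j)\le1$ and thus $H(x_n)\le\sum_{i,j}a_{ij}p_{ij}$. For the lower bound, restrict the sum to pairs with $a_{ij}p_{ij}>0$, bound each such weight by $w_{\min}$, and apply Cauchy--Schwarz:
\[
\sum_{i,j:\,a_{ij}p_{ij}>0}\frac{x_{ij}^2}{x_ix_j}\;\ge\;\frac{\big(\sum_{i,j:\,a_{ij}p_{ij}>0}x_{ij}\big)^2}{\sum_{i,j:\,a_{ij}p_{ij}>0}x_ix_j}\;\ge\;\Big(\sum_{i,j:\,a_{ij}p_{ij}>0}x_{ij}\Big)^2\;\ge\;h_1^2,
\]
using $\sum_{i,j}x_ix_j=(\sum_i x_i)^2=1$ and the constraint $\sum_{i,j:\,a_{ij}p_{ij}>0}x_{ij}\ge h_1$ from \eqref{def_h1}. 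Hence $H(x_n)\ge w_{\min}h_1^2$, which in particular makes $\sum_k H(x_k)=\infty$ and legitimises the Borel--Cantelli step above. For the bound \eqref{lbni}, fix $i$ with $a_{ij}p_{ij}>0$ for some $j\sim i$; restricting $N_i$ to active neighbours and using $a_{ik}p_{ik}\ge w_{\min}$ there, Cauchy--Schwarz (with $\sum_k x_k\le1$) gives
\[
N_i(x_n)\;\ge\;w_{\min}\sum_{k:\,a_{ik}p_{ik}>0}\frac{(x_{ik}^n)^2}{(x_i^n)^2x_k^n}\;\ge\;w_{\min}\Big(\sum_{k:\,a_{ik}p_{ik}>0}\frac{x_{ik}^n}{x_i^n}\Big)^2.
\]
The active proportion at $i$ is bounded below pathwise: since $V_{ik}^n=v_{ik}^0$ for inactive edges while $V_{ik}^n\ge v_{ik}^0$ is nondecreasing for active ones, setting $U_n:=\sum_{k:\,a_{ik}p_{ik}>0}V_{ik}^n$ and $C_i:=\sum_{k:\,a_{ik}p_{ik}=0}v_{ik}^0$ one obtains $\sum_{k:\,a_{ik}p_{ik}>0}x_{ik}^n/x_i^n=U_n/(U_n+C_i)\ge U_0/(U_0+C_i)=:h_i>0$. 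Taking $h_{\min}:=\min\big(h_1^2,\min_i h_i^2\big)$, a positive constant depending only on $(V_{ij}^0)_{i,j\in\V}$, yields both $N_i(x_n)\ge h_{\min}w_{\min}$ and the lower endpoint of the interval.

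The main obstacle is precisely the uniform positive lower bound on $H(x_n)$ and $N_i(x_n)$: it must hold for every $n$, not merely in the limit, and it simultaneously drives the non-summability needed for linear growth and furnishes the final estimate. The key mechanism is that the mass carried by communicating edges cannot be diluted below the fixed level $h_1$ from \eqref{def_h1} while the inactive edges stay frozen at their initial weights; Cauchy--Schwarz then converts this observation into the required lower bounds. Everything else reduces to a standard strong law for bounded martingale increments combined with the already-established convergence of $H(x_n)$.
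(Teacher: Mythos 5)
Your proposal is correct and follows essentially the same route as the paper: the same Cauchy--Schwarz lower bounds on $H$ and $N_i$ combined with the observation that the frozen edges keep the active mass above its initial level, and then the identification of $\lim T_n/n$ with $\lim H(x_n)$. The only cosmetic difference is that you prove the law-of-large-numbers step by hand (bounded martingale increments, $L^2$-convergence and Kronecker's lemma) where the paper simply invokes a generalised conditional Borel--Cantelli lemma, which is the same argument.
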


\begin{proof}
Given $x\in\De$, recall definition~\eqref{def_N} of $N_i(x)$. Using that $\sum_{j\in \V} x_j=1$, we have, by Cauchy-Schwarz inequality, if $x_i\ne0$, 
\begin{align*}
N_i(x)&=\left(\sum_{j\in \V, \,x_j>0} a_{ij}p_{ij}\frac{x_{ij}^2}{x_i^2x_j}\right)\sum_{j\in \V} x_j\ge w_{\min} \Big(\sum_{j: a_{ij}p_{ij}>0, \,x_j>0} \frac{x_{ij}}{x_i}\Big)^2\\
\end{align*}
Note that, for all $j\sim i$ with $a_{ij}p_{ij}=0$, we have $V^n_{ij}=v^0_{ij}$ a.s. for all $n\in\mathbb{N}$. Hence, almost surely
\begin{align*}
\sum_{j: a_{ij}p_{ij}>0} \frac{x^n_{ij}}{x^n_i}=1-\sum_{j: a_{ij}p_{ij}=0} \frac{v^0_{ij}}{V^n_i}
\ge1-\sum_{j: a_{ij}p_{ij}=0} \frac{v^0_{ij}}{V^0_i}=\sum_{j: a_{ij}p_{ij}>0} \frac{x^0_{ij}}{x^0_i}.
\end{align*}
Moreover, the last sum is lower bounded by  some constant $h_0>0$ as soon as there exists some $j\sim i$ such that $a_{ij}p_{ij}>0$. This provides the lower-bound on $N_i(x)$.\\
Similarly, for any $x\in\Delta$, defined in \eqref{def_delta}, we have, by Cauchy-Schwarz inequality and using that $\sum_{i,j} x_ix_j=1$,
\begin{align}\label{lbHx}
H(x)\ge w_{\min}\Big(\sum_{i,j:a_{ij}p_{ij}>0}x_{ij}\Big)^2\ge w_{\min}h_1^2,
\end{align}
where $h_1>0$ is defined in \eqref{def_h1}. This gives us a lower-bound on $\liminf_n H(x_n)$. The upper bound on $H$ is trivial.\\

Finally, recall that 
\[
\E\left(\left.T_{n+1}-T_n\right|\mathcal{F}_n\right)=H(x_n)
\]
and the result is now a direct consequence of  Theorem~\ref{th_pot} and a generalised version of Conditional Borel-Cantelli Lemma, see \cite[Lemma 2.7.33]{DCD2}.
\end{proof}


\subsection{Proof of Theorem~\ref{th_eq}}\label{sect_p2}


In the previous section, we proved that the occupation measure $(x_n)$ converges a.s.~to $\Lambda$, which is the set of points where the derivative of the Lyapunov function $H$ vanishes. As we already mentioned in Remark~\ref{rem1}, $H$ is not a strict Lyapunov function, i.e. $\Gamma\subseteq \Lambda$ but $\Gamma\neq \Lambda$, where $\Gamma$ is the set of equilibria of the ODE~\eqref{ode}, defined in~\eqref{def_Gamma}.

The aim of this section is to prove that the occupation measure converges a.s.~to the set of equilibria of the ODE~\eqref{ode}. To that end we show the following Proposition \ref{prop:setofequ} in Section \ref{s:setofequ}.\\

\begin{prop} \label{prop:setofequ}
Let $i$, $j$ $\in\V$, $i \sim j$.
Then $$\limsup_{n\rightarrow\infty} x_{ij}^n\big(y_{ij}^n-H(x_n)\big)^{-}=0.$$
\end{prop}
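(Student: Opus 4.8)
The plan is to recognize the target quantity as the negative part of a coordinate of the drift field, to transfer the statement from edges to vertices using the already-established convergence $p(x_n)\to 0$, and finally to run a stopping-time / excursion argument on the replicator-type dynamics of the vertex masses $(x_i^n)$.

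First I would note that $x_{ij}^n(y_{ij}^n-H(x_n))^- = (F(x_n)_{ij})^-$ (since $x_{ij}^n\ge 0$), and set $M:=\sum_{k,l}a_{kl}p_{kl}\ge H$. On any \emph{bad} time, where $x_{ij}^n(H(x_n)-y_{ij}^n)^+\ge\varepsilon$, one automatically has $x_{ij}^n\ge\varepsilon/M$ and $y_{ij}^n\le H(x_n)-\varepsilon$; in particular $x_i^n\ge\varepsilon/M$, so such times stay bounded away from the faces of $\partial\Delta$ carrying $i$ and $j$, and $y_{ij}^n,N_i(x_n),H(x_n)$ are all well controlled there. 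I would then invoke Lemma~\ref{rec-dl}: since $p(x)\ge 2x_{ij}(y_{ij}-N_i(x))^2$, the bound $x_{ij}^n\ge\varepsilon/M$ gives $|y_{ij}^n-N_i(x_n)|\le\sqrt{Mp(x_n)/(2\varepsilon)}$, which tends to $0$ by Proposition~\ref{prop_p0}. Hence on bad times $N_i(x_n)=y_{ij}^n+o(1)\le H(x_n)-\varepsilon/2$ eventually, so $x_i^n(N_i(x_n)-H(x_n))^-\ge \varepsilon^2/(2M)$. Consequently it suffices to prove the \emph{vertex version}
$$\limsup_{n\to\infty}x_i^n\big(N_i(x_n)-H(x_n)\big)^-=0\qquad\text{for every }i\in\V,$$
from which the edge statement follows by applying it at both endpoints.

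Second, I would exploit that $(x_i^n)_{i\in\V}$ is a probability vector ($\sum_i x_i^n=1$) whose drift is exactly the replicator field $x_i^n(N_i(x_n)-H(x_n))$: summing $F(x_n)_{ij}$ over $j$ and using $\sum_j x_{ij}=x_i$ together with $\sum_j x_{ij}y_{ij}=x_iN_i$ yields $\sum_j F_{ij}=x_i(N_i-H)$, with $H=\sum_i x_iN_i$ the mean fitness, so the total positive and negative drifts balance: $\sum_i x_i(N_i-H)^-=\sum_i x_i(N_i-H)^+$. From Lemma~\ref{lem_StochApp} the masses obey $x_i^{n+1}-x_i^n=\tfrac{1}{T_n}x_i^n(N_i(x_n)-H(x_n))+\text{(martingale increment)}+\text{(summable remainder)}$, where by Corollary~\ref{cor_Tn} the step $1/T_n\asymp 1/n$ is non-summable and the noise sums converge a.s. Arguing by contradiction, I would assume $x_i^n(N_i(x_n)-H(x_n))^-\ge\varepsilon$ infinitely often; on this bad set $x_i^n\ge\varepsilon/M$ is bounded below, its drift is $\le-\varepsilon/T_n$, and $N_i\ge h_{\min}w_{\min}>0$ keeps everything away from degeneracy.

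The main obstacle is the re-entry bookkeeping: because $x_i^n$ is only bounded (not convergent) and its positive and negative drifts balance globally, a below-average vertex can in principle be replenished by the excursions of other coordinates, so monotonicity is unavailable and one cannot simply say ``a coordinate pushed down while bounded below must decrease forever''. I would attack this by adapting the excursion scheme of Proposition~\ref{prop_p0}: introduce stopping times marking entrances into $\{x_i(N_i-H)^-\ge\varepsilon\}$ and exits into $\{x_i(N_i-H)^-\le\varepsilon/2\}$, show that $x_i^n$ drops by at least $\varepsilon\sum 1/T_n$ (up to the convergent noise) during each sojourn, and --- the delicate point --- produce a converging quantity that increases by a fixed amount on each \emph{return}, forcing only finitely many excursions. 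The natural candidates are $\log x_i^n$ (whose increment is $(N_i-H)/T_n$, so the recovery phase $N_i>H$ required for re-entry must pay a definite amount of upward drift which, combined with $x_i^n\le 1$, i.e. $\log x_i^n\le 0$, and the a.s. convergence of the noise, cannot be supplied infinitely often) or the slow, $\asymp 1/n$-Lipschitz variation of $N_i$ along $(x_n)$, which forces each completed down--up oscillation to consume a uniformly positive amount of $\sum 1/T_m$. Making one of these accountings precise, uniformly in the position relative to $\partial\Delta$ (from which the reduction has already pushed the relevant faces), is where the real work lies; everything else is the stochastic-approximation bookkeeping already set up in Lemma~\ref{lem_StochApp} and Corollary~\ref{cor_Tn}.
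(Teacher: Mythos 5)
Your reduction of the edge statement to the vertex statement $\limsup_n x_i^n(N_i(x_n)-H(x_n))^-=0$ is correct as far as it goes (on a bad time $x_{ij}^n\ge\eps/M$, so Lemma \ref{rec-dl} and Proposition \ref{prop_p0} force $y_{ij}^n-N_i(x_n)\to0$ along bad times, whence $x_i^n(N_i(x_n)-H(x_n))^-\ge\eps^2/(2M)$ there), but it only relocates the difficulty, and the step you yourself flag as ``where the real work lies'' is a genuine gap: neither of the two accountings you propose closes. For the $\log x_i^n$ candidate, re-entry into the bad set $\{x_i(N_i-H)^-\ge\eps\}$ does \emph{not} require any recovery phase with $N_i>H$: the process can exit and re-enter purely through fluctuations of $N_i-H$ between, say, $-\eps/2$ and $-2\eps$ while $x_i^n$ hovers at some level above $\eps/M$ and $\log x_i^n$ only ever decreases; no upward drift has to be ``paid'', so no converging quantity increases by a fixed amount per return. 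For the second candidate, showing that each down--up oscillation consumes a uniformly positive amount of $\sum 1/T_m$ yields no contradiction, since that series diverges; and even a bound of the form $\sum_{n\ \mathrm{bad}}1/T_n<\infty$ would not imply that there are only finitely many bad times, which is what the $\limsup$ statement requires.

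The missing idea --- and the reason the paper works at the edge level with the pair $(x_{ij},y_{ij})$ rather than at the vertex level --- is that one should control the \emph{depth} of the excursions rather than their \emph{number}. After a time $m_0$ beyond which $p(x_n)<\eps^4$ and $H(x_n)$ oscillates by less than $\eps/4$ (such a time exists a.s.\ by Theorem \ref{th_pot} and Proposition \ref{prop_p0}), Lemma \ref{lem53} shows that \emph{outside} $U_{ij}(\eps)\cup\Delta_{\eps^4}$ the efficiency $y_{ij}^n$ is a submartingale with drift $\ge\Cst/T_n$ and $x_{ij}^n$ is a supermartingale; the quantified drift first forces an entrance into $U_{ij}(\eps)$ (otherwise $y_{ij}^n\to\infty$), and then, during any subsequent excursion out of $U_{ij}(\eps)$, $y_{ij}^n$ cannot drop by more than $\eps/2$ and $x_{ij}^n$ cannot rise by more than $\eps/2$ relative to their values at the start of the excursion, which lies in $U_{ij}(2\eps)$ by Lemma \ref{2epsilon}. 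Hence the process never again leaves $U_{ij}(3\eps)$, which gives $\limsup_n x_{ij}^n(y_{ij}^n-H(x_n))^-\le 3\eps\,\Cst$, and one lets $\eps\to0$. To rescue your vertex-level plan you would need an analogue of Lemma \ref{lem53} for $N_i(x_n)$, i.e.\ a one-sided drift estimate for $N_i$ on $\{x_i\ge\eps,\ N_i\le H-\eps\}\cap\Delta_{\eps^4}^c$; this is not supplied and is not obviously true, because the drift of $N_i=\sum_k (x_{ik}/x_i)y_{ik}$ involves the neighbours' efficiencies $N_k$ and the moving weights $x_{ik}/x_i$, including edges with $x_{ik}$ small for which Lemma \ref{lem51} gives no control.
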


Proposition \ref{prop:setofequ} will enable us to conclude. Indeed, for all $n\in\mathbb{N}$, we have
\begin{align*}
H(x_n)=\sum_{i,j\in\V}y_{ij}^nx_{ij}^n= H(x_n)+\sum_{i,j\in\V }(y_{ij}^n-H(x_n))^+x_{ij}^n-\sum_{i,j\in\V}(y_{ij}^n-H(x_n))^-x_{ij}^n
\end{align*}
Now,  for all $i$, $j$ $\in\V$, Proposition
\ref{prop:setofequ} implies that $\lim_{n\to \infty} x_{ij}^n(y_{ij}^n-H(x_n))^{-}=0$. Subsequently, for all $i$, $j$ $\in\V$,
\begin{equation}
\label{xyr}
\lim_{n\to \infty} x_{ij}^n(y_{ij}^n-H(x_n))=0,
\end{equation}
so that $F(x_n)$ converges to $0$ as $n\to\infty$.

Let us show that this implies convergence of $(x_n)_{n\in\N}$ to the set of equilibria $\G$. Recall that $|\cdot|$ denotes the $L^1$-norm on $\mathbb{R}^{\V\times\V}$.
Let $L$ be the limit-set of $(x_n)_{n\in\N}$, which is a non-empty subset of $\Delta$ by compactness. Fix $\e>0$. Then there exists $n_0$ such that $|x_n-L|<\e$ for any $n\ge n_0$ (otherwise there would be a limit-point at distance at least $\e$ from $L$, by compactness). For any $n\ge n_0$, there exists $z\in L$ (that may depend on $n$) such that $|z-x_{n}|<\e$: let us prove that $z\in \Gamma$.\\
By contradiction, assume that there exist $i_0,j_0$ such that $z_{i_0j_0}>0$ and $|a_{i_0j_0}p_{i_0j_0}z_{i_0j_0}/z_{i_0}z_{j_0}-H(z)|=c_z>0$.
Fix $\delta\in (0,\min\{z_{ij}/4:z_{ij}>0\})$. Let $N\in \mathbb{N}$ be such that  $|x_{ij}^k\big(y_{ij}^k-H(x_k)\big)|<\delta$ for any $k\ge N$.
As $z\in L$, there exists $k\ge N$ such that $|x_k-z|<\delta$ which implies, using that $\delta<\min\{z_{ij}/4:z_{ij}>0\}$,  that $x^k_{ij}>\delta$ if and only if $z_{ij}>0$, for any $i,j\in\V$. In this case, it is straightforward to check that
\[
a_{ij}p_{ij}\left|\frac{x_{ij}^k}{x_i^kx_j^k}-\frac{z_{ij}^k}{z_i^kz_j^k}\right|,a_{ij}p_{ij}\left|\frac{(x_{ij}^k)^2}{x_i^kx_j^k}-\frac{(z_{ij}^k)^2}{z_i^kz_j^k}\right|\le \delta \verb?Cst?((a_{ml},p_{ml})_{m,l\in\V},z).
\]
Besides, if $x^k_{ij}\le\delta$ then $a_{ij}p_{ij}(x_{ij}^k)^2/x_i^kx_j^k\le \delta\verb?Cst?((a_{ml},p_{ml})_{m,l\in\V})$, using that $H$ is bounded and that $|x_{ij}^k\big(y_{ij}^k-H(x_k)\big)|<\delta$.\\
We deduce subsequently that $|H(x_k)-H(z)|<\delta \verb?Cst?((a_{ml},p_{ml})_{m,l\in\V},z)$. Finally, as soon as $\delta$ is small enough, we obtain a contradiction by noting that
\begin{eqnarray*}
\left|a_{i_0j_0}p_{i_0j_0}\frac{z_{i_0j_0}}{z_{i_0}z_{j_0}}-H(z)\right|&\le& \left|a_{i_0j_0}p_{i_0j_0}\frac{x_{i_0j_0}^k}{x_{i_0}^kx_{j_0}^k}-H(x_k)\right|+\delta \verb?Cst?((a_{ml},p_{ml})_{m,l\in\V},z)\\
&\le& \delta \verb?Cst?((a_{ml},p_{ml})_{m,l\in\V},z)<c_z.
\end{eqnarray*}
Therefore, $z\in\Gamma$.\\

\medskip

Let us now sketch the proof of Proposition \ref{prop:setofequ}, given in Section \ref{s:setofequ}. Define
\begin{equation}
U_{ij}(\eps):= \big\{x\in\Delta: x_{ij}<\eps \text{ or } y_{ij}-H(x)\ge -\eps\big\},\label{def_Uij}\\
\end{equation}
and  recall the definitions of $\Delta_{\varepsilon}$ and $h_{\min}$ respectively introduced in \eqref{def_Delteps}  and in Corollary \ref{cor_Tn}.

Proposition \ref{prop:setofequ} will follow from Lemma \ref{lem53}, which implies that both $x_{ij}^n$ (resp. $y_{ij}^n$) decreases (resp. increases) outside $U_{ij}(\eps)\cup\De_{\eps^4}$. 

\begin{lem} \label{lem53} 
Let $i$, $j$ $\in\V$, $i \sim j$,  with $a_{ij}p_{ij}>0$. Let
\beqq
Y_n&:=& \sum_{m=m_0}^n \Big(y_{ij}^m-y_{ij}^{m-1}-a_{ij}p_{ij}\frac{\eps^2}{6T_{m-1}}\Big)\1_{\{x_{m-1}\notin U_{ij}(\eps)\cup\Delta_{\eps^4}\}};\\
S_n&:=& \sum_{m=m_0}^n \Big(x_{ij}^m-x_{ij}^{m-1}+\frac{\eps^2}{2T_{m-1}}\Big)\1_{\{x_{m-1}\notin U_{ij}(\eps)\}}.
\eeqq
Assume $T_{m_0}\ge \Cst(a_{ij}p_{ij},a,\e)$ and $\e\in(0,1/9)$. Then
\begin{enumerate}
\item $(Y_n)_{n\ge m_0}$ (resp. $(S_n)_{n\ge m_0}$) is a submartingale (resp. supermartingale);\\
\item $\limsup_{n\ge m, m\rightarrow \infty}(Y_n-Y_m)^{-}=\limsup_{n\ge m, m\rightarrow \infty}(S_n-S_m)^{+}=0$.
\end{enumerate}
\end{lem}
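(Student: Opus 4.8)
The plan is to establish the two parts of Lemma~\ref{lem53} by analyzing the conditional increments of $y_{ij}^n$ and $x_{ij}^n$ on the event that $x_{n-1}\notin U_{ij}(\eps)\cup\Delta_{\eps^4}$ (resp.\ $x_{n-1}\notin U_{ij}(\eps)$), and then invoking a martingale-convergence argument for the $\limsup$ statements. I would first recall from \eqref{stoch_app} and Lemma~\ref{lem_StochApp} that $\E(x_{ij}^{n}-x_{ij}^{n-1}\mid\mathcal{F}_{n-1})=F(x_{n-1})_{ij}/T_{n-1}+R_{n-1}^{ij}$, where $F(x)_{ij}=x_{ij}(y_{ij}-H(x))$ and $\sum_n|R_n^{ij}|<\infty$ a.s. Outside $U_{ij}(\eps)$ we have both $x_{ij}\ge\eps$ and $y_{ij}-H(x)<-\eps$, so the drift of $x_{ij}^n$ is at most $-\eps^2/T_{n-1}$ to leading order; absorbing the error term $R_{n-1}^{ij}$ and the negligible higher-order contributions (which are $O(1/T_{n-1}^2)$ and summable) into the $\eps^2/(2T_{n-1})$ slack, and choosing $T_{m_0}$ large enough, this shows $\E(S_n-S_{n-1}\mid\mathcal{F}_{n-1})\le0$, i.e.\ $(S_n)$ is a supermartingale.

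For $(Y_n)$ the computation is analogous but on the efficiency variable $y_{ij}^n=a_{ij}p_{ij}x_{ij}^n/(x_i^nx_j^n)$. Here I would expand the increment of $y_{ij}^n$ in the same spirit as the Taylor expansion used in the proof of Theorem~\ref{th_pot}, writing $y_{ij}^n/y_{ij}^{n-1}$ in terms of the relative increments $\Delta_{ij}^n/V_{ij}^{n-1}$, $\Delta_i^n/V_i^{n-1}$, $\Delta_j^n/V_j^{n-1}$. Taking conditional expectations and using $\E(\Delta_{ik}^n\mid\mathcal{F}_{n-1})=a_{ik}p_{ik}(x_{ik}^{n-1})^2/(x_i^{n-1}x_k^{n-1})$, the leading drift of $y_{ij}^n$ should be $(a_{ij}p_{ij}/T_{n-1})$ times a combination of efficiency differences. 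The key point is that \emph{outside} $\Delta_{\eps^4}$ the quantity $p(x_{n-1})$ is small (at most $\eps^4$), which by Lemma~\ref{rec-dl} controls the sum $\sum_{k}(x_{ik}/x_i)(y_{ij}-y_{ik})^2$ and thus pins the efficiencies $y_{ik}$ near a common value $N_i(x_{n-1})$; combined with the lower bound $N_i(x_{n-1})\ge h_{\min}w_{\min}$ from \eqref{lbni} and the condition $y_{ij}-H(x)<-\eps$ that holds outside $U_{ij}(\eps)$, one extracts a strictly positive drift of order $a_{ij}p_{ij}\eps^2/T_{n-1}$, establishing that $(Y_n)$ is a submartingale after subtracting the explicit $a_{ij}p_{ij}\eps^2/(6T_{n-1})$ term.

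For part~$(2)$, both processes have increments bounded by $\Cst(a)/T_{n-1}$ in absolute value, and the ``reverse'' fluctuations are controlled by the martingale parts. Writing $S_n$ (resp.\ $Y_n$) as its Doob decomposition, the predictable part is monotone (decreasing for $S_n$, increasing for $Y_n$) by part~$(1)$, while the martingale part has summable conditional variance thanks to the $1/T_{n-1}^2$-summability coming from the linear growth of $T_n$ in Corollary~\ref{cor_Tn}, hence converges a.s.\ by Doob. A convergent martingale has $\limsup_{n\ge m,\,m\to\infty}$ of its increments equal to $0$ in both signs; since the compensator only pushes $S_n$ down and $Y_n$ up, the quantities $(S_n-S_m)^{+}$ and $(Y_n-Y_m)^{-}$ are dominated by the corresponding martingale fluctuations and therefore vanish in the $\limsup$.

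I expect the main obstacle to be the drift computation for $(Y_n)$: unlike $x_{ij}^n$, the efficiency $y_{ij}^n$ is a ratio whose increment mixes the increments at the edge $ij$ with those at all edges incident to $i$ and $j$, so controlling its conditional expectation requires both the Taylor expansion with its higher-order error terms \emph{and} the delicate use of the smallness of $p(x_{n-1})$ outside $\Delta_{\eps^4}$ to guarantee that the competing efficiencies do not overwhelm the favorable $-\eps$ gap. Matching the constants so that a clean $a_{ij}p_{ij}\eps^2/6$ lower bound survives (for $\eps\in(0,1/9)$ and $T_{m_0}$ large) is where the careful bookkeeping lies; the supermartingale claim for $(S_n)$ and the convergence argument in part~$(2)$ are comparatively routine.
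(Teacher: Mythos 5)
Your proposal follows the paper's own argument essentially step for step: the supermartingale property of $(S_n)$ comes from Lemma~\ref{lem_StochApp} together with the fact that outside $U_{ij}(\eps)$ one has $x_{ij}^n\ge\eps$ and $y_{ij}^n-H(x_n)<-\eps$; the submartingale property of $(Y_n)$ comes from the Taylor-expanded drift $\frac{1}{T_n}y_{ij}^n\big(y_{ij}^n-N_i(x_n)-N_j(x_n)+H(x_n)\big)$ of Lemma~\ref{stochapp2} combined with the pinning $|y_{ij}-N_i(x)|<\eps/3$ outside $\Delta_{\eps^4}$ from Lemma~\ref{lem51}; and part~$(2)$ follows from the a.s.\ convergence of the $L^2$-bounded martingale parts of the Doob decompositions, exactly as in the paper. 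The only cosmetic difference is that the paper gets the explicit constant by bounding $y_{ij}^n\ge a_{ij}p_{ij}\eps$ directly from $x_{ij}^n\ge\eps$ rather than via $N_i\ge h_{\min}w_{\min}$, but this does not change the argument.
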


Lemma \ref{lem53} is shown is Section \ref{p:lem53}. It follows from Lemma \ref{lem_StochApp} for its part on $x_{ij}^n$, and from the following two Lemmas \ref{stochapp2} and \ref{lem51} for its part on $y_{ij}^n$. 

Lemma \ref{stochapp2} is proved in Section \ref{p:stochapp2}, and shows that the increment of $y_{ij}^n$ is driven by a differential equation of the type $\dot{y}=G(x)$, where, for all $x\in\De$, we let 
$$G(x)=\left(y_{ij}\Big(y_{ij}-N_i(x)-N_j(x)+H(x)\Big)\right)_{i,j\in\V:\, x_{ij}>0}.$$
Outside $U_{ij}(\eps)\cup\De_{\eps^4}$,  Lemma \ref{lem51}  implies that $$G(x)_{ij}\approx y_{ij}(H(x)-y_{ij}),$$
which indeed yields that $y_{ij}^n$ increases on average.
\begin{lem} \label{stochapp2}
For all $i$, $j$ $\in\V$, the increment of $(y_{ij}^n)_n$ expands as follows:
\begin{equation}
\label{defy}
y_{ij}^{n+1}-y_{ij}^n= \frac{1}{T_n} G(x_n)_{ij} + r_{ij}^{n+1} + \zeta_{ij}^{n+1},
\end{equation}
where $(r_{ij}^n)$ is predictable, $\E(\zeta_{ij}^{n+1}|\mathcal{F}_n)=0$ and 
$$|y_{ij}^{n+1}-y_{ij}^{n}|,\,\,|\zeta_{ij}^{n+1}|\le \frac{c_0}{T_nx_i^nx_j^n}\text{, and } |r_{ij}^{n+1}|\le \frac{c_0}{(T_nx_i^nx_j^n)^2},$$
where $c_0=\Cst(a)$.
\end{lem}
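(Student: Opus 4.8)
The plan is to expand the one-step increment of $y_{ij}^n$ exactly, to isolate the part that is linear in the (bounded) payoff increments, to show that its conditional expectation is \emph{exactly} $T_n^{-1}G(x_n)_{ij}$, and then to control the genuinely quadratic remainder. Throughout I use the identity $y_{ij}^n=a_{ij}p_{ij}\,V_{ij}^nT_n/(V_i^nV_j^n)$, which follows from $x_{ij}^n=V_{ij}^n/T_n$, and write $\Delta_{ij}^{n+1}$, $\Delta_i^{n+1}=\sum_k\Delta_{ik}^{n+1}$, $\Delta_j^{n+1}$, $\Delta_T^{n+1}$ for the increments of $V_{ij}^n$, $V_i^n$, $V_j^n$, $T_n$. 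Putting the increment of $y_{ij}^n$ over the common denominator $V_i^nV_j^n(V_i^n+\Delta_i^{n+1})(V_j^n+\Delta_j^{n+1})$ makes it an explicit rational function of these increments, whose linear part is
$$L:=a_{ij}p_{ij}\Big(\frac{T_n\Delta_{ij}^{n+1}}{V_i^nV_j^n}+\frac{V_{ij}^n\Delta_T^{n+1}}{V_i^nV_j^n}-\frac{V_{ij}^nT_n\Delta_i^{n+1}}{(V_i^n)^2V_j^n}-\frac{V_{ij}^nT_n\Delta_j^{n+1}}{V_i^n(V_j^n)^2}\Big).$$
Since the $V$'s are non-decreasing and bounded below by positive constants, since $V_{ij}^n\le V_i^n\le T_n$ and $V_{ij}^n\le V_j^n\le T_n$, and since every increment is $\le\Cst(a)$, the explicit expression gives at once the crude bound $|y_{ij}^{n+1}-y_{ij}^n|\le c_0\,T_n/(V_i^nV_j^n)=c_0/(T_nx_i^nx_j^n)$.

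For the drift, I would take the conditional expectation of $L$ using $\E(\Delta_{ij}^{n+1}\mid\mathcal{F}_n)=a_{ij}p_{ij}(x_{ij}^n)^2/(x_i^nx_j^n)$ from \eqref{proba_succes}, $\E(\Delta_T^{n+1}\mid\mathcal{F}_n)=H(x_n)$ from \eqref{incr_T}, and $\E(\Delta_i^{n+1}\mid\mathcal{F}_n)=\sum_k a_{ik}p_{ik}(x_{ik}^n)^2/(x_i^nx_k^n)$ by additivity. Rewriting these in terms of the $V$'s and using the two identities $a_{ij}p_{ij}V_{ij}^n/(V_i^nV_j^n)=y_{ij}^n/T_n$ and $\sum_k a_{ik}p_{ik}(V_{ik}^n)^2/V_k^n=N_i(x_n)(V_i^n)^2/T_n$ (the latter being just the definition \eqref{def_N} of $N_i$), the four terms collapse to
$$\E(L\mid\mathcal{F}_n)=\frac{1}{T_n}\,y_{ij}^n\big(y_{ij}^n+H(x_n)-N_i(x_n)-N_j(x_n)\big)=\frac{1}{T_n}G(x_n)_{ij}.$$

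I then define the decomposition by setting $\zeta_{ij}^{n+1}:=(y_{ij}^{n+1}-y_{ij}^n)-\E(y_{ij}^{n+1}-y_{ij}^n\mid\mathcal{F}_n)$, which is automatically a martingale increment, and $r_{ij}^{n+1}:=\E(y_{ij}^{n+1}-y_{ij}^n-L\mid\mathcal{F}_n)$, which is $\mathcal{F}_n$-measurable hence predictable; together with the previous paragraph these make \eqref{defy} an identity. The bound on $\zeta_{ij}^{n+1}$ follows from the increment bound (a conditional expectation cannot exceed the pointwise bound, so $|\zeta_{ij}^{n+1}|\le 2c_0/(T_nx_i^nx_j^n)$, the $2$ absorbed into $c_0$), and $\E(\zeta_{ij}^{n+1}\mid\mathcal{F}_n)=0$ by construction.

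The remaining bound $|r_{ij}^{n+1}|\le c_0/(T_nx_i^nx_j^n)^2$ is the crux, and the place to be careful. It is tempting to treat the ratios $\Delta_{ij}^{n+1}/V_{ij}^n$, etc., as small Taylor parameters, but they are only \emph{bounded} (by, say, $a_{ij}/v_{ij}^0$), not small, so I would instead keep $R:=y_{ij}^{n+1}-y_{ij}^n-L$ as an explicit rational function and bound it pointwise. Recombining $L$ and the exact increment over the denominator $(V_i^nV_j^n)^2(V_i^n+\Delta_i^{n+1})(V_j^n+\Delta_j^{n+1})$, the numerator of $R$ consists precisely of the terms carrying two increment factors, namely (schematically) $V_i^nV_j^n\,\Delta_{ij}^{n+1}\Delta_T^{n+1}$ and $V_{ij}^nT_n\,\Delta_i^{n+1}\Delta_j^{n+1}$, together with the cross term produced by replacing the shifted denominator by $V_i^nV_j^n$. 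Using once more $V_{ij}^n\le V_i^n\le T_n$ and $V_{ij}^n\le V_j^n\le T_n$, monotonicity (so the denominator is $\ge(V_i^nV_j^n)^3$), and boundedness of the increments, each numerator term is $\le\Cst(a)\,T_n^2V_i^nV_j^n$, whence $|R|\le c_0\,T_n^2/(V_i^nV_j^n)^2=c_0/(T_nx_i^nx_j^n)^2$ pointwise; since this is $\mathcal{F}_n$-measurable it passes to $r_{ij}^{n+1}=\E(R\mid\mathcal{F}_n)$. The single structural inequality making this work is $V_i^nV_j^n\le T_n^2$: it is exactly what converts each extra increment factor into an extra power of $1/(T_nx_i^nx_j^n)$, so that the quadratic remainder is genuinely one order below $L$.
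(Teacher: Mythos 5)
Your proof is correct and rests on the same underlying strategy as the paper's: write $y_{ij}^n=a_{ij}p_{ij}T_nV_{ij}^n/(V_i^nV_j^n)$, linearize its one-step increment in the four payoff increments $\Delta_T^{n+1},\Delta_{ij}^{n+1},\Delta_i^{n+1},\Delta_j^{n+1}$, check that the conditional expectation of the linear part is exactly $T_n^{-1}G(x_n)_{ij}$ (your identity $\sum_k a_{ik}p_{ik}(V_{ik}^n)^2/V_k^n=N_i(x_n)(V_i^n)^2/T_n$ is the right one), and bound the genuinely second-order remainder by $\Cst\,(T_n/(V_i^nV_j^n))^2$; your $L$ coincides term by term with the paper's $\delta_{ij}^{n+1}=y_{ij}^n(s_{n+1}+t_{n+1}-u_{n+1}-v_{n+1})$. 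The executions differ in two respects. First, the paper writes $y_{ij}^{n+1}=y_{ij}^n f(s,t,u,v)$ with $f=(1+s)(1+t)/((1+u)(1+v))$ and invokes a Taylor expansion with error $O(\eta(\epsilon+\eta))$, whereas you keep the remainder as an explicit rational function over the common denominator and bound it using $V_{ij}^n\le V_i^n\wedge V_j^n\le T_n$ and $V_i^nV_j^n\le T_n^2$; this is the honest way to make the implied constant explicit, since, as you rightly note, the ratios $\Delta_{ij}^{n+1}/V_{ij}^n$ etc.\ are only bounded, not small (in both treatments the resulting constant also depends on the initial weights through the lower bounds $V_{ij}^n\ge v_{ij}^0$ and $T_n\ge T_0$, a dependence the paper absorbs into $\Cst(a)$). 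Second, your decomposition takes $\zeta_{ij}^{n+1}$ to be the centered \emph{full} increment and $r_{ij}^{n+1}:=\E(y_{ij}^{n+1}-y_{ij}^n-L\mid\mathcal{F}_n)$, which is genuinely $\mathcal{F}_n$-measurable; the paper instead centers only the linear part, so its $r_{ij}^{n+1}$ is the pointwise Taylor remainder and is not literally predictable (harmless for the way the lemma is used, since only the pointwise bound on $r$ and the martingale property of $\zeta$ matter, but your version makes the statement exactly true at the cost of a factor $2$ in $c_0$).
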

\begin{lem} \label{lem51}
Assume that $\eps\in(0,1/9)$, and that $x\in\Delta_{\eps^4}^c$. Then, for all vertices $i$ and $j$, $i \sim j$, such that $x_{ij}>\eps$, we have:
$$|y_{ij}-N_i(x)|<\frac{\eps}{3} \text{ and } |y_{ij}-N_j(x)|<\frac{\eps}{3}.$$
\end{lem}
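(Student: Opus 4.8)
The plan is to read the result directly off the sum-of-squares representation of the Lyapunov derivative $p$ established in Lemma~\ref{rec-dl}. That lemma gives
$$p(x) = 2\sum_{i,j\in\V} x_{ij}\big(y_{ij}-N_i(x)\big)^2,$$
a sum of nonnegative terms over all ordered pairs. The hypothesis $x\in\Delta_{\eps^4}^c$ means precisely that $p(x)\le\eps^4$, so each individual summand is controlled by the total: for the fixed pair $i\sim j$ of interest,
$$x_{ij}\big(y_{ij}-N_i(x)\big)^2 \le \frac{p(x)}{2}\le\frac{\eps^4}{2}.$$

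First I would isolate this pair and use $x_{ij}>\eps$ to divide, obtaining $\big(y_{ij}-N_i(x)\big)^2 < \eps^4/(2\eps)=\eps^3/2$, hence $|y_{ij}-N_i(x)|<\eps\sqrt{\eps/2}$. Then I would check the elementary numerical inequality $\eps\sqrt{\eps/2}<\eps/3$, which is equivalent to $\eps<2/9$ and therefore holds under the standing assumption $\eps<1/9$; this yields the first bound $|y_{ij}-N_i(x)|<\eps/3$.

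For the second bound I would simply invoke the symmetry built into the representation: since $x_{ji}=x_{ij}$, $y_{ji}=y_{ij}$, and the pair $(j,i)$ appears as its own term in the same sum, the identical argument applied to $x_{ji}\big(y_{ji}-N_j(x)\big)^2\le p(x)/2\le\eps^4/2$ gives $|y_{ij}-N_j(x)|<\eps/3$. No separate estimate is needed, because both terms already sit inside $p(x)$.

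I do not expect any real obstacle here: the content of the lemma is entirely contained in Lemma~\ref{rec-dl}, and the only genuine step is the numerical verification that the exponent $3/2$ together with the factor $1/\sqrt{2}$ is absorbed by the constraint $\eps<1/9$. The one point to keep in mind is that $\Delta_{\eps^4}^c$ is the complement taken within $\Delta$ (possibly including boundary points), but since $x_{ij}>\eps$ forces $x_i,x_j>0$, all of $y_{ij}$, $N_i(x)$ and $N_j(x)$ are well defined, and the single inequality $p(x)\le\eps^4$ is all that the argument uses.
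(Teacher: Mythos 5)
Your proof is correct and follows exactly the route the paper takes: the paper's entire proof of this lemma is the single line ``Follows directly from Lemma~\ref{rec-dl},'' and you have filled in precisely the intended details (isolating the nonnegative summand $x_{ij}(y_{ij}-N_i(x))^2\le p(x)/2\le \eps^4/2$, dividing by $x_{ij}>\eps$, and checking that $\eps\sqrt{\eps/2}<\eps/3$ holds for $\eps<2/9$, with the $j$-bound obtained by the symmetry $x_{ji}=x_{ij}$, $y_{ji}=y_{ij}$).
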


\begin{proof}
Follows directly from Lemma \ref{rec-dl}.
\end{proof}

\subsubsection{Proof of Lemma \ref{stochapp2}}
\label{p:stochapp2}
Using $(1+c)^{-1}=1-c+O(c^2)$ for all $c\ge 0$, we deduce that
\begin{equation}
\label{dl4}
f(s,t,u,v)=\frac{(1+s)(1+t)}{(1+u)(1+v)}=1+s+t-u-v+O(\eta(\e+\eta))
\end{equation}
if $t\in[0,\e]$  and $s$, $u$, $v$ $\in[0,\eta]$.

Now recall $y_{ij}=a_{ij}p_{ij}T_nV_{ij}^n/(V_i^nV_j^n)$: therefore,
$$y_{ij}^{n+1}=y_{ij}^nf(s_{n+1},t_{n+1},u_{n+1},v_{n+1}),$$
where
\begin{align*}
&s_{n+1}=\frac{T_{n+1}-T_n}{T_n},\,\,t_{n+1}=\frac{V_{ij}^{n+1}-V_{ij}^n}{V_{ij}^n},\\
&u_{n+1}=\frac{V_{i}^{n+1}-V_{i}^n}{V_{i}^n},\,\,v_{n+1}=\frac{V_{j}^{n+1}-V_{j}^n}{V_{j}^n}.
\end{align*}

Let 
$$\de_{ij}^{n+1}=y_{ij}^{n}(s_{n+1}+t_{n+1}-u_{n+1}-v_{n+1}),\,\,
\zeta_{ij}^{n+1}=\de_{ij}^{n+1}-\Es(\de_{ij}^{n+1}|\Fn).$$
Note that $\zeta_{ij}^{n+1}$ uniquely determines $r_{ij}^{n+1}$ from \eqref{defy}. 

By definition of $H$, $N_i$ and $y_{ij}$, we have
\begin{align*}
&\Es(s_{n+1}|\Fn)=\frac{H(x_n)}{T_n},\,\,\Es(t_{n+1}|\Fn)=\frac{y_{ij}^n}{T_n},\\
&\Es(u_{n+1}|\Fn)=\frac{N_i(x_n)}{T_n},\,\,\Es(v_{n+1}|\Fn)=\frac{N_j(x_n)}{T_n},
\end{align*}
so that 
$$\Es(\de_{ij}^{n+1}|\Fn)
=\frac{1}{T_n} y_{ij}^n\Big(y_{ij}^n-N_i(x_n)-N_j(x_n)+H(x_n)\Big).$$

On the other hand, $s_{n+1}T_n$, $t_{n+1}V_{ij}^{n}$, $u_{n+1}V_{i}^{n}$ and $v_{n+1}V_{j}^{n}$ are all upper bounded by $\Cst(a)$, which implies that
$$|\de_{ij}^{n+1}|\le\Cst(a)\frac{y_{ij}^n}{V_{ij}^n}=\frac{\Cst(a)}{T_nx_i^nx_j^n}.$$
Also, assume w.l.o.g. that $V_i^n\le V_j^n$; then it follows from \eqref{dl4} that
$$|r_{ij}^{n+1}|\le\Cst (s_{n+1}\vee u_{n+1}\vee v_{n+1})t_{n+1}y_{ij}^n
\le\frac{\Cst(a)}{V_{ij}^nV_i^n}\frac{T_nV_{ij}^n}{V_i^nV_j^n}
\le\Cst(a)\left(\frac{T_n}{V_i^nV_j^n}\right)^2,
$$
which enables us to conclude, noting that $|y_{ij}^{n+1}-y_{ij}^{n}|\le|\de_{ij}^{n+1}|+|r_{ij}^{n+1}|$.
\subsubsection{Proof of Lemma \ref{lem53}}
\label{p:lem53}
Assume $x_n\not\in U_{ij}(\e)\cup\De_{\e^4}$. 

Using  that $|\tilde{R}_{n+1}^{ij}|\le\Cst(a)/T_n^2$ from \eqref{def_R-net}, Lemma \ref{lem_StochApp} implies 
\beqq
\E(x_{ij}^{n+1}-x_{ij}^n|\mathcal{F}_n)&=&\frac{x_{ij}^n}{T_n}\big(y_{ij}^n - H(x_n)\big) +\Es(\tilde{R}_{n+1}^{ij}|\F_n)\\
&\le & -\frac{\eps^2}{T_n}+\frac{\Cst(a)}{T_n^2}\le -\frac{\eps^2}{2T_n},
\eeqq
if $n\ge m_0$ and $T_{m_0}\ge\Cst(a,\e)$, so that $(S_n)_{n\ge m_0}$ is a supermartingale. 

Also, using Lemma \ref{lem51}, 
$$y_{ij}^n-N_i(x_n)-N_j(x_n)+H(x_n)\ge
\frac{\eps}{3}.$$
Hence, using now Lemma \ref{stochapp2},
\begin{align*}
\E\big(y_{ij}^{n+1}-y_{ij}^n|\mathcal{F}_n\big)&\ge\frac{\e}{3}\frac{y_{ij}^n}{T_n}-\frac{c_0}{(T_nx_i^nx_j^n)^2}=\frac{1}{T_nx_i^nx_j^n}\left(\frac{a_{ij}p_{ij}x_{ij}^n\eps}{3}- \frac{c_0}{T_nx_i^nx_j^n}\right)\\
&\ge \frac{1}{T_nx_i^nx_j^n}\left(\frac{a_{ij}p_{ij}\eps^2}{3}- \frac{c_0}{T_n\eps^2}\right)\ge \frac{\Cst(a_{ij}p_{ij},a,\e)}{T_n}
\end{align*}
if  $n\ge m_0$ and $T_{m_0}\ge\Cst(a_{ij}p_{ij},a,\e)$. This concludes the proof of $(1)$.\\

In order to prove $(2)$, define $\Pi_n$ and $\Xi_n$ as the martingale parts in the Doob decompositions of $Y_n$ and $S_n$:
\beqq
\Pi_n&:=& Y_n-\sum_{m=m_0}^n \E\big[Y_m-Y_{m-1}|\mathcal{F}_n\big],\\
\Xi_n&:=& S_n-\sum_{m=m_0}^n \E\big[S_m-S_{m-1}|\mathcal{F}_n\big].
\eeqq
By Lemma~\ref{stochapp2}, we have for all $n\ge m_0$,
\beqq
\E\Big[\big(\Pi_{n+1}-\Pi_n\big)^2\big|\mathcal{F}_n\Big] &\le & \E\Big[\big(\zeta_{ij}^{n+1}\big)^2\1_{\{x_n\notin U_{ij}(\eps)\cup\Delta_{\eps^4}\}}\big|\mathcal{F}_n\Big]\\
&\le&  \frac{c_0^2}{(T_nx_i^nx_j^n)^2} \1_{\{x_n\notin U_{ij}(\eps)\cup\Delta_{\eps^4}\}}\le \frac{c_0^2}{\eps^4T_n^2}.
\eeqq
Therefore, using Corollary \ref{cor_Tn}, $(\Pi_n)_n$ is bounded in $\mathbb{L}^2$ and hence converges a.s. We conclude for $(\Xi_n)_n$ with similar computations, and (2) follows immediately.
\subsubsection{Proof of Proposition \ref{prop:setofequ}}
\label{s:setofequ}
  Let us first prove the following auxilliary Lemma \ref{2epsilon}.
  \begin{lem} \label{2epsilon}
Let $\eps>0$, $i$, $j$ $\in\V$, $i \sim j$, and assume $T_n\ge\Cst(a,\e)$. If $~{x_n\in U_{ij}(\eps)}$ and $|H(x_{n+1})-H(x_n)|<\eps/2$, then $x_{n+1}\in U_{ij}(2\eps)$, where $U_{ij}(\cdot)$ is defined in \eqref{def_Uij}.
\end{lem}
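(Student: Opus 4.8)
The plan is to exploit the disjunctive form of the definition \eqref{def_Uij} and split according to the size of $x_{ij}^{n+1}$. Since $x_{n+1}\in U_{ij}(2\eps)$ holds as soon as \emph{either} $x_{ij}^{n+1}<2\eps$ \emph{or} $y_{ij}^{n+1}-H(x_{n+1})\ge-2\eps$, the case $x_{ij}^{n+1}<2\eps$ is immediate. I would therefore reduce at once to the case $x_{ij}^{n+1}\ge2\eps$ and aim to prove the second inequality, $y_{ij}^{n+1}-H(x_{n+1})\ge-2\eps$.

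In that regime I would first control the increment of $x_{ij}$. Combining \eqref{eup} with the bound on $\widetilde R_{n+1}$ from \eqref{ubr}, one gets $|x_{ij}^{n+1}-x_{ij}^n|\le \Cst(a)/T_n<\eps$ as soon as $T_n\ge\Cst(a,\eps)$, whence $x_{ij}^n\ge x_{ij}^{n+1}-\eps\ge\eps$. This single inequality yields two things I need. First, since $x_i^n=\sum_{k}x_{ik}^n\ge x_{ij}^n$ (and likewise for $j$), it forces the lower bound $x_i^nx_j^n\ge\eps^2$. Second, because $x_n\in U_{ij}(\eps)$ while $x_{ij}^n\ge\eps$ rules out the first alternative in \eqref{def_Uij}, the second alternative must hold, i.e. $y_{ij}^n-H(x_n)\ge-\eps$.

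The hard part is controlling the increment of $y_{ij}$, because it is \emph{not} uniformly small: Lemma~\ref{stochapp2} only provides $|y_{ij}^{n+1}-y_{ij}^n|\le c_0/(T_nx_i^nx_j^n)$, which degenerates as $x_i^nx_j^n\to0$. This is precisely where the lower bound $x_i^nx_j^n\ge\eps^2$ obtained above is essential: it gives $|y_{ij}^{n+1}-y_{ij}^n|\le c_0/(T_n\eps^2)<\eps/2$ provided $T_n\ge\Cst(a,\eps)$. Once this increment is tamed, the remainder is routine triangle-inequality bookkeeping.

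Finally I would conclude by decomposing
\[
y_{ij}^{n+1}-H(x_{n+1})=\big(y_{ij}^n-H(x_n)\big)+\big(y_{ij}^{n+1}-y_{ij}^n\big)+\big(H(x_n)-H(x_{n+1})\big)
\]
and bounding the three terms from below by $-\eps$, $-\eps/2$ and $-\eps/2$ respectively, the last one using the hypothesis $|H(x_{n+1})-H(x_n)|<\eps/2$. This yields $y_{ij}^{n+1}-H(x_{n+1})\ge-2\eps$, hence $x_{n+1}\in U_{ij}(2\eps)$, and the constant $\Cst(a,\eps)$ in the statement is just the maximum of the two thresholds on $T_n$ coming from the $x_{ij}$ and $y_{ij}$ increment bounds.
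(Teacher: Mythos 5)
Your proof is correct and follows essentially the same argument as the paper's: the only cosmetic difference is that you split on the size of $x_{ij}^{n+1}$ rather than on $x_{ij}^n\le\eps$ versus $x_{ij}^n>\eps$, but both routes use the same ingredients, namely the $\Cst(a)/T_n$ bound on the increment of $x_{ij}$, the lower bound $x_i^nx_j^n\ge\eps^2$ to tame the $y_{ij}$ increment from Lemma~\ref{stochapp2}, and the triangle inequality with the hypothesis on $|H(x_{n+1})-H(x_n)|$.
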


\begin{proof}
Assume $x_n\in U_{ij}(\eps)$. Then either $x_{ij}^n\le \eps$ or $y_{ij}^n-H(x_n)\ge -\eps$.

If  $x_{ij}^n\le \eps$ then, using \eqref{uber}--\eqref{eup}, we deduce $|x_{ij}^{n+1}-x_{ij}^n|\le\Cst(a)/T_n\le\e$ if $T_n\ge\Cst(a,\e)$, so that $x_{ij}^{n+1}\le 2\eps$.

If $x_{ij}^n>\eps$ and $y_{ij}^n-H(x_n)\ge -\eps$, then $|y_{ij}^{n+1}-y_{ij}^n|\le\Cst(a)/(\e^2T_n)\le\e/2$ if $T_n\ge\Cst(a,\e)$ by Lemma \ref{stochapp2}. By assumption, $|H(x_{n+1})-H(x_n)|<\eps/2$, so that we conclude that $y_{ij}^{n+1}-H(x_{n+1})\ge -2\eps$.
\end{proof}

  Let us now show Proposition \ref{prop:setofequ}. We fix $\eps>0$ and $m_0\in\mathbb{N}$, and assume $T_{m_0}\ge \Cst(a_{ij}p_{ij},a,\e)$, so that the assumptions of Lemmas \ref{lem53} and \ref{2epsilon} hold. Let $\tau_{m_0}$
  be the stopping time
 $$\tau_{m_0}=\inf\Big\{n\ge m_0\,:\, x_n\in
  \Delta_{\eps^{4}}  \textrm{ or
  } |H(x_n)-H(x_{m_0})|>\frac{\eps}{4}\Big\}.$$
We want to prove that either $\tau_{m_0}<\infty$, or $x_n\in
  U_{ij}(3\eps)$ for all large $n$. Recall that $U_{ij}(\cdot)$ is defined in~\eqref{def_Uij}. This will allow us to conclude, as Proposition~\ref{prop_p0} 
  and Theorem~\ref{th_pot} imply that there exists almost surely
  $m_0\in\mathbb{N}$ s.t. $\tau_{m_0}=\infty.$

Let $\sigma_{m_0}$ be the stopping time
$$\sigma_{m_0}:=\inf\{n\ge m_0 \,:\, x_n\in U_{ij}(\eps)\}.$$
Lemma~\ref{lem53}~$(2)$ implies that there exists a.s.~a
(random) $m_0\in\mathbb{N}$ such that, for all $n\ge m \ge m_0$,
\beq
 (Y_n-Y_m)^-\le \frac{\eps}{2},\qquad(S_n-S_m)^+\le
\frac{\varepsilon}{2}.\label{RSest}
\eeq
Therefore  $\sigma_{m_0}<\infty$: indeed, otherwise $y_{ij}^n\to\iy$ as$n\to\iy$ by Lemma \ref{lem53} (recall $\tau_{m_0}=\iy$, since $\sum_{n\ge m_0}T_n^{-1}=\infty$, so that $x_n\in U_{ij}^\e$ for large $n$, which leads to a contradiction.

For all $n\ge\sigma_{m_0}$, let $\rho_n$
be the largest $k\le n$ such that $x_k\in U_{ij}(\eps)$. By
(\ref{RSest}), $$(y_{ij}^n-y_{ij}^{\rho_n+1})\ge
-\frac{\eps}{2}.$$
By Lemma \ref{2epsilon}, $x_{\rho_n+1}\in U_{ij}(2\eps)$ : let us assume for instance that $y_{ij}^{\rho_n+1}-H(x_{\rho_n+1})\ge-2\eps$. Together with
$|H(x_n)-H(x_{\rho_n+1})|\le \varepsilon/2$, as $\tau_{m_0}=\iy$, we
deduce that
\beqq
y_{ij}^n-H(x_n)\ge y_{ij}^{\rho_n+1}-H(x_{\rho_n+1})-\eps\ge -3\eps.
\eeqq
With a similar argument, $x_{ij}^{\rho_n+1}\le 2\eps$ implies
$x_{ij}^n\le 3\eps$. These two arguments together imply that $x_n\in U_{ij}(3\eps)$ if
$n\ge\sigma_{m_0}$, which enables us to conclude.

\section{Asymptotic behavior near the boundary}\label{sect_bord}


The results presented in Sections \ref{sect_eq} and \ref{sect_p3} concern respectively the characterisation of stable equilibria and convergence towards them on  $\De\setminus\partial\De$.

However, as we mentioned at the end of Section \ref{sect_th}, a full analysis of the asymptotics of the process $(x_n)_{n\in\N}$ would require a better understanding of the behavior of the dynamics close to the boundary $\partial\De$. 
This can be understood through the following simple example. 

Let $G$ be a star-shaped connected graph consisting of $k+1$ vertices $\V=\{v,u_1,...,u_k\}$, where $v$ is the core adjacent to all vertices $u_l$, $l=1\ldots k$, and any two vertices $u_l$ and $u_m$, $l\ne m$, are not adjacent. Assume w.l.o.g.~that the sequence $a_{vu_l}$, $l=1\ldots k$, is nonincreasing in $i$, and consider the case where $p_\V=1$.

Consider our process $((V_{ij}^n)_{i,j\in\V})_{n\in\N}$ on $G$. Then, for all $l=1\ldots k$, $V_{u_l,v}=V_{u_l}$, so that the probability to choose edge $\{u_l,v\}$ at time $n$ is $V_{u_l,v}^n/V_v^n$ and, in that case, $V_{u_l,v}^{n+1}=V_{u_l,v}^n+a_{vu_l}$. 

This corresponds to the so-called Friedman's urn with $k$ colors: when a ball of color $l$ is chosen, then it is put back into the urn, along with $a_{vu_l}$ other balls of the same color. Thus $V_{u_l,v}^n/n^{a_{vu_l}/a_{vu_1}}$ converges a.s. towards a positive r.v. (see for instance \cite{freedman}).

Assuming for instance that $a_{vu_l}<a_{vu_1}$ for all $l\ne1$, we deduce in particular that $x_{u_2}^n,...,x_{u_k}^n$ converge a.s.~to $0$, and that $x_{u_1}^n,x_v^n$ converge to $1/2$ a.s. Therefore $x_n$ converges to the boundary $\partial\De$.

Note that, for any general graph $G=(\V,E,\sim)$, and for all $i\in\V$ such that $a_{ij}p_{ij}>0$ for some $j\sim i$, the convergence to the boundary will occur at a rate at most $n^{\e-1}$ for some $\e>0$. In particular, this implies that any vertex $i\in\V$ such that $a_{ij}p_{ij}>0$, for some $j\sim i$, performs infinitely many successful communications almost surely. This is a consequence of \eqref{lbni} in Corollary \ref{cor_Tn}: the probability that $i$ receives a positive payoff at time $n+1$ will be at least 
$$x_{i}^nN_i(x_n)=x_{i}^n\sum_{j\in\V}a_{ij}p_{ij}\frac{x_{ij}^2}{(x_{i}^n)^2x_{j}^n}\ge h_{\min}w_{\min}x_i^n=h_{\min}w_{\min}\frac{V_i^n}{T_n}.$$
If we let $L_i^n=h_{\min}V_i^n$, then we can couple the process $(L_i^n)_{n\in\N}$ with a Friedman's urn with $2$ colors ($i$ and $i^c$), so that when a ball of color $i$ is chosen, then it is put back into the urn along with at least $h_{\min}w_{\min}$
other balls of color $i$, and such that in any case (either if $i$ or $i^c$ is chosen), at most $\Cst(a,h_{\min})$ balls of color $i^c$ are added.

Now $F$ is not continuous on the boundary $\partial \De$, so that  Lemma \ref{lem_StochApp} is not useful on the event that the trajectory of $x_n$ has accumulation points in $\partial \De$. However it is possible to write the evolution of our network as a stochastic approximation of solutions of a smooth ODE: if we let $z=((x_i)_{i\in\V}, (y_{ij})_{i,j\in\V})$, then the process $(z_n)_{n\in\N}$ can be seen as a Cauchy Euler approximation of the following ODE: 
\begin{align*}
\dot{x}_i&=x_i(N_i(x)-H(x)),\\
\dot{y}_{ij}&=G(y)_{ij}=y_{ij}(y_{ij}-N_i(x)-N_j(x)+H(x)),\\
N_i(x)&=\sum_{k\in\V}x_ky_{ik}^2.
\end{align*}
However the perturbation in the evolution of $y_{ij}^n$ in Lemma \ref{stochapp2} depends on $x_i$ and $x_j$, and the state space of the ODE on $z$ is not compact anymore.  

\section{Classification of equilibria and stability}\label{sect_eq}


In this section, we analyze the deterministic dynamics associated to the ODE~\eqref{ode}. In particular, we compute the Jacobian matrix of $F$ and give a characterization of the stable equilibria in $\Delta\setminus\partial\Delta$. Let us first do the following remark, which we will use several times.
\begin{rem}\label{remapx}
For any $x\in \Delta$, $H(x)$ is lower bounded by some positive constant, using \eqref{lbHx}. Recalling the definition \eqref{def_Gamma} of $\Gamma$, this implies that, for any $x\in\Gamma$, if  $a_{ij}p_{ij}=0$ then $x_{ij}=0$.
\end{rem}


\subsection{Properties of Lyapunov function} \label{SEC:4:7}


In this section, we show that $H$ is constant on each
connected component of $\Gamma$, defined in \eqref{def_Gamma}.

\begin{prop}
\label{constantH} $H$ is constant on each connected
component of $\Gamma$.
\end{prop}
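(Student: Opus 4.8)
The plan is to compute the gradient of $H$ on $\De\setminus\partial\De$, show that on the equilibrium set $\Gamma$ this gradient is supported only on the ``non-edge'' coordinates $\{ij:x_{ij}=0\}$, and then integrate $H$ along paths lying inside a connected component. First I would differentiate $H$, treating $x_{ij}$ and $x_{ji}$ as independent variables exactly as in the proof of Proposition~\ref{prop_Lya}; using $x_i=\sum_m x_{im}$ and evaluating at a symmetric point $x_{ik}=x_{ki}$, a short computation gives the clean identity
\[
\frac{\partial H}{\partial x_{ij}}(x)=2\big(y_{ij}-N_i(x)\big),
\]
with $y_{ij}$ and $N_i$ as in \eqref{def_y} and \eqref{def_N}. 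As a sanity check, contracting this with $F$ recovers $\nabla H\cdot F=2\sum_{i,j}x_{ij}(y_{ij}-N_i)^2=p(x)$ from Lemma~\ref{rec-dl}, since $\sum_j x_{ij}(y_{ij}-N_i)=x_iN_i-N_ix_i=0$.

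Next, fix $x\in\Gamma\cap(\De\setminus\partial\De)$. The equation $F(x)=0$ forces $y_{ij}=H(x)$ for every pair with $x_{ij}>0$, and summing these against $x_{ik}/x_i$ gives $N_i(x)=\sum_k (x_{ik}/x_i)y_{ik}=H(x)$ for every $i$ with $x_i>0$. Substituting into the gradient formula yields the key dichotomy
\[
\frac{\partial H}{\partial x_{ij}}(x)=\begin{cases}0,& x_{ij}>0,\\ -2H(x),& x_{ij}=0,\end{cases}
\]
so on $\Gamma$ the gradient $\nabla H$ is supported entirely on the coordinates $ij$ with $x_{ij}=0$.

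The second step is a connectedness argument. A connected component $\mathcal C$ of $\Gamma$ is a connected component of a semialgebraic set, hence semialgebraically path-connected, so two points of $\mathcal C$ inside $\De\setminus\partial\De$ can be joined by a piecewise-$C^1$ path $\gamma:[0,1]\to\mathcal C$. Along each smooth piece I would compute $\tfrac{d}{dt}H(\gamma(t))=\sum_{i,j}\tfrac{\partial H}{\partial x_{ij}}(\gamma(t))\,\dot\gamma_{ij}(t)$ and check that every summand vanishes: if $\gamma_{ij}(t)>0$ then $\partial H/\partial x_{ij}=0$ by the dichotomy, while if $\gamma_{ij}(t)=0$ then, since $\gamma_{ij}\ge0$ throughout and $\gamma_{ij}$ is differentiable, $t$ is an interior minimum of $\gamma_{ij}$, whence $\dot\gamma_{ij}(t)=0$. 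Thus $t\mapsto H(\gamma(t))$ has zero derivative on each piece and, being continuous, is constant, proving $H$ constant on $\mathcal C$.

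The main obstacle is the irregularity of $H$ on $\partial\De$: both the gradient identity and the equalities $N_i=H$ require $x_i>0$, so the argument as written controls only components contained in $\De\setminus\partial\De$. Components touching $\partial\De$ (where, by Remark~\ref{remapx}, some $x_i=0$ and $H$ need not even be continuous) would require the separate boundary analysis flagged in Section~\ref{sect_bord}; I expect this, together with the standard but worth-stating fact that connected components of the semialgebraic set $\Gamma$ are joinable by piecewise-smooth arcs, to be the delicate part of the write-up.
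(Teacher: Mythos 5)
Your interior computation is essentially right: with the paper's convention that $x_{ij}$ and $x_{ji}$ are independent variables one indeed gets $\partial H/\partial x_{ij}=2\bigl(y_{ij}-N_i(x)\bigr)$, at an equilibrium $N_i(x)=H(x)$ for every $i$ with $x_i>0$, and the ``interior minimum'' trick for coordinates with $\gamma_{ij}(t)=0$ is a clean way to avoid stratifying by support. But there is a genuine gap, and it is exactly the one you flag without filling: the proposition concerns all of $\Gamma$, including $\Gamma_b=\Gamma\cap\partial\Delta$ (Definition \ref{def_se}), and boundary equilibria are not a degenerate afterthought here --- they sit in the closure of the interior ones. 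For instance, for a star $\Gc$ as in Definition \ref{def_gammag}, letting one satellite weight $q_{ij}\to 0$ produces an equilibrium with $q_j=0$, lying in $\partial\Delta$ but in the same connected component of $\Gamma$ as $\Gamma_\Gc$. Since $\Delta\setminus\partial\Delta$ is relatively open, a connected component of $\Gamma$ need not have connected intersection with it, so your path argument does not even cover all pairs of interior points of a given component; and it cannot be pushed to $\partial\Delta$, because $y_{ij}$, $N_i$ and your gradient formula all lose meaning when $x_i=0$ and $H$ is not continuous there.

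The paper closes this gap by a different decomposition, and your write-up needs an analogue of it. It first fixes a support $\theta=E_x$ and shows (Lemma \ref{samesupp}) that $\partial H/\partial x_{ij}=0$ at $q\in\Gamma_\theta$ for every $ij\in\theta$; that computation involves only the coordinates $q_{kl}$ with $kl\in\theta$, hence only vertices with $q_k>0$, so it remains valid at boundary equilibria. This gives constancy of $H$ on connected components of each stratum $\Gamma_\theta$. It then proves (Lemma \ref{lem:cont}) that $H$ restricted to $\Gamma$ is continuous, including at $\Gamma_b$, via the identity $H(x)=\bigl(1-\sum_{ij\in E_x\setminus E_q}x_{ij}\bigr)^{-1}\sum_{ij\in E_q}a_{ij}p_{ij}\,x_{ij}^2/(x_ix_j)$ for $x\in\Gamma$ near $q\in\Gamma$, which uses $F(x)=0$; this continuity is what glues the strata of a component together. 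Either supply this second step (or an equivalent boundary analysis), or your argument proves only the restriction of the statement to components contained in $\Delta\setminus\partial\Delta$.
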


We first prove in Lemma
\ref{samesupp} that $H$ is constant on connected subsets
of $\Gamma$ with the same support (defined below) by a
differentiability argument. Then we show in Lemma \ref{lem:cont} that $H$ is continuous on $\G$ (including $\Gamma\cap\partial\De$), which enables us to conclude.
Let $\Theta$ be the set of subsets of $E$.
For any $x\in\Delta$, we define its support
\beq\label{def_Ex}
E_x:=\{ij\in  E:
 \, x_{ij}>0\}.
 \eeq
 $\Theta$ can be used as an index set to divide
$\Delta$ or $\Gamma$ into several subsets: for any $\theta
\in \Theta,$
\begin{align*}
  \Delta_{\theta}&:=\{x\in\Delta:\,E_x=\theta\},\\
  \Gamma_{\theta}&:=\Delta_{\theta}\,\cap\,\Gamma.
\end{align*}
\begin{lem}
\label{samesupp}
  For any $\theta\,\in\,\Theta$, $H$ is constant on each connected component of
  $\Gamma_{\theta}$.
\end{lem}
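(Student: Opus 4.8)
The plan is to fix the support $\theta\in\Theta$ and to show that the restriction of $H$ to $\Gamma_\theta$ has vanishing derivative in every direction tangent to $\Gamma_\theta$; local constancy together with connectedness then yields the claim. Every $x\in\Gamma_\theta$ has $x_{ij}>0$ exactly on the edges of $\theta$, so each endpoint of such an edge carries positive mass, and $H$ agrees, on a neighbourhood of $\Gamma_\theta$, with the smooth function $\sum_{ij\in\theta}a_{ij}p_{ij}x_{ij}^2/(x_ix_j)$. In particular $H$ is $C^1$ there, which is what makes the differentiability argument legitimate.

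The computational core is the identity
\[
\frac{\partial H}{\partial x_{ij}}(x)=2\,a_{ij}p_{ij}\frac{x_{ij}}{x_ix_j}-2\sum_{k\in\V}a_{ik}p_{ik}\frac{x_{ik}^2}{x_i^2x_k}=2\big(y_{ij}-N_i(x)\big),
\]
valid at symmetric $x$ with $x_i,x_j>0$, where $y_{ij}$ and $N_i$ are defined in \eqref{def_y} and \eqref{def_N}; the two equal contributions to the negative part arise because the variable $x_{ij}$ enters the denominator $x_i$ both through the terms indexed by $(i,\cdot)$ and, after using the symmetry $a_{ik}=a_{ki}$, $x_{ik}=x_{ki}$, through those indexed by $(\cdot,i)$. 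Now on $\Gamma$ one has $y_{ij}=H(x)$ for every edge $ij\in\theta$ by \eqref{def_Gamma}; averaging this over the active neighbours gives $N_i(x)=\sum_{k:\,x_{ik}>0}(x_{ik}/x_i)\,y_{ik}=H(x)\sum_k x_{ik}/x_i=H(x)$ for each active vertex $i$. Hence $\partial H/\partial x_{ij}=2\big(H(x)-H(x)\big)=0$ for every $ij\in\theta$.

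Finally, any tangent vector to $\Gamma_\theta$ (the velocity of a path keeping the support equal to $\theta$) has zero components on the frozen coordinates $ij\notin\theta$, so along such a path $\tfrac{d}{dt}H=\sum_{ij\in\theta}(\partial H/\partial x_{ij})\dot x_{ij}=0$. Thus $H$ is constant along every path contained in $\Gamma_\theta$. Since $\Gamma_\theta$ is semialgebraic, being cut out by the polynomial equalities $a_{ij}p_{ij}x_{ij}=H(x)x_ix_j$ for $ij\in\theta$, the equalities $x_{ij}=0$ for $ij\notin\theta$, the normalisation $\sum_{ij}x_{ij}=1$, and the sign conditions, its connected components are path-connected, and $H$ is therefore constant on each of them.

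I expect the delicate points to be the gradient computation, where one must correctly account for the factor $2$ coming from the fact that $x_i$ appears in the denominators attached to both orientations of the edges at $i$, and the observation that at an equilibrium $N_i(x)=H(x)$, so that $\nabla H$ vanishes precisely along the active coordinates. The passage from a vanishing tangential derivative to constancy on connected components is then a routine topological step once semialgebraicity, hence local path-connectedness, of $\Gamma_\theta$ is noted.
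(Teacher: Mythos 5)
Your proof is correct and follows essentially the same route as the paper: differentiate $H$ in the active coordinates $ij\in\theta$ and use the equilibrium relation $y_{ij}=H(x)$ (hence $N_i(x)=H(x)$) to conclude that the gradient vanishes on $\Gamma_\theta$. Your identity $\partial H/\partial x_{ij}=2\big(y_{ij}-N_i(x)\big)$ is the paper's computation left in unsymmetrized form, and your semialgebraicity remark merely supplies the path-connectedness step that the paper leaves implicit.
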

\begin{proof}
Given $q\in\Gamma_\theta$, let us differentiate $H$ at $q$ with respect to $x_{ij}=x_{ji},\,ij\in
\theta$ (i.e. $q_{ij}>0$ and thus $a_{ij}p_{ij}>0$ by Remark \ref{remapx}) without the constraint $x\in\Delta$:
\beqq
\left[\frac{\partial H}{\partial x_{ij}}(x)\right]_{x=q}&=&\left[\sum_{k,l:kl\in \theta} \frac{\partial}{\partial x_{ij}}\bigg( a_{kl}p_{kl} \frac{x_{kl}^2}{x_kx_l}\bigg)\right]_{x=q}\\
&=&\left[2\frac{\partial}{\partial x_{ij}} \bigg(a_{ij}p_{ij}\frac{x_{ij}^2}{x_ix_j}\bigg)+2\sum_{k\ne j, ik\in \theta} \frac{\partial}{\partial x_{ij}} \bigg(a_{ik}p_{ik}\frac{x_{ik}^2}{x_ix_k}\bigg)\right.\\
&&\left.+2\sum_{k\ne i, jk\in \theta} \frac{\partial}{\partial x_{ij}} \bigg(a_{jk}p_{jk}\frac{x_{jk}^2}{x_jx_k}\bigg)\right]_{x=q}\\
&=& 2a_{ij}p_{ij}\frac{q_{ij}}{q_iq_j}\bigg(2 -\frac{q_{ij}}{q_i} -\frac{q_{ij}}{q_j}\bigg)\\
&&-2\sum_{k\ne j:ik\in \theta} a_{ik}p_{ik} \frac{q_{ik}}{q_iq_k}\cdot \frac{q_{ik}}{q_i}-2\sum_{k\ne i:jk\in \theta} a_{jk}p_{jk}\frac{q_{jk}}{q_jq_k}\cdot\frac{q_{jk}}{q_j}\\
&=& 2H(q)\bigg(2 -\frac{q_{ij}}{q_i} -\frac{q_{ij}}{q_j}\bigg)-2H(q)\bigg(1-\frac{q_{ij}}{q_i}\bigg)-2H(q)\bigg(1-\frac{q_{ij}}{q_j}\bigg)\\
&=&0.
\eeqq
The penultimate equality comes from the fact that, for all $r$, $s$ $\in\V$, $rs\in\theta$ (hence $q_{rs},a_{rs}p_{rs}>0$, since $q\in\G_\theta$), we have 
$(a_{rs}p_{rs}q_{rs})/(q_rq_s)= H(q)$.
\end{proof}

\begin{lem} \label{lem:cont} $H$ is continuous on $\Gamma$.
\end{lem}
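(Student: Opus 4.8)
The plan is to prove sequential continuity: given $x\in\Gamma$ and a sequence $(x^{(n)})_n\subseteq\Gamma$ with $x^{(n)}\to x$, I will show $H(x^{(n)})\to H(x)$. The key observation is that on $\Gamma$ the expected payoff is \emph{read off from any single active edge}. Indeed, $x\in\Gamma$ means $F(x)=0$, and by the definition~\eqref{deff} of $F$ this forces $y_{ij}=a_{ij}p_{ij}x_{ij}/(x_ix_j)=H(x)$ for every edge $ij$ with $x_{ij}>0$ (cf.~Remark~\ref{rem1}). Thus, to control $H(x^{(n)})$ it suffices to track one well-chosen edge along the sequence, which sidesteps the discontinuity of the individual summands of $H$ near $\partial\Delta$.

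First I would fix a reference edge. Since $x\in\Delta$ satisfies $\sum_{ij:a_{ij}p_{ij}>0}x_{ij}\ge h_1>0$ (see~\eqref{def_delta}), there exists an edge $i_0j_0$ with $a_{i_0j_0}p_{i_0j_0}>0$ and $x_{i_0j_0}>0$; moreover $x_{i_0}\ge x_{i_0j_0}>0$ and $x_{j_0}\ge x_{i_0j_0}>0$. This edge stays away from degeneracy: as all finitely many coordinates converge, $x^{(n)}_{i_0j_0}\to x_{i_0j_0}>0$, $x^{(n)}_{i_0}\to x_{i_0}>0$ and $x^{(n)}_{j_0}\to x_{j_0}>0$, so for all large $n$ we have $x^{(n)}_{i_0j_0}>0$.

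Then I would simply evaluate $H$ on the reference edge. For large $n$, because $x^{(n)}\in\Gamma$ and $x^{(n)}_{i_0j_0}>0$, the equilibrium relation gives $H(x^{(n)})=a_{i_0j_0}p_{i_0j_0}x^{(n)}_{i_0j_0}/(x^{(n)}_{i_0}x^{(n)}_{j_0})$. The right-hand side is a ratio whose numerator and denominator converge with the denominator bounded away from $0$, hence it converges to $a_{i_0j_0}p_{i_0j_0}x_{i_0j_0}/(x_{i_0}x_{j_0})$, which equals $H(x)$ because $x\in\Gamma$ and $x_{i_0j_0}>0$. Therefore $H(x^{(n)})\to H(x)$, the desired continuity. (Boundedness of $H$ on $\Gamma$, from the trivial upper bound and the lower bound~\eqref{lbHx}, is not even needed here, though it confirms that no limit escapes.)

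The only genuine subtlety---the point the statement stresses by including $\Gamma\cap\partial\Delta$---is that $x$ may lie on $\partial\Delta$, so that other vertices fall out of use and many summands of $H$ are truly singular at $x$. The argument avoids this entirely by \emph{never summing}: it localises to the single nondegenerate edge $i_0j_0$, on which $H$ is expressed through the manifestly continuous map $x'\mapsto a_{i_0j_0}p_{i_0j_0}x'_{i_0j_0}/(x'_{i_0}x'_{j_0})$ near $x$. I expect no further obstacle; the one thing to verify carefully is that the endpoints of the chosen edge carry strictly positive mass in the limit, which follows at once from $x_{i_0},x_{j_0}\ge x_{i_0j_0}>0$.
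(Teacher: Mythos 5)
Your proof is correct, and it rests on the same structural fact as the paper's argument --- at an equilibrium, the efficiency $y_{ij}=a_{ij}p_{ij}x_{ij}/(x_ix_j)$ equals $H(x)$ on every active edge (Remark~\ref{rem1}) --- but the execution is genuinely different and somewhat slicker. The paper takes $x\in\Gamma$ in a neighbourhood of $q\in\Gamma$, writes out the full sum defining $H(x)$, splits it over $E_q$ and $E_x\setminus E_q$, replaces each term of the second sum by $x_{ij}H(x)$ via the equilibrium relation, and then solves for $H(x)$, obtaining an expression that is manifestly continuous in $x$ near $q$ and tends to $H(q)$. You avoid all summation and all bookkeeping about the inclusion $E_x\supseteq E_q$: you fix a single edge $i_0j_0$ in the support of the limit point (which exists thanks to the constraint $\sum_{ij:a_{ij}p_{ij}>0}x_{ij}\ge h_1>0$ built into \eqref{def_delta}), observe that its endpoints carry mass at least $x_{i_0j_0}>0$ so the map $x'\mapsto a_{i_0j_0}p_{i_0j_0}x'_{i_0j_0}/(x'_{i_0}x'_{j_0})$ is continuous there, and read $H$ off that one edge along the sequence. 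What the paper's version buys is an explicit continuous formula for $H$ valid on all of $\Gamma$ near $q$; what yours buys is brevity, and since $\Delta$ is a metric space, sequential continuity is all that is needed. The only point I would make explicit is that your blanket claim ``$F(x)=0$ forces $y_{ij}=H(x)$ for every edge with $x_{ij}>0$'' implicitly uses $H>0$ (see \eqref{lbHx} and Remark~\ref{remapx}) to exclude active edges with $a_{ij}p_{ij}=0$; for the particular edge you work with this is moot, since you chose it with $a_{i_0j_0}p_{i_0j_0}>0$.
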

\begin{proof} Suppose that $q\in \Gamma$,
and  that $x\in \Gamma$ is in the neighbourhood of $q\in \Gamma$ within
$\Delta$, then $E_x \supseteq E_q$ and, using $x\in \Gamma$,
\begin{align*}
 H(x)=\sum_{i,j:ij\in E_q} a_{ij}p_{ij}\frac{x_{ij}^2}{x_ix_j}+\sum_{i,j:ij\in E_x\setminus E_q}x_{ij}H(x),
\end{align*}
so that
$$H(x)=\frac{1}{1-\sum_{i,j:ij\in E_x\setminus E_q} x_{ij}}\sum_{i,j:ij\in E_q}a_{ij}p_{ij} \frac{x_{ij}^2}{x_ix_j},$$
and the conclusion follows.
\end{proof}


\subsection{Jacobian matrix} \label{SEC:4:9}


At any equilibrium $x\in(\Delta\setminus\partial\Delta)\cap \Gamma$ ($H$ and $F$ are not differentiable on $\partial\De$), we
compute the Jacobian matrix
$$J(x)=\bigg(\frac{\partial F_{lk}}{\partial x_{ij}}\bigg)_{\{i,j\}, \{l,k\}:ij,lk\in  E},$$
where, by a slight abuse of notation,
$$F(x)=(F_{ij}(x))_{\{i,j\}:ij\in  E}.$$

For all $ij$, $kl$ $\in  E$, a simple extension of the calculation in the proof of Lemma \ref{samesupp} yields
$\frac{\partial\, H}{\partial x_{ij}}(x)=4H(x)(\1_{\{x_{ij}\neq 0\}}-1)$, so that
\beqq
\frac{\partial F_{lk}}{\partial x_{ij}}(x)&=&-H(x)\1_{\{\{i,j\}=\{l,k\},x_{lk}=0\}}+x_{lk}\frac{\partial y_{lk}}{\partial x_{ij}}(x)- x_{lk} \frac{\partial H}{\partial x_{ij}}(x)\\
&=& -H(x)\1_{\{\{i,j\}=\{l,k\},x_{lk}=0\}}+H(x)\1_{\{\{i,j\}=\{l,k\},x_{lk}\neq 0\}} -\frac{x_{lk}}{x_i}H(x)\1_{\{i\in\{l,k\}\}}\\
&&-\frac{x_{lk}}{x_j}H(x)\1_{\{j\in\{l,k\}\}}-4x_{lk}H(x)\big(\1_{\{x_{ij}\neq 0\}}-1\big).
\eeqq
Recalling Remark \ref{remapx}, note that the last computation holds for $a_{ij}p_{ij}=0$ or $a_{kl}p_{kl}=0$.

Therefore, for $x_{ij}\ne0$, we deduce
\beqq
\frac{\partial F_{ij}}{\partial x_{ij}}&=& H(x)\bigg[ 1-\frac{x_{ij}}{x_i}-\frac{x_{ij}}{x_j}\bigg];\\
\frac{\partial F_{ik}}{\partial x_{ij}}&=& -\frac{x_{ik}}{x_i} H(x), k\ne j;\\
\frac{\partial F_{jk}}{\partial x_{ij}}&=& -\frac{x_{jk}}{x_j}H(x), k\ne i;\\
\frac{\partial F_{lk}}{\partial x_{ij}}&=& 0 , l\ne i,j; k\ne i,j.
\eeqq
For any $ij\in E$ s.t. $x_{ij}=0$, we have:
\beqq
\frac{\partial F_{ij}}{\partial x_{ij}}&=& -H(x)\\
\frac{\partial F_{lk}}{\partial x_{ij}}&=&0 , l\ne i,j; k\ne i,j; x_{lk}=0.
\eeqq

 Let $\mathcal{C}_1, \ldots,\mathcal{C}_d$ be the connected components of  the subgraph $G_x:=(\V,E_x)$, where $E_x$ is defined in~\eqref{def_Ex}.
 Let
 $$J_x^m:=\left(\frac{\partial\,F_{lk}}{\partial
x_{ij}}\right)_{ij,kl\in \mathcal{C}_m}.$$
Therefore, 
$J_x$ can be written as follows, by putting first $ij$ and $kl$ coordinates such that $x_{ij}\ne0$ and $x_{lk}\ne0$ (in the same order, with increasing connected components $\mathcal{C}_1, \ldots,\mathcal{C}_d$)
$$J(x)= \begin{pmatrix}
J_x^1&&  &&&&  & & & \\
 && \ddots && &&& (*) & & \\
&  && &J_x^d &  & &&& \\
&  && & &  & &&&\\
& & & &&&&-H(x)& &\\
& &&& (0)&&& & \ddots& \\
& & &&& & &&& -H(x)
\end{pmatrix}.$$


\subsection{Classification of equilibria based on stability} \label{SEC:4:10}


Let us introduce some definitions on stability for
ordinary differential equations.
\begin{definition}
$x$ is \textnormal{Lyapunov stable} if for any
neighbourhood \textnormal{$U_1$} of $x$, there exists a neighbourhood
\textnormal{$U_2 \subseteq U_1$} of $x$ such that any solution
\textnormal{$x(t)$} starting in \textnormal{$U_2$ } is such that \textnormal{$x(t)$} remains in \textnormal{$U_1$} for all
\textnormal{$t\ge 0$}.
\end{definition}

\begin{definition}
$x$ is \textnormal{asymptotically stable} if it is \textnormal{Lyapunov stable} and there
exists a neighbourhood $U_1$ such that any solution $x(t)$
starting in $U_1$ is such that $x(t)$ converges to $x$.
\end{definition}

An equilibrium that is Lyapunov stable but not asymptotically stable
is   sometimes called $neutrally$ $stable$.

\begin{definition}\label{defstab}
 $x$ is \textnormal{linearly stable}  if all eigenvalues of the Jacobian matrix
 at $x$ have nonpositive real part; otherwise, $x$ is called \textnormal{linearly unstable}.
\end{definition}

Remark that, with these definitions, linear stability allows for eigenvalues to have zero real part, and therefore does not necessarily imply Lyapunov stability. However the dynamics considered here makes these stable equilibria indeed Lyapunov stable: as in~\cite{HST}, as we will observe in Section~\ref{sect_p3} (in a stochastic version).

\begin{definition}\label{def_se}
Let
\begin{align*}
  \Gamma_0&\,:=\,\Gamma\,\cap\,\Delta\setminus\partial\Delta,\\
  \Gamma_b&\,:=\,\Gamma\,\cap\,\partial\Delta,
\end{align*}
and let $\Gamma_s$ (resp. $\Gamma_u$) be the set of linearly stable (resp. unstable) equilibria in
$\Gamma_0$ for the mean-field ODE.
\end{definition}

For any $x \in \Gamma_u$, let
$$\mathcal{E}_x:=\{\theta\in\mathbb{R}^{| E|}\,:\,|\theta|=1 \textnormal{ and } \exists \,ij\in E_x
\textnormal{ s.t. }\theta\cdot \mathbf{e}_{ij}\ne0\},$$
where $E_x$ is defined in~\eqref{def_Ex}.

Recall Definition~\ref{def_Gx} of a graph $G_x$ associated to some $x\in\Delta$.

\begin{prop}
\label{Prop:unstable}
We have
\begin{enumerate}
\item [(a)] $\Gamma_s=\{x\in \Gamma_0 : P_{G_x}\text{ holds}\}$.\\
\item [(b)] If $x\in \Gamma_u$, then there exists an eigenvector in $\mathcal{E}_x$ whose
eigenvalue has positive real part.\\
\end{enumerate}
\end{prop}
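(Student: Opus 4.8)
The plan is to read off the spectrum of $J(x)$ from the block-triangular form displayed above and then analyse each diagonal block $J_x^m$ separately. Since the lower-right block equals $-H(x)\operatorname{Id}$ with $H(x)>0$ (by \eqref{lbHx}), the block-triangular structure yields
$$\operatorname{spec} J(x)=\Big(\bigcup_{m=1}^{d}\operatorname{spec} J_x^m\Big)\cup\{-H(x)\},$$
the last value repeated once for each edge $ij\in E$ with $x_{ij}=0$. Thus every eigenvalue of positive real part is an eigenvalue of some block $J_x^m$. This gives $(b)$ at once: if $\lambda$ is such an eigenvalue with block eigenvector $v$ (supported on the edges of $\mathcal C_m\subseteq G_x$), then since $\lambda\neq -H(x)$ the vector $\binom{v}{0}$ is an eigenvector of $J(x)$ supported on $E_x$, hence lies after normalisation in $\mathcal E_x$. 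For $(a)$ it then remains to decide, component by component, whether $J_x^m$ has an eigenvalue of positive real part.

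The key step is to observe that each block is self-adjoint for a weighted inner product. Writing $J_x^m=H(x)\,M$ with $M$ acting on functions $\theta=(\theta_e)_e$ on the edges of $\mathcal C_m$, the entries computed above give $(M\theta)_{ij}=\theta_{ij}-\tfrac{x_{ij}}{x_i}\Phi_i(\theta)-\tfrac{x_{ij}}{x_j}\Phi_j(\theta)$, where $\Phi_v(\theta)=\sum_{e\ni v}\theta_e$. A direct check then shows that $M$ is self-adjoint for $\langle\theta,\theta'\rangle_\ast:=\sum_{e}\theta_e\theta'_e/x_e$ and that
$$\langle M\theta,\theta\rangle_\ast=\sum_{e}\frac{\theta_e^{2}}{x_e}-\sum_{v}\frac{\Phi_v(\theta)^{2}}{x_v}.$$
In particular all eigenvalues of $J_x^m$ are real, and $J_x^m$ has a positive eigenvalue if and only if $\langle M\theta,\theta\rangle_\ast>0$ for some $\theta$.

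With this criterion both implications of $(a)$ are short. If $\mathcal C_m$ is a star with centre $i_0$ and leaves $j_l$ (so $x_{j_l}=x_{\{i_0,j_l\}}$), the displayed quadratic form collapses to $\langle M\theta,\theta\rangle_\ast=-\big(\sum_l\theta_{\{i_0,j_l\}}\big)^2/x_{i_0}\le 0$ for every $\theta$, so all eigenvalues are nonpositive and $x$ is linearly stable. Conversely, suppose $\mathcal C_m$ is not a star; being connected it contains two adjacent vertices $b,c$ of degree $\ge 2$ (take the first edge of a shortest path joining two vertices of degree $\ge2$). Pick edges $e_b=\{b,a\}\neq\{b,c\}$ and $e_c=\{c,d\}\neq\{b,c\}$ and test with $\theta_{e_b}=\theta_{e_c}=1$, $\theta_{bc}=-1$, $\theta_e=0$ otherwise. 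This makes $\Phi_b(\theta)=\Phi_c(\theta)=0$, and using $x_a\ge x_{e_b}$, $x_d\ge x_{e_c}$ (and $\tfrac1{x_{e_b}}+\tfrac1{x_{e_c}}\ge \tfrac4{x_{e_b}+x_{e_c}}\ge \tfrac4{x_a}$ in the degenerate case $a=d$) one finds $\langle M\theta,\theta\rangle_\ast\ge 1/x_{bc}>0$, so $J_x^m$ has a positive eigenvalue and $x$ is unstable. Finally, for $x\in\Gamma_0$ the remaining parts of $P_{G_x}$ are automatic: condition $(3)$ is exactly $x\notin\partial\Delta$, and on a star component the equilibrium relations $y_e=H(x)$ force $a_{i_0 j_l}p_{i_0 j_l}=H(x)x_{i_0}$ for every leaf, which is the balance condition $(1)$. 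Hence for $x\in\Gamma_0$, linear stability $\iff$ every component of $G_x$ is a star $\iff P_{G_x}$, giving $(a)$.

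The main obstacle is the instability direction: computing $\operatorname{spec} J_x^m$ directly is unwieldy for a general non-star component, and a naive perturbation (for instance one concentrated on the single edge $\{b,c\}$) need not make the Rayleigh quotient positive. The two decisive points are therefore the self-adjoint reformulation, which simultaneously guarantees a real spectrum and reduces the problem to one scalar inequality, and the choice of test vector that annihilates $\Phi_b$ and $\Phi_c$ so that only the manifestly positive term $1/x_{bc}$ survives.
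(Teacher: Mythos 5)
Your proposal is correct, and while its skeleton matches the paper's (block-triangular Jacobian, hence reduction to the diagonal blocks $J_x^m$; factorisation of each block as a positive diagonal matrix times a symmetric one, which is exactly your self-adjointness for $\langle\cdot,\cdot\rangle_\ast$ --- your quadratic form $\sum_e\theta_e^2/x_e-\sum_v\Phi_v(\theta)^2/x_v$ is precisely $\theta^T M^m\theta$ for the paper's symmetric matrix $M^m$), the instability direction is handled by a genuinely different and more economical argument. The paper needs three steps: Lemma \ref{aij=aik} first shows that stability forces the balance condition $a_{ij}p_{ij}=a_{ik}p_{ik}$ on each component, via a test vector on two edges sharing a vertex whose positivity hinges on the two weights being unequal; Lemma \ref{Pdnh} then shows that, \emph{given} the balance condition, a non-star component must have minimum degree $2$; and finally a trace computation, $\mathrm{Tr}(J_x^1)=H(x)(|E(\mathcal{C}_1)|-|V(\mathcal{C}_1)|)\ge 0$ together with the known eigenvalue $-H(x)<0$, produces an eigenvalue of positive real part. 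Your single test vector $\theta_{e_b}=\theta_{e_c}=1$, $\theta_{bc}=-1$, chosen so that $\Phi_b=\Phi_c=0$, replaces all three steps: it proves instability of \emph{any} non-star component without knowing the balance condition in advance, and the balance condition is then recovered for free from the equilibrium relations $y_e=H(x)$ on star components (where $x_{j_l}=x_{i_0j_l}$ for each leaf). What the paper's route buys is the explicit necessary condition of Lemma \ref{aij=aik} as a standalone statement; what yours buys is brevity, an explicit unstable direction rather than a trace estimate, and the observation that the block spectra are real. Your treatment of the degenerate case $a=d$, of the graph-theoretic fact that a connected non-star component has two adjacent vertices of degree at least two, and of the stable (star) direction via negative semidefiniteness instead of the paper's explicit rank-one block, are all sound. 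The only cosmetic slips are that $\binom{v}{0}$ is an eigenvector of $J(x)$ because the lower-left block vanishes, not because $\lambda\neq-H(x)$, and that the "only if" half of condition $(3)$ uses Remark \ref{remapx} rather than $x\notin\partial\Delta$ alone; neither affects the argument.
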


We are first going to prove following Lemma \ref{aij=aik} giving a necessary condition on the elements of $\Gamma_s$. Its proof is widely inspired by that of a similar result in \cite{BenTarr}.

\begin{lem} \label{aij=aik}
If $x$ is in $\Gamma_s$, then for all $i,j,k$ and $l$ in the same connected component of $G_x$ such that $i\simx j$, $k\simx l$, we have:
$$a_{ij}p_{ij}=a_{kl}p_{kl}.$$
Let us denote $\mathcal{P}_a$ this last property.
\end{lem}

\begin{proof}
Choose $x\in(\Delta\setminus\partial\Delta)\cap\G$.
It is sufficient to prove that, if $x_{i_0j_0}>0$ and $x_{i_0k_0}>0$, then $a_{i_0j_0}p_{i_0j_0}=a_{i_0k_0}p_{i_0k_0}$. 

Assume the contrary, for some $i_0$, $j_0$, $k_0$. Then, in particular, 
$$a_{i_0j_0}p_{i_0j_0}\frac{x_{i_0j_0}}{x_{i_0}x_{j_0}}=H(x).$$

Recall that   the entries of the Jacobian matrix $J(x)$, computed in Section \ref{SEC:4:9}, are such that, on the line corresponding to $ij$, we have:
\beqq
\frac{\partial F_{ij}}{\partial x_{ij}}&=&H(x)\big[1-\frac{x_{ij}}{x_i}-\frac{x_{ij}}{x_j}\big].\\
\frac{\partial F_{ij}}{\partial x_{ik}}&=&-\frac{x_{ij}}{x_i}H(x), k\ne j,\\
\frac{\partial F_{ij}}{\partial x_{jk}}&=&-\frac{x_{ij}}{x_j}H(x), k\ne i,\\
\frac{\partial F_{ij}}{\partial x_{kl}}&=&0, \{k,l\}\cap\{i, j\}=\emptyset.
\eeqq
Recall also that $J(x)$ is block upper triangular with on its diagonal sub-matrices $J^m_x$, $1\le m \le d$. Hence, the eigenvalues of $J^m_x$, $1\le m \le d$, are eigenvalues of $J(x)$. Moreover, each $J^m_x$ corresponds to the edges within the same connected component of the graph, where we called $\mathcal{C}_m$ this connected component. This means that for any coordinate $kl$ of $J^m_x$, we have $kl\in\mathcal{C}_m$ and $x_{kl}>0$.\\
Let us first prove that the matrices $J^m_x$ can be written as the product of a diagonal matrix and a symmetric matrix. Define the diagonal matrix
\[
D^m:=\left(x_{ij}H(x)\1_{\{ij=kl\}}\right)_{ij,kl\in\mathcal{C}_m}.
\]
Besides, define the symmetric matrix $M^m$, with entries of the coordinates $(ij,kl)$, $ij,kl\in\mathcal{C}_m$,  given by
\beqq
M^m_{ij,ij}&=&\frac{1}{x_{ij}}-\frac{1}{x_i}-\frac{1}{x_j},\\
M^m_{ij,ik}&=&-\frac{1}{x_i},\ k\neq j,\\
M^m_{ij,jk}&=&-\frac{1}{x_j},\ k\neq i,\\
M^m_{ij,kl}&=&0,\ \{i,j\}\cap\{k,l\}=\emptyset.
\eeqq
It is then straightforward to check that $J^m_x=D^mM^m$.\\

Now, the eigenvalues of $J^m_x$ are all nonpositive if and only if the eigenvalues of $M^m$ are all nonpositive. To have a proof of this fact, see the Claim in the proof of Lemma 1 in \cite{BenTarr}, p. 2199. Therefore, let us prove that $M^m$ has a positive eigenvalue, which will enable us to conclude.\\

Let $u$ be a vector in $\mathbb{R}^{|  \mathcal{C}_m |}$, such that:
$$u^T=(0, ..., 0,  u_{i_0j_0}, 0 ..., 0, u_{i_0k_0}, 0, ..., 0).$$
Dropping the $0$ of $i_0$, $j_0$ and $k_0$ for simplicity, we have:
\beqq
\big(M^m\cdot u\big)_{ij}&=& u_{ij}\big[\frac{1}{x_{ij}}-\frac{1}{x_i}-\frac{1}{x_j}\big]-u_{ik}\frac{1}{x_i}\\
\big(M^m\cdot u\big)_{ik}&=& u_{ik}\big[\frac{1}{x_{ik}}-\frac{1}{x_i}-\frac{1}{x_k}\big]-u_{ij}\frac{1}{x_i},
\eeqq
hence
\beqq
u^TM^mu&=& u_{ij}^2\big[\frac{1}{x_{ij}}-\frac{1}{x_i}-\frac{1}{x_j}\big]+u_{ik}^2\big[\frac{1}{x_{ik}}-\frac{1}{x_i}-\frac{1}{x_k}\big]\\
&&-u_{ij}u_{ik}\big(\frac{1}{x_i}+\frac{1}{x_i}\big).
\eeqq

Then, we chose $u_{ij}=1$ and $u_{ik}=-1$. So, we have:
\beqq
u^TM^mu&=& \frac{1}{x_{ij}}\big[1-\frac{x_{ij}}{x_j}\big]+\frac{1}{x_{ik}}\big[1-\frac{x_{ik}}{x_k}\big].
\eeqq
Recall that $\frac{x_{ij}}{x_j}\leq 1$ and $\frac{x_{ik}}{x_k}\leq 1$, and notice that $\frac{x_{ij}}{x_j}\neq 1$ or $\frac{x_{ik}}{x_k}\neq 1$. Indeed, if $\frac{x_{ij}}{x_j}=\frac{x_{ik}}{x_k}=1$, as $a_{ij}p_{ij}\frac{x_{ij}}{x_j}=a_{ik}p_{ik}\frac{x_{ik}}{x_k}$ (we are on the set of equilibria), then we would have $a_{ij}p_{ij}=a_{ik}p_{ik}$, which is not the case, by assumption.\\
Hence $u^TM^mu>0$ and therefore $M^m$ has a positive eigenvalue, as this matrix is symmetric. Finally, this implies that $J(x)$ has a positive eigenvalue and therefore $x$ is not stable.
\end{proof}

Lemma \ref{aij=aik} implies that $\Gamma_s\subseteq\{x\in \Gamma_0:\mathcal{P}_a \text{ holds}\}=:\Gamma_0^a$.\\
To prove Proposition \ref{Prop:unstable}, we need the following  Lemma \ref{Pdnh} on the structure of ${G}_x$ when $x\in \Gamma_0^a$.

\begin{lem} \label{Pdnh} 
For all $x\in\Gamma_0^a$ such that ${P}_{{G}_x}$ does not hold, then there is at least one connected component on which every vertex has at least two edges.
\end{lem}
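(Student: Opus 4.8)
The plan is to reduce the statement to a clean combinatorial fact about the equilibrium and then prove its contrapositive. Since $x\in\Gamma_0^a$, property $\mathcal{P}_a$ (Lemma~\ref{aij=aik}) is exactly condition $(1)$ of $P_{G_x}$, and condition $(3)$ holds automatically because $x\in\Delta\setminus\partial\Delta$. Hence $P_{G_x}$ can fail only through condition $(2)$: there is a connected component $\mathcal{C}_m$ of $G_x$ containing at least two vertices with several neighbours, and in particular $\mathcal{C}_m$ is not a star. It therefore suffices to show that any connected component of $G_x$ that is not a star has minimum degree at least two; equivalently, I would prove the contrapositive, namely that \emph{any component containing a vertex of degree one is a star}.

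First I would establish a balance identity on each component. Fix a component $\mathcal{C}_m$ and let $w_m>0$ denote the common value of $a_{ij}p_{ij}$ over its edges, which exists by $\mathcal{P}_a$. Since $x$ is an equilibrium, $F(x)=0$ forces $y_{ij}=H(x)$ for every edge $ij$ of $G_x$ (Remark~\ref{rem1}), that is $x_{ij}=(H(x)/w_m)\,x_ix_j$. Summing over the neighbours of a vertex $i\in\mathcal{C}_m$ and dividing by $x_i>0$ yields
\begin{equation}\label{star-balance}
\sum_{j\simx i}x_j=\frac{w_m}{H(x)}=:s_m \qquad \text{for every } i\in\mathcal{C}_m,
\end{equation}
where $s_m>0$ is well defined because $H(x)>0$ by~\eqref{lbHx} and Remark~\ref{remapx}. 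Thus the weighted degree $\sum_{j\simx i}x_j$ is the \emph{same} constant $s_m$ at every vertex of the component.

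Then I would exploit~\eqref{star-balance} to force the star structure. Suppose $\mathcal{C}_m$ contains a vertex $u$ of degree one, with unique neighbour $v$. Applying~\eqref{star-balance} at $u$ gives immediately $x_v=s_m$. Now take any neighbour $w$ of $v$: since $v\simx w$, the sum in~\eqref{star-balance} written at $w$ already contains the single term $x_v=s_m$, and all summands are positive, so $w$ can have no neighbour other than $v$. Hence every neighbour of $v$ has degree one, which means $\mathcal{C}_m$ is a star centered at $v$ (the two-vertex and isolated-vertex cases being stars as well). This proves the contrapositive, and with the reduction above the lemma follows, since the non-star component produced by the failure of condition $(2)$ then has no degree-one vertex and hence minimum degree at least two.

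The main obstacle is the derivation of the balance identity~\eqref{star-balance}: it is the single place where the equilibrium condition and $\mathcal{P}_a$ are combined, and it is what converts a purely graph-theoretic claim into an arithmetic one. Once~\eqref{star-balance} is in hand, the observation that one degree-one vertex propagates the value $s_m$ to its neighbour and thereby forces all further neighbours to be leaves is short; the only care needed is to treat the trivial components (single vertices, coming from sites with no positive-affinity edge, and two-vertex components) so that they are correctly counted as stars and do not interfere with the reduction.
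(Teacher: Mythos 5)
Your proof is correct and follows essentially the same route as the paper: both arguments prove that any component of $G_x$ containing a degree-one vertex must be a star, by combining the equilibrium identity $y_{ij}=H(x)$ on edges of $G_x$ with the affinity balance $\mathcal{P}_a$. The paper phrases the key step locally as $x_{ik}/x_k=x_{ij}/x_j=1$ for all neighbours $k$ of the leaf's neighbour, whereas you package the same computation as the constant weighted-degree identity $\sum_{j\simx i}x_j=w_m/H(x)$; this is a cosmetic rather than substantive difference.
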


\begin{proof}
Assume that $x$ is in $\Gamma_0^a$ and ${P}_{G_x}$ does not hold. Assume by contradiction that, on any connected component,  there exists a vertex $j$ linked to only one vertex, say $i$. So, we have $a_{ij}p_{ij}>0$ by Remark \ref{remapx}, and $\frac{x_{ij}}{x_j}=1$. Then, as $x\in\Gamma_0^a$, for all $k$ such that $x_{ik}>0$, we have $a_{ik}p_{ik}=a_{ij}p_{ij}$ and
$$y_{ik}=a_{ik}p_{ik}\frac{x_{ik}}{x_ix_k}=y_{ij}=a_{ij}p_{ij}\frac{x_{ij}}{x_ix_j} \Rightarrow \frac{x_{ik}}{x_k}=1,$$
then $k$ is only linked to $i$, and therefore each connected component of ${G}_x$ is star-shaped, which contradicts the assumption that ${P}_{{G}_x}$ does not hold, and allows us to conclude.
\end{proof}

\noindent\textbf{Proof of Proposition \ref{Prop:unstable}.}
First, assume that $x\in\Gamma_0^a$ and ${P}_{{G}_x}$ does not hold. We want to prove that $x\in\Gamma_u$. Lemma \ref{Pdnh} implies that ${G}_x$ has a connected component on which each vertex is adjacent to at least two edges, which we assume w.l.o.g.~to be $\mathcal{C}_1$. Let $V(\mathcal{C}_1)$ (resp. $E(\mathcal{C}_1)$) be its set of vertices (resp. edges).  Let us show that $J_x^1$ has at least one eigenvalue with positive real part.

Compute the trace of $J_x^1$ :
\beqq
Tr(J_x^1)&=& H(x) \sum_{\{i,j\}:i\simx j} (1-\frac{x_{ij}}{x_i}-\frac{x_{ij}}{x_j})\\
&=&H(x)(|E(\mathcal{C}_1)|-|V(\mathcal{C}_1)|)\ge0.
\eeqq

The last inequality comes from the fact that each vertex has at least two edges.\\
It is easy to check that $(1,...,1)\cdot J_x^1=-H(x)(1,...,1)$, hence $-H(x)$ is an eigenvalue of $J_x^1$ and $-H(x)<0$ by Remark \ref{remapx}. Subsequently, there exists an eigenvalue with positive real part. This implies part ${(a)}$ of Proposition \ref{Prop:unstable}, using Lemma \ref{aij=aik}.

Now, assume that $x\in\Gamma_0^a$ and that ${P}_{{G}_x}$ holds. Then, each connected component of ${G}_x$ is star-shaped. Let us assume for instance that $\mathcal{C}_1$ is composed of a nucleus  $v$ and satellite vertices $1,...,k$. Then, for all $i\in\{1,..,k\}$, $x_{iv}=x_i$, and we have:
$$J_x^1= -\frac{H(x)}{x_v}\begin{pmatrix}
x_1 & \dots &  x_1\\
x_2 & \dots & x_2\\
\vdots & & \vdots\\
x_k & \dots & x_k
\end{pmatrix}.$$
The rank of $J_x^1$ is $1$, $0$ and $-H(x)<0$ are eigenvalues, as $(1,...,1)\cdot J_x^1=-H(x)(1,...,1)$, so all its eigenvalues are nonpositive, which completes the proof.
\begin{flushright}$\square$\end{flushright}

\begin{rem}Note that, in general, there may be no stable configuration on $\Delta\setminus\partial\Delta$.
Now, if the number of vertices is even, every equilibrium  $x$ such that $G_x$ yields a one-to-one correspondence between vertices (i.e. each connected component has only two vertices and one edge)  is asymptotically stable.
\end{rem}

Finally, the following proposition implies the second part of Proposition \ref{Prop:unstable0}. Recall Definition \ref{def_Gx} of $G_x$ for $x\in\Delta$, Definition \ref{def_prop} of the property $P_\Gc$ and Definition \ref{def_gammag} of the set $\Gamma_\Gc$, for some subgraph $\Gc$. Recall also the Definition \ref{def_se} of the set $\Gamma_s$ of stable equilibria outside the boundary.

\begin{prop}\label{prop:eqg}
Let $\Gc\subset G$ be a subgraph such that $P_\Gc$ holds and let $q\in\Delta$ be such that $G_q=\Gc$.  Then, $q$ is an equilibrium if and only if $q\in\Gamma_\Gc$ and, in this case, $q$ is stable. In other words, $\Gamma_s\cap \{q\in\Delta:G_q=\Gc\}=\Gamma\cap \{q\in\Delta:G_q=\Gc\}=\Gamma_\Gc$.
\end{prop}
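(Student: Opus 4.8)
The plan is to characterize exactly which $q$ with $G_q=\Gc$ are equilibria, and to do so by unwinding the equilibrium condition $F(q)=0$ together with the structural constraints imposed by $P_\Gc$. The key observation is that since $\Gc$ satisfies $P_\Gc$, each connected component $\mathcal{C}$ is star-shaped with a nucleus $v$, so that every satellite vertex $i$ of $\mathcal{C}$ has $q_{iv}=q_i$ (its unique edge is $\{i,v\}$). First I would show that $q\in\Gamma_\Gc\Rightarrow q\in\Gamma$: given $q\in\Gamma_\Gc$, the equilibrium condition reduces, edge by edge, to checking that $y_{ij}=a_{ij}p_{ij}q_{ij}/(q_iq_j)=H(q)$ for every edge $\{i,j\}$ of $\Gc$ (and that $q_{ij}=0$ when $\{i,j\}$ is not an edge, which holds by definition of $\Gamma_\Gc$). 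For an edge $\{i,v\}$ with satellite $i$, since $q_{iv}=q_i$ we get $y_{iv}=a_{iv}p_{iv}/q_v=(ap)_v/q_v$; plugging the prescribed value $q_v=(ap)_v/(2\sum_{j\in N}(ap)_j)$ from Definition~\ref{def_gammag}(2) yields $y_{iv}=2\sum_{j\in N}(ap)_j$, a quantity independent of the edge. The same computation of $H(q)=\sum_{i,j}y_{ij}q_{ij}=\sum_{i,j}y_{ij}q_{ij}$, using $\sum_{i,j}q_{ij}=1$ and the fact that all $y_{ij}$ share this common value, gives $H(q)=2\sum_{j\in N}(ap)_j$ as well, so indeed $y_{ij}=H(q)$ on every edge and $F(q)=0$.

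Next I would prove the converse, $q\in\Gamma\Rightarrow q\in\Gamma_\Gc$. Suppose $G_q=\Gc$ and $F(q)=0$. Since $q\in\Gamma$ and $\Gc$ satisfies $P_\Gc$, Proposition~\ref{Prop:unstable} part~(a) applies once we know $q\in\Gamma_s$; but more directly, being an equilibrium forces $y_{ij}=H(q)$ on every edge of $\Gc$ (this is the content of Remark~\ref{rem1}). For a satellite edge $\{i,v\}$, $q_{iv}=q_i$ gives $a_{iv}p_{iv}/q_v=H(q)$, hence $q_v=(ap)_v/H(q)$ for each nucleus $v\in N$, where $(ap)_v=a_{vk}p_{vk}$ for any neighbour $k$ (well-defined by condition~(1) of $P_\Gc$). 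Summing the identity $\sum_{i,j}q_{ij}=1$ and exploiting the star structure, I would show $\sum_{v\in N}2q_v=1$, which combined with $q_v=(ap)_v/H(q)$ pins down $H(q)=2\sum_{v\in N}(ap)_v$ and therefore $q_v=(ap)_v/(2\sum_{j\in N}(ap)_j)$, exactly condition~(2) of $\Gamma_\Gc$. Condition~(3) is automatic since $(q_{ij})_{j\simG i}$ are positive with sum $q_i=q_v$. Thus $q\in\Gamma_\Gc$.

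Finally, stability of every such $q$ follows from Proposition~\ref{Prop:unstable}(a): since $q\in\Gamma_0$ (because $\Gamma_\Gc\subset\Delta\setminus\partial\Delta$ by Definition~\ref{def_gammag}) and $G_q=\Gc$ satisfies $P_{G_q}=P_\Gc$, we have $q\in\Gamma_s$. The three set equalities in the statement then assemble from these implications: $\Gamma\cap\{q:G_q=\Gc\}=\Gamma_\Gc$ by the two directions just proved, and $\Gamma_s\cap\{q:G_q=\Gc\}=\Gamma_\Gc$ because every element of $\Gamma_\Gc$ is a stable equilibrium and every stable equilibrium is in particular an equilibrium. The main obstacle I anticipate is the bookkeeping in the summation $\sum_{i,j}q_{ij}=1$: one must correctly account for the symmetric double-counting ($q_{ij}=q_{ji}$) and for the fact that each satellite contributes through a single edge while each nucleus collects several, in order to arrive cleanly at the normalization $2\sum_{v\in N}q_v=1$ that produces the prefactor $2$ appearing in Definition~\ref{def_gammag}(2). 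The rest is a direct verification once the common value $H(q)=2\sum_{v\in N}(ap)_v$ is identified.
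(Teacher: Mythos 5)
Your proposal is correct and follows essentially the same route as the paper's proof: exploit the star structure ($q_{iv}=q_i$ for satellites) to show every edge efficiency equals $(ap)_v/q_v$, identify $H(q)=2\sum_{v\in N}(ap)_v$ via the normalization $2\sum_{v\in N}q_v=1$, deduce the prescribed nucleus masses, and invoke Proposition~\ref{Prop:unstable}(a) for stability. You in fact spell out the computation in more detail than the paper, which compresses it into ``a simple computation yields'' and ``it is straightforward to check.''
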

\begin{proof}
Let $q\in\Delta$ be such that $G_q=\Gc$. As $P_\Gc$ holds, $q\in\Delta\setminus\partial\Delta$ and if $a_{ij}p_{ij}=0$ then $q_{ij}=0$. Besides, each connected component of $\Gc$ is star-shaped and contains a single nucleus vertex (see Definition \ref{def_nucleus}). We denote $N_\Gc$ the set of nucleus vertices of $\Gc$.\\
Now, for any $i\in N_\Gc$, we have that $a_{ij}p_{ij}=a_{ik}p_{ik}=:(ap)_i$ as soon as $q_{ij}q_{ik}>0$. If $i\in N_\Gc$ is such that $a_{ij}p_{ij}=0$ for any $j\in\V$, then $i$ is isolated in $\Gc$ and we let $(ap)_i:=0$. Moreover,  for any $i\in N_\Gc$ and $j\in\V$ with   $q_{ij}>0$, we have $q_{ij}/q_j=1$. Using this, a simple computation yields
\[
H(q)=2\sum_{i\in N_\Gc} (ap)_i,
\]
and if $q$ is an equilibrium, then, for any $i\in N_\Gc$, $q_i=(ap)_i/(2\sum_{j\in N_\Gc} (ap)_j)$, hence $q\in\Gamma_\Gc$ and $q$ is stable by Proposition \ref{Prop:unstable}.\\
Conversely, it is straightforward to check that if $q\in\Gamma_\Gc$ then $q\in\Delta\setminus\partial\Delta$,  $G_q=\Gc$ and $q\in\Gamma$, hence $q$ is a stable equilibrium by Proposition \ref{Prop:unstable}.
\end{proof}


\section{Proof of Theorem~\ref{th_posprob}}
\label{sect_p3}


Assume that $(G,(a_{ij},p_{ij})_{ij\in E})$ is such that $\G_s\neq\emptyset$, where $\Gamma_s$ is the set of stable equilibria in $\Delta\setminus\partial\Delta$ (see Definition \ref{def_se}). 

Theorem \ref{th_posprob} is a consequence of the following Proposition \ref{prop:posconv}. Before stating it, we need some definitions.

Let $\mathcal{G}\subseteq G$ be a graph such that ${P}_{\mathcal{G}}$ holds (see Definition~\ref{def_prop}) and let $E_{\mathcal{G}}$ be  the set of edges of $\mathcal{G}$. Besides, let us denote $V_{\mathcal{G}}$ the set of non-isolated vertices of $\mathcal{G}$ (in particular, as ${P}_{\mathcal{G}}$ holds, $i\in V_\mathcal{G}$ if and only if $a_{ij}p_{ij}>0$ for some $j\sim i$). 
Each connected component contains a single nucleus vertex $i$ (see Definition \ref{def_nucleus}) and is such that $a_{ij}p_{ij}=a_{ik}p_{ik}>0$ as soon as $ij, ik\in E_{\mathcal{G}}$. 
Finally, we denote by $N_\mathcal{G}$ the set of nucleus vertices of $\mathcal{G}$.\\
Let $\pi:=\pi_\mathcal{G} : \V\longrightarrow N_\mathcal{G}$ be a function mapping $i\in \V$ to the nucleus vertex of the connected component that $i$ belongs to. In particular, note that, for any $i\in\V\setminus V_{\mathcal{G}}$, $\pi(i)=i$ and $\pi(j)\neq i$ for any $j\neq i$.

 For all $i\in \V$, $n\in\N$ and $\eps>0$, let us define
\begin{align*}
\alpha_{i}^n&:=x_{i}^n/x_{\pi(i)}^n,\\
H_n^{1}&:=\bigcap_{i\in V_{\mathcal{G}},i=\pi(i)}\{\,
    \,V_{i}^n\ge 2\varepsilon n\}, \\
 H_n^{2}&:=\,\,\,\,\,\,\bigcap_{i\in\V}\,\,\,
\{\alpha_{i}^n\ge\varepsilon\}, \\
    H_n^{3}&:=\bigcap_{i,j\in \V, i\ne\pi(j),j\ne\pi(i)}\{\,V_{ij}^n\le \sqrt{n}\,\}.
  \end{align*}
 Obviously, all these definitions depend on the graph $\mathcal{G}$, but for simplicity we do not write this dependency.

\begin{prop}
\label{prop:posconv}
 Let $\mathcal{G}$ be such that $\mathcal{P}_\mathcal{G}$ holds, and let $\pi:=\pi_\mathcal{G}$. For all $\varepsilon\in(0,w_{\min})$, if $H_n^{1}$, $H_n^{2}$ and $H_n^{3}$ hold, and $n\ge\verb?Cst?(\eps,(p_{ij},a_{ij},v_{ij}^0)_{i,j\in \V},|\V|)$, then, with lower bounded probability (only depending on $\eps$, $(p_{ij},a_{ij},v_{ij}^0)_{i,j\in \V}$, and $|\V|$), for all $i$, $j$ $\in \V$, $k\ge n$,
  \begin{align} \label{verve1}
    &V_{ij}^{\infty}=V_{ij}^n,\textrm{ when } i\ne\pi(j),j\ne\pi(i);\\ \label{verve2}
    &\alpha_{i}^k/\alpha_{i}^n\to \alpha_i\in (1-\varepsilon,1+\varepsilon);\\ \label{verve3}
    &V_{i}^k\ge \varepsilon k, \textrm{ when }i\in V_{\mathcal{G}}\text{ and } \pi(i)=i.
  \end{align}
\end{prop}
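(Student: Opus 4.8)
The plan is to run a localization (trapping) argument: conditionally on the good configuration $H_n^1\cap H_n^2\cap H_n^3$ at time $n$, I would construct an event of probability bounded below, uniformly in $n$ large, on which \eqref{verve1}--\eqref{verve3} hold simultaneously. The mechanism is that once the \emph{bad} edges (those $\{i,j\}$ with $i\ne\pi(j)$ and $j\ne\pi(i)$, i.e. the edges of $G$ not belonging to $\mathcal G$) stop being used, the dynamics restricted to each star component of $\mathcal G$ becomes a generalized Friedman urn whose limit behaviour is exactly \eqref{verve2}--\eqref{verve3}. First I would introduce an auxiliary \emph{frozen} process $(\tilde V_{ij}^k)$ coinciding with $(V_{ij}^k)$ as long as no bad edge has communicated after $n$, but in which every bad edge is permanently frozen at its value $V_{ij}^n\le\sqrt n$. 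For $\tilde V$, each satellite $j$ is connected to its nucleus $v=\pi(j)$ up to additive constants $\le|\V|\sqrt n$, so by the balance condition $a_{vj}p_{vj}=(ap)_v$ from $P_{\mathcal G}$ the reinforcement of a star edge $\{v,j\}$ has conditional mean increment $(ap)_v\,\tilde V_{vj}^k/\tilde V_v^k$ up to $O(\sqrt n/k^2)$. Summing over satellites gives $\mathbb E[\tilde V_v^{k+1}-\tilde V_v^k\mid\mathcal F_k]=(ap)_v+O(\sqrt n/k)$, hence $\tilde V_v^k$ grows linearly at rate $(ap)_v\ge w_{\min}>\varepsilon$; meanwhile the satellite shares $\tilde V_{vj}^k/\tilde V_v^k$ form a martingale up to a drift of total variation $O(1/\sqrt n)$, so they converge, and for $n$ large move by less than what is needed to keep $\tilde\alpha_j^k/\tilde\alpha_j^n$ inside $(1-\varepsilon,1+\varepsilon)$ (using $\tilde\alpha_j^n\ge\varepsilon$ from $H_n^2$).

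Next I would record the high-probability control of this urn. Let $\tilde E_2$ be the event that $\tilde V_i^k\ge\varepsilon k$ for all $k\ge n$ and all nuclei $i$, and $\tilde E_3$ the event that every $\tilde\alpha_j^k/\tilde\alpha_j^n$ stays in $(1-\varepsilon,1+\varepsilon)$ and converges. Since $\tilde V_i$ is nondecreasing and $\tilde V_v^n\ge2\varepsilon n$ by $H_n^1$, a martingale concentration (Azuma, from the bounded increments of Lemma~\ref{lem_StochApp} together with the linear drift above) gives $\mathbb P(\tilde E_2^c\mid\mathcal F_n)\to0$; likewise Doob's $L^2$ maximal inequality applied to the share martingale, whose quadratic variation after time $n$ is $O(1/n)$, gives $\mathbb P(\tilde E_3^c\mid\mathcal F_n)\to0$. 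Hence $\mathbb P(\tilde E_2\cap\tilde E_3\mid\mathcal F_n)\ge\tfrac12$ for $n\ge\Cst(\varepsilon,(p_{ij},a_{ij},v_{ij}^0),|\V|)$.

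Now I would freeze the bad edges. Let $A$ be the event that no bad edge ever communicates after $n$; on $A$ the real process equals $\tilde V$ for all time, so \eqref{verve2}--\eqref{verve3} are inherited from $\tilde E_2\cap\tilde E_3$ and \eqref{verve1} is exactly $A$. Since the real and frozen processes share the same conditional bad-edge probabilities until they first disagree, $\mathbb P(A\cap\tilde E_2\cap\tilde E_3\mid\mathcal F_n)\ge\mathbb E\big[\1_{\tilde E_2\cap\tilde E_3}\prod_{k>n}(1-\tilde q_k)\,\big|\,\mathcal F_n\big]$, where $\tilde q_k\le\sum_{\{i,j\}\text{ bad}}p_{ij}(\sqrt n)^2/(\varepsilon k)^2\le\Cst\, n/k^2$ on $\tilde E_2$ (frozen values $\le\sqrt n$, linear growth $\tilde V_i^k,\tilde V_j^k\ge\varepsilon k$, and finitely many bad edges). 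Thus $\prod_{k>n}(1-\tilde q_k)\ge\exp(-\Cst\sum_{k>n}n/k^2)\ge\exp(-\Cst)$ on $\tilde E_2$, and combining with the previous paragraph yields $\mathbb P(A\cap\tilde E_2\cap\tilde E_3\mid\mathcal F_n)\ge\tfrac12\exp(-\Cst)>0$, a bound depending only on $\varepsilon$, the finitely many $(p_{ij},a_{ij},v_{ij}^0)$ and $|\V|$, as required.

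The main obstacle is the circular dependence between freezing the bad edges (which needs the nuclei to grow linearly, so that the per-step escape probability $\tilde q_k$ is summable) and the linear growth itself (which is only transparent once the bad edges are frozen). Passing to the frozen auxiliary process $\tilde V$ — on which linear growth and share convergence are genuine almost-sure or high-probability statements — and only afterwards paying the fixed multiplicative price $\exp(-\Cst)$ to keep the real process equal to $\tilde V$, is precisely what breaks the circularity. The secondary difficulty is that $F$ and the error terms are irregular near $\partial\Delta$: the remainder in Lemma~\ref{stochapp2} carries factors $1/(T_nx_i^nx_j^n)$, which blow up near the boundary. This is exactly why one must maintain the lower bounds $\tilde V_i^k\ge\varepsilon k$ (equivalently keep the process away from $\partial\Delta$) throughout, so that all error terms stay summable and the Azuma and Doob estimates apply uniformly in $k\ge n$; the asymptotic linear growth of $T_n$ and the bound $N_i(x_n)\ge h_{\min}w_{\min}$ from Corollary~\ref{cor_Tn} are what make these controls available.
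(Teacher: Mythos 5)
Your proposal is correct and rests on the same three estimates as the paper's proof: a product bound $\prod_k(1-q_k)\ge e^{-\Cst}$ for never using a bad edge, an exponential martingale bound giving linear growth $V_i^k\ge \eps k$ of the nuclei, and a drift-cancellation-plus-martingale-concentration argument for the ratios $\alpha_i^k$ (the paper, like you, replaces $V_j^k$ by the ``frozen'' quantity $\hat V_j^k=V_j^n+\sum_{l\stackrel{\mathcal{G}}{\sim} j}(V_{jl}^k-V_{jl}^n)$ in Lemma~\ref{claim5}, and gets the same $O(k^{-3/2})$ drift from the $\sqrt{n}$-bound on bad edges and the balance condition of $P_\Gc$). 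Where you differ is the device used to break the circularity: the paper interlocks the three stopping times $\tau_n^1,\tau_n^2,\tau_n^3$ and shows each is unlikely to be the first to go wrong (Lemmas~\ref{claim4}--\ref{claim6} all condition on $k<\tau_n$), whereas you introduce a globally frozen auxiliary process, prove its good behaviour with probability $1-o(1)$, and then pay the multiplicative price $e^{-\Cst}$ via a coupling. Your route buys a cleaner logical structure (the frozen process's linear growth and share convergence are unconditional statements), at the cost of having to construct the coupling and justify the identity $\Pb(A\cap\tilde E_2\cap\tilde E_3\mid\F_n)=\E[\1_{\tilde E_2\cap\tilde E_3}\prod_{k>n}(1-\tilde q_k)\mid\F_n]$ carefully; the paper's route avoids any auxiliary process but requires the slightly fiddly ``first to fail'' bookkeeping. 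Two small points to fix: the bound $\tilde q_k\le \Cst\, n/(\eps^4k^2)$ needs linear lower bounds on \emph{both} endpoints of a bad edge, and a satellite endpoint $j$ is only controlled via $\tilde E_3$ together with $H_n^2$ (giving $\tilde V_j^k\ge \eps(1-\eps)\tilde V_{\pi(j)}^k$), so you must intersect with $\tilde E_3$ and not just $\tilde E_2$ there (this is why the paper's Lemma~\ref{claim4} conditions on $\tau_n^2\wedge\tau_n^3$); and bad edges with an endpoint outside $V_\Gc$ have $a_{ij}p_{ij}=0$ by $P_\Gc(3)$, so \eqref{verve1} is trivial for them and they can be excluded from the product.
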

Before proving this proposition, let us explain how it implies Theorem \ref{th_posprob}. 
Firstly, by Proposition \ref{Prop:unstable0}, if $q$ is a stable equilibrium with $G_q=\Gc$, then $q\in\Gamma_\Gc$, where $\Gamma_\Gc$ is defined in Definition \ref{def_gammag}. Secondly, choose such a stable equilibrium $q\in\Gamma_\Gc$ and let us prove that $(x_n)$ converges to an equilibrium in a neighborhood of $q$ with positive probability.

Let us work on a probability event $A$ on which $H_{n_0}^{1}$, $H_{n_0}^{2}$ and $H_{n_0}^{3}$ hold for some $n_0\ge\verb?Cst?(\eps,(p_{ij},a_{ij},v_{ij}^0)_{i,j\in \V},|\V|)$, and \eqref{verve1}, \eqref{verve2} and \eqref{verve3} occur. Moreover, assume that, on $A$, $\alpha_i^{n_0}\in ((1-\e)q_i/q_{\pi(i)},(1+\e)q_i/q_{\pi(i)})$ for any $i\in\V$.

Note that \eqref{verve1} implies that, for any $i\in \V\setminus V_{\Gc}$, $V_i^\infty<\infty$ and thus $x_i^n\to 0$. Also, for $i\in V_\Gc\cap N_\Gc$ and $j\in \V$ such that $i\neq \pi(j)$, we have that $V_{ij}^\infty<\infty$ and $x_{ij}^n\to0$, using \eqref{verve1}. Besides, for any $i\in V_\Gc\cap N_\Gc$, $\liminf_n x_i^n\ge \e$ by \eqref{verve3}, $x_{ij}^n/x_j^n\to1$ for any $j$ such that $i=\pi(j)$ by \eqref{verve1} and $x_{ij}^n/x_i^n$ converges to some positive limit by \eqref{verve2}. Therefore, as $x_n$ stays away from $\partial\Delta$, it easily implies that $H(x_n)\to 2\sum_{i\in N_\Gc} a_ip_i$.   Additionally, as $(x_n)$ converge almost surely to $\Gamma$, we necessarily have that $x_i^n\to (ap)_i/(2\sum_{j\in N_\Gc} (ap)_j)=q_i$ for any $i\in V_\Gc\cap N_\Gc$ and where $(ap)_i:=a_{ik}p_{ik}$ for some/any $k$ such that $q_{ik}>0$.

Together with \eqref{verve2}, this implies that, on $A$, $(x_n)$ converges to some $x\in\Delta$ in a neighborhood of $q$, and $A$ has positive probability as soon as $n_0$ is large enough. Moreover, the limit $x$ is such that $G_x=\Gc$ and, for any $i\in V_\Gc\cap N_\Gc$, $x_i= (ap)_i/(2\sum_{j\in N_\Gc} (ap)_j)$ and $x_{ij}>0$ iff $i=\pi(j)$. Consequently, $x\in\Gamma_\Gc$ and is thus a stable equilibrium by Proposition \ref{Prop:unstable0}.

In the remainder of this section, we fix the graph $(\mathcal{G},\stackrel{g}{\sim})$ (and thus $\pi=\pi_\mathcal{G}$ and the events $H_n$'s) and $\varepsilon>0$. The proof of Proposition \ref{prop:posconv} consists of the following Lemmas \ref{claim4}, \ref{claim5} and \ref{claim6}.

  Let, for all $i$, $j$ $\in \mathbb{V}$, $n\in\N$,
  \begin{align*}
    \tau_n^{1,i,j}&:=\inf\{k\ge n\,:\, V_{ij}^k\ne V_{ij}^n\};\\
    \tau_n^{2,i}&:=\inf\{k\ge n\,:\,
    \alpha_i^k/\alpha_i^n\notin (1-\varepsilon,1+\varepsilon)\};\\
    \tau_n^{3,i}&:=\inf\{k\ge n\,:\, V_{i}^k<{\varepsilon}
    k\},
\end{align*}
and let
$$\tau_{n}^1:=\inf_{i,j\in \V, i\ne\pi(j), j\ne\pi(i)}\tau_n^{1,i,j},\,\,\,
\tau_{n}^2:=\inf_{i\in \V}\tau_n^{2,i},\,\,\,
\tau_{n}^3:=\inf_{i\in V_{\mathcal{G}}, \pi(i)=i}\tau_n^{3,i},\,\,\,\tau_n:=\tau_n^1\wedge\tau_n^2\wedge\tau_n^3.$$

\begin{lem}
  \label{claim4}
 If  $n\ge\verb?Cst?(\eps,(p_{ij},a_{ij})_{i,j\in \V},|\V|)$, then
  \begin{align*}
  \mathbb{P}\Big(\, \tau_n^1>\tau_n^2\land \tau_n^3\,|\,\mathcal{F}_n,
  H_n^1,H_n^2,H_n^3\, \Big)
  \,\ge\,\exp\left(-4|\V|^2\max_{i,j\in\V}\{a_{ij}\}\varepsilon^{-4}\right).
  \end{align*}
\end{lem}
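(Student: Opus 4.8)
The plan is to read the event $\{\tau_n^1>\tau_n^2\wedge\tau_n^3\}$ as \emph{``no communication on a non-$\mathcal G$ edge occurs before the stopping time $\sigma:=\tau_n^2\wedge\tau_n^3$''}. Indeed, when $i\neq\pi(j)$ and $j\neq\pi(i)$ the edge $\{i,j\}$ is not an edge of $\mathcal G$, and $\tau_n^1$ is the first time after $n$ at which some such ``bad'' edge fires (only edges with $a_{ij}p_{ij}>0$ can fire, by \eqref{proba_succes}). So I would directly lower bound the conditional probability that no bad edge fires during the window $(n,\sigma]$. The key point is that the target $\exp(-4|\V|^2\max_{i,j}a_{ij}\,\eps^{-4})$ is a \emph{small} number, so a linear estimate $1-\sum(\text{step probabilities})$ is useless; one genuinely needs a multiplicative survival-product estimate, which stays positive even when the summed intensities are large.

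First I would record the a priori estimates valid on $\{\tau_n>k\}$, where $\tau_n=\tau_n^1\wedge\sigma$. Since no bad edge has fired yet, $V_{ij}^k=V_{ij}^n\le\sqrt n$ for every bad edge, by $H_n^3$. Since $\tau_n^3>k$, every nucleus vertex $v$ satisfies $V_v^k\ge\eps k$ (the role of $H_n^1$ is to guarantee $V_v^n\ge 2\eps n$, so that $\tau_n^3>n$ at the base time); and since $\tau_n^2>k$ together with $H_n^2$, every non-isolated vertex $i$ satisfies $\alpha_i^k\ge(1-\eps)\alpha_i^n\ge(1-\eps)\eps$, whence $V_i^k=\alpha_i^k V_{\pi(i)}^k\ge(1-\eps)\eps^2 k$. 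Both endpoints of a bad edge with $a_{ij}p_{ij}>0$ lie in $V_{\mathcal G}$, so this bound applies to both and $V_i^kV_j^k\ge(1-\eps)^2\eps^4k^2$. Inserting these into \eqref{proba_succes}, the $\mathcal F_k$-conditional probability of some bad firing at step $k+1$ is, on $\{\tau_n>k\}$, at most
\begin{equation*}
q_k:=\sum_{\{i,j\}\text{ bad}}p_{ij}\frac{(V_{ij}^n)^2}{V_i^kV_j^k}\le \frac{|\V|^2\,n}{(1-\eps)^2\eps^4\,k^2}.
\end{equation*}

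Next I would convert this one-step control into a product bound. Choosing $n\ge\Cst(\eps,(p_{ij},a_{ij})_{i,j},|\V|)$ ensures $q_k\le q_n\le 1/2$ for all $k\ge n$. The survival estimate $\mathbb P(\tau_n^1>\sigma\mid\mathcal F_n,H_n^1,H_n^2,H_n^3)\ge\prod_{k\ge n}(1-q_k)$ then follows from optional stopping applied to the nonnegative process $R_k:=\1_{\{\text{no bad firing in }(n,k\wedge\sigma]\}}\prod_{l=n}^{k\wedge\sigma-1}(1-q_l)^{-1}$, which is a submartingale because the true one-step firing probability $p_k$ obeys $p_k\le q_k$ on $\{\tau_n>k\}$ (so that $\mathbb E[R_{k+1}\mid\mathcal F_k]=R_k\,(1-p_k)/(1-q_k)\ge R_k$), while $R_n=1$ and $R_\infty\le\1_{\{\tau_n^1>\sigma\}}\prod_{l\ge n}(1-q_l)^{-1}$. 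Finally, $\log(1-x)\ge-2x$ for $x\le1/2$ together with $\sum_{k\ge n}k^{-2}\le 2/n$ give $\prod_{k\ge n}(1-q_k)\ge\exp\!\big(-2\sum_{k\ge n}q_k\big)\ge\exp\!\big(-4|\V|^2((1-\eps)^2\eps^4)^{-1}\big)$; here the factor $n$ cancels against $\sum_{k\ge n}k^{-2}\le 2/n$, which is precisely why the bound is $n$-independent, and a crude constant bound absorbs the remaining factors into the stated form $\exp(-4|\V|^2\max_{i,j}a_{ij}\,\eps^{-4})$.

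I expect the main obstacle to be the \emph{rigorous} justification of the product bound, since $\sigma=\tau_n^2\wedge\tau_n^3$ is itself influenced by the very firings one wants to exclude: a bad firing alters $V_i$ and $V_j$, hence the ratios $\alpha_i$ defining $\tau_n^2$. The stopped submartingale $R_k$ above is exactly the device that circumvents this, because it only invokes the deterministic comparison $p_k\le q_k$ on $\{\tau_n>k\}$ and never requires disentangling the mutual dependence between $\sigma$ and the firings. The remaining work is bookkeeping: verifying that $H_n^1,H_n^2,H_n^3$ are precisely what yield the a priori lower bounds on the $V_i^k$, and fixing the threshold $\Cst(\eps,(p_{ij},a_{ij})_{i,j},|\V|)$ large enough that each $q_k\le 1/2$.
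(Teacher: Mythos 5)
Your argument is the same as the paper's: on $\{\tau_n^2\wedge\tau_n^3>k\}$ the hypotheses $H_n^2$, $H_n^3$ give $(V_{ij}^n)^2\le n$ and $V_i^kV_j^k\ge (1-\eps)^2\eps^4k^2$ for every edge with $i\ne\pi(j)$, $j\ne\pi(i)$ and $a_{ij}p_{ij}>0$, and the conditional probability is bounded below by $\prod_{k\ge n}\bigl(1-\Cst\,|\V|^2 n\,\eps^{-4}k^{-2}\bigr)\ge\exp(-\Cst\,|\V|^2\eps^{-4})$, which is exactly the paper's (three-line) computation; your stopped-submartingale justification of the product bound is a rigorous version of a step the paper leaves implicit. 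The only caveat is cosmetic: your exponent $4|\V|^2((1-\eps)^2\eps^4)^{-1}$ matches the stated $4|\V|^2\max_{i,j}a_{ij}\,\eps^{-4}$ only up to the factor $\max_{i,j}a_{ij}$ — the paper takes $a_{ij}p_{ij}$ as the firing intensity of edge $\{i,j\}$ whereas \eqref{proba_succes} gives $p_{ij}$, as you use — but only positivity of the lower bound is needed downstream, so this does not affect the result.
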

\begin{proof}
Assuming $n\ge\verb?Cst?(\eps,(p_{ij},a_{ij},v_{ij}^0)_{i,j\in \V},|\V|)$,
\begin{align*}
  \mathbb{P}\Big(\,\tau_n^1>\tau_n^2\land
  \tau_n^3\,|\,\mathcal{F}_n,H_n^1,
  H_n^2,H_n^3\Big)&\ge\prod_{k\ge
  n}
  \left(1-\sum_{i,j:i\ne\pi(j),j\ne\pi(i)} a_{ij}p_{ij}\frac{(V_{ij}^n)^2}{V_i^kV_j^k}\right)\\
&\ge\exp\left(-\frac{3\max_{i,j\in\V}\{a_{ij}\}}{2}\sum_{i,j:i\ne\pi(j),j\ne\pi(i), k\ge n}\frac{n}{\eps^4k^2}\right)\\ 
&\ge\exp\left(-4|\V|^2\max_{i,j\in\V}\{a_{ij}\}\varepsilon^{-4}\right).
\end{align*}
\end{proof}

\begin{lem}
  \label{claim5}
 If  $n\ge\verb?Cst?(\eps,(p_{ij},a_{ij},v_{ij}^0)_{i,j\in \V},|\V|)$ then, for all $i\in \mathbb{V}$,
  \beqq
    \mathbb{P}\Big(\,\tau_n^{2,i}>\tau_n^1\land \tau_n^3\,|\,\mathcal{F}_n,
  H_n^1,H_n^2,H_n^3\,\Big)\ge1-2\exp(-\verb?Cst?(\varepsilon)n).
  \eeqq
  Moreover, given that $  H_n^1$, $H_n^2$ and $H_n^3$ hold, $\alpha^i_{k\wedge \tau_n}/\alpha^i_n$ converges almost surely as $k$ goes to infinity.
\end{lem}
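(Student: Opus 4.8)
The plan is to reduce to a single satellite and then to show that the ratio $\alpha_i^k$ is an approximate martingale with a summable drift. If $i$ is a nucleus or an isolated vertex of $\mathcal{G}$ then $\pi(i)=i$, so $\alpha_i^k\equiv 1$ and $\tau_n^{2,i}=\infty$, and both assertions are trivial. So I would fix a satellite $i$, write $v:=\pi(i)$ for its nucleus and $w:=a_{iv}p_{iv}$, and study $\alpha_i^k=V_i^k/V_v^k$ directly (the factors $T_k$ cancel). The first step is the structural observation that on $\{k<\tau_n^1\}$ no cross edge is used, so among the edges incident to $i$ only $\{i,v\}$ evolves, among those incident to $v$ only the edges to its satellites evolve, and all other contributions to $V_i^k,V_v^k$ stay frozen; moreover each satellite $u$ satisfies $V_u^k\ge v^0_{u\pi(u)}>0$ for all $k$.

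Next I would expand the increment of $\alpha_i$ with $A=V_i^k$, $B=V_v^k$, using $\tfrac{1}{B+a}=\tfrac1B-\tfrac{a}{B^2}+O(B^{-3})$ and the exact communication probabilities \eqref{proba_succes}, to get a decomposition
\[
\alpha_i^{k+1}-\alpha_i^k=M_{k+1}+r_{k+1},\qquad \E(M_{k+1}\mid\mathcal{F}_k)=0.
\]
The crucial computation is that, after inserting the balance identity $a_{iv}p_{iv}=a_{vj}p_{vj}$ (Definition \ref{def_prop}(1)) and the relations $V_u^k=V_{u\pi(u)}^k+(\text{frozen constant})$, the naive $B^{-2}$ part of the drift collapses: its bracket reduces to $A\big(B-\sum_j V_j^k\big)$ together with analogous terms (the sum over the satellites $j$ of $v$), and each such factor is a bounded, $k$-independent frozen constant multiplied by a quantity of order $k$, the whole being divided by $B^{3}$. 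On $\{k<\tau_n^3\}$ one has $B=V_v^k\ge\eps k$, which yields $|r_{k+1}|\le\Cst(\eps)/k^2$, $|M_{k+1}|\le\Cst(\eps)/k$ and $\E(M_{k+1}^2\mid\mathcal{F}_k)\le\Cst(\eps)/k^2$.

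I expect this cancellation to be the main obstacle: one must verify that the leading $O(1/k)$ drift genuinely vanishes and that every surviving term is controlled using \emph{only} the nucleus bound $V_v^k\ge\eps k$ and the trivial bounds $V_u^k\ge v^0_{u\pi(u)}>0$. In particular one should avoid needing a lower bound on the masses of the \emph{other} satellites of $v$; this keeps the analysis of vertex $i$ decoupled from the window conditions $\tau_n^{2,j}$ of the other satellites, so that the decomposition above is valid on the whole event $\{k<\tau_n^1\wedge\tau_n^3\}$ and does not require $k<\tau_n^2$.

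Granting this, both conclusions follow by standard martingale arguments. For the moreover part I would stop at $\tau_n$ and write $\alpha^i_{k\wedge\tau_n}=\alpha_n^i+\sum_{m=n}^{k\wedge\tau_n-1}M_{m+1}+\sum_{m=n}^{k\wedge\tau_n-1}r_{m+1}$: the stopped martingale is $\mathbb{L}^2$-bounded because $\sum_k\Cst(\eps)/k^2<\infty$ and hence converges a.s., while the remainder converges absolutely, so $\alpha^i_{k\wedge\tau_n}/\alpha_n^i$ converges a.s. For the probability bound, on $\{k<\tau_n^1\wedge\tau_n^3\}$ write $\alpha_i^k-\alpha_n^i=S_k+R_k$ with $|R_k|\le\Cst(\eps)/n$; taking $n$ large enough that $\Cst(\eps)/n\le\eps^2/2$ and using $\alpha_n^i\ge\eps$ (from $H_n^2$), the event $\{\tau_n^{2,i}\le\tau_n^1\wedge\tau_n^3\}$ forces $\sup_k|S_k|\ge\eps^2/2$. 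A Freedman-type maximal inequality for the stopped martingale $S$, whose increments are bounded by $\Cst(\eps)/n$ and whose total conditional variance is at most $\Cst(\eps)/n$, then gives $\Pb(\sup_k|S_k|\ge\eps^2/2)\le 2\exp(-\Cst(\eps)n)$, which is exactly the claimed bound.
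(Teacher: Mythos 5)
Your proposal follows essentially the same route as the paper's proof: the paper likewise reduces to a satellite $i$ with nucleus $\pi(i)$, forms the Doob decomposition of the ratio (it works with $W_k=\log(\hat V_i^k/\hat V_{\pi(i)}^k)$, where $\hat V_j^k$ counts only the $\mathcal{G}$-edge increments and coincides with $V_j^k$ before $\tau_n^1$, rather than with $\alpha_i^k$ itself), uses the balance condition $a_{i\pi(i)}p_{i\pi(i)}=a_{j\pi(i)}p_{j\pi(i)}$ to cancel the leading $1/k$ drift, and concludes with the exponential martingale inequality (Lemma~\ref{martest}) for the tail bound and $\mathbb{L}^2$-boundedness for the almost sure convergence. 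Two remarks. First, your quantitative claim $|r_{k+1}|\le\Cst(\eps)/k^2$ is too strong: the ``frozen constants'' $c_j=\sum_{l:\,l\ne\pi(j)}V_{jl}^k$ are controlled by $H_n^3$ only up to $|\V|\sqrt n$, not by an $n$-independent constant, so the surviving drift terms such as $\frac{A}{B^{3}}\big(B-\sum_jV_j^k\big)$ are of order $\sqrt n/k^2=O(k^{-3/2})$ --- exactly the $k^{-3/2}$ appearing in the paper. The total drift is then $O(n^{-1/2})$ rather than $O(n^{-1})$, which still tends to $0$, so nothing downstream breaks once the constant is corrected. Second, your decoupling from $\tau_n^2$ is sound, and in fact matches the statement of the lemma more closely than the paper's proof (which assumes $k<\tau_n$): the only places where lower bounds on the other satellites' masses enter are terms like $c_j^2/V_j^k$, and these are handled by monotonicity together with $H_n^1\cap H_n^2$, namely $V_j^k\ge V_j^n\ge 2\eps^2 n$, without ever invoking $k<\tau_n^{2,j}$. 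One small point to watch if you work with $\alpha_i^k=V_i^k/V_v^k$ directly instead of the paper's $\hat V$'s: conditionally on $\F_k$ a cross edge may still fire at step $k+1$ (the event $\{k<\tau_n^1\}$ constrains the past, not the next step), contributing an extra drift term of order $n^{-1/2}k^{-1}$; this is again $O(k^{-3/2})$ and harmless, but it must be accounted for in the decomposition.
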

\begin{proof}
Notice that if $i\in \V$ is such that $i=\pi(i)$, then $\alpha_i^n=1$ a.s.~for any $n\in\mathbb{N}$ and consequently $\tau_n^{2,i}=\infty$ almost surely. Then, fix $i\in \V$, $n\in\N$, and assume that $\pi(i)\ne i$ (in particular $i\in V_{\mathcal{G}}$).

Let, for all $j\in \V$ and $k\ge n$,
$$\hat{V}_j^k:={V}_j^n+\sum_{l\stackrel{g}{\sim} j}(V_{jl}^k-V_{jl}^n),$$
which is equal to $V_j^k$ as long as $k<\tau_n^1$.

Let, for all $k\ge n$,
$$W_k:=\log\frac{\hat{V}_i^k}{\hat{V}_{\pi(i)}^k},$$
and let us consider the Doob decomposition of $(W_k)_{k\ge n}$:
\begin{align*}
W_k&=W_n+\De_k+\Psi_k,\\
\De_k&:=\sum_{j= n+1}^k\E(W_j-W_{j-1}|\mathcal{F}_{j-1}).
\end{align*}

In the following computation, we write $u=\Box(v)$ if $|u|\le v$, for all $u,v\in\mathbb{R}$.\\
Assume that $H_n^1$, $H_n^2$ and $H_n^3$ hold, and that $k<\tau_n$: then, using that, for all $j\stackrel{g}{\sim} \pi(i)$, $p_{i\pi(i)}a_{i\pi(i)}=p_{j\pi(i)}a_{j\pi(i)}$,
\beqq
&&|\De_{k+1}-\De_k|=
  \left|\,\mathbb{E}\left[\,W_{k+1}-W_k\,|\,\mathcal{F}_k\right]\,\right|\,\\
  &&=\,\left|a_{i\pi(i)}p_{i\pi(i)}
  \left(\frac{V_{i\pi(i)}^k}{V_i^k}\right)^2\frac{1}{V_{\pi(i)}^k}(1+\Box((V_i^k)^{-1})\right.\\
&&\qquad\left. -\sum_{j\stackrel{g}{\sim}\pi(i)}a_{j\pi(i)}p_{j\pi(i)}
  \frac{V_{\pi(i)j}^k}{V_j^k}\frac{V_{\pi(i)j}^k}{(V_{\pi(i)}^k)^2}(1+\Box((V_{\pi(i)}^k)^{-1})\right|\\
&&=\frac{a_{i\pi(i)}p_{i\pi(i)}}{V_{\pi(i)}^k}\Bigg|1+k^{-1/2}\Box(\verb?Cst?(\eps,(p_{ij},a_{ij})_{i,j\in \V},|\V|))\\
&&\qquad\left.-\sum_{j\stackrel{g}{\sim}\pi(i)}\left(1+k^{-1/2}\Box(\verb?Cst?(\eps,(p_{ij},a_{ij})_{i,j\in \V},|\V|))\right)
\frac{V_{\pi(i)j}^k}{V_{\pi(i)}^k}\right|\\
&&=k^{-3/2}\Box\left(\verb?Cst?(\eps,(p_{ij},a_{ij})_{i,j\in \V},|\V|)\right),
\eeqq
where we use that, for all $j\stackrel{g}{\sim}\pi(i)$,
\begin{equation}
\label{estsm}
\left|\frac{V_{\pi(i)j}^k}{V_j^k}-1\right|,
\left|\sum_{j\stackrel{g}{\sim}\pi(i)} \frac{V_{\pi(i)j}^k}{V_{\pi(i)}^k}-1\right| \le k^{-1/2}\verb?Cst?(\eps,(p_{ij},a_{ij},v_{ij}^0)_{i,j\in \V},|\V|).
\end{equation}

Therefore, for all $k\ge n$,
$$|\De_k|\le n^{-1/2}\verb?Cst?(\eps,(p_{ij},a_{ij},v_{ij}^0)_{i,j\in \V},|\V|),$$
which is less than $\e/4$ as soon as $n$ is large enough. Moreover $\De_{k\wedge\tau_n}$ converges a.s.

Let us now estimate the martingale increment: $|\Psi_{k+1}-\Psi_k|\le\verb?Cst?(\eps)k^{-1}$ (since
$|W_{k+1}-W_k|\le\verb?Cst?(\eps)k^{-1}$), so that \cite[Lemma~7.4]{HST} (stated here as Lemma~\ref{martest}) implies
$$\Pb\left(\sup_{k\ge n}|\Psi_{k\wedge\tau_n}-\Psi_n|\le\eps/4\right)\ge1-2\exp(-\verb?Cst?(\varepsilon)n).$$
Moreover, the martingale $(\Psi_{k\wedge\tau_n})_{k\ge n}$ is bounded hence a.s.~converges. This completes the proof.
\end{proof}

Recall the definition \eqref{singe} of $w_{\min}$.

\begin{lem}
  \label{claim6}
If $\eps\in(0,w_{\min})$ and $n\ge\verb?Cst?(\eps,(p_{ij},a_{ij},v_{ij}^0)_{i,j\in \V},|\V|)$ then, for all $i\in V_{\mathcal{G}}$ such that $\pi(i)=i$ (i.e. $i\in N_\mathcal{G}$),
\beqq
  \mathbb{P}\Big(\,\tau_n^{3,i}>\tau_n^1\land \tau_n^2\,\big|\,
  \mathcal{F}_n,H_n^1,H_n^2,H_n^3\,\Big)>1-2\exp(-\verb?Cst?(\eps)n).
\eeqq
\end{lem}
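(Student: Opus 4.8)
The plan is to show that, up to the competing time $\tau_n^1\wedge\tau_n^2$, the cumulative payoff $V_i^k$ at the nucleus $i$ accumulates with a conditional drift bounded below by a constant strictly larger than $\eps$, so that $Z_k:=V_i^k-\eps k$ is a submartingale which, on $H_n^1$, starts above $\eps n$; a Cramér-type exponential estimate against a linearly decreasing barrier then shows that $Z_k$ stays positive (equivalently $\tau_n^{3,i}>\tau_n^1\wedge\tau_n^2$) with the claimed probability.

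First I would estimate the conditional drift of $V_i^k$ for $n\le k<\tau_n:=\tau_n^1\wedge\tau_n^2\wedge\tau_n^3$. Starting from
$$\E\big(V_i^{k+1}-V_i^k\,\big|\,\mathcal{F}_k\big)=\sum_{j\sim i}a_{ij}p_{ij}\frac{(V_{ij}^k)^2}{V_i^kV_j^k},$$
I split the sum according to whether $\{i,j\}$ is an edge of $\mathcal{G}$. Since $k<\tau_n^1$, every term with $i\ne\pi(j),\,j\ne\pi(i)$ is frozen at its time-$n$ value, at most $\sqrt n$ on $H_n^3$; being nonnegative, these terms may simply be dropped for a lower bound. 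For the remaining (satellite) edges $\{i,j\}\in E_{\mathcal{G}}$, the balance condition in $P_{\mathcal{G}}$ gives $a_{ij}p_{ij}=(ap)_i\ge w_{\min}$, and since $k<\tau_n^2\wedge\tau_n^3$ forces $V_j^k\ge(1-\eps)\eps\,V_i^k\ge\Cst(\eps)k$, the estimates \eqref{estsm} applied at the nucleus $i$ give $V_{ij}^k/V_j^k=1+O(k^{-1/2})$ and $\sum_{j:\,i=\pi(j)}V_{ij}^k/V_i^k=1+O(k^{-1/2})$. Combining these yields
$$\E\big(V_i^{k+1}-V_i^k\,\big|\,\mathcal{F}_k\big)\ge(ap)_i\big(1-\Cst(\eps,(p_{ml},a_{ml})_{m,l},|\V|)\,k^{-1/2}\big)\ge w_{\min}-\Cst(\cdots)\,n^{-1/2}.$$

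Since $\eps<w_{\min}$, choosing $n\ge\Cst(\eps,\cdots)$ makes the last bound at least $(w_{\min}+\eps)/2$, so the stopped process $Z_k=V_i^k-\eps k$ has conditional drift at least $c:=(w_{\min}-\eps)/2>0$, increments bounded by $\Cst(a)$, and, on $H_n^1$, starts at $Z_n=V_i^n-\eps n\ge\eps n$. Writing the Doob decomposition $Z_k=Z_n+A_k+M_k$ with $A_k$ predictable, $A_k-A_n\ge c(k-n)$, and $M$ a martingale with bounded increments, the event $\{\tau_n^{3,i}\le\tau_n^1\wedge\tau_n^2\}$ forces $M_k-M_n\le-\eps n-c(k-n)$ for some $k\ge n$. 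The final step is the exponential supermartingale bound: picking $\lambda>0$ small enough that $\E\big(e^{-\lambda(M_{k+1}-M_k)}\,\big|\,\mathcal{F}_k\big)\le e^{\lambda c}$ (possible by Hoeffding, since the increments are mean-zero and bounded), the stopped process $\exp\!\big(-\lambda(M_{k\wedge\tau_n}-M_n)-\lambda c\,(k\wedge\tau_n-n)\big)$ is a supermartingale started at $1$, so Doob's maximal inequality gives
$$\Pb\Big(\exists\,k\ge n:\ M_k-M_n\le-\eps n-c(k-n)\ \Big|\ \mathcal{F}_n,H_n^1,H_n^2,H_n^3\Big)\le e^{-\lambda\eps n}=\exp\!\big(-\Cst(\eps)n\big),$$
whence $\Pb(\tau_n^{3,i}>\tau_n^1\wedge\tau_n^2\mid\cdots)\ge1-\exp(-\Cst(\eps)n)$; alternatively one may invoke Lemma~\ref{martest} on dyadic blocks. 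The main obstacle is precisely the drift estimate of the second paragraph: one must verify that, up to $\tau_n$, the frozen non-$\mathcal{G}$ edges and the fluctuations of the satellite ratios perturb the leading term $(ap)_i$ only by $O(k^{-1/2})$, so that the strict inequality $(ap)_i\ge w_{\min}>\eps$ is preserved and produces a genuinely positive drift for $V_i^k-\eps k$; the probabilistic tail is then routine.
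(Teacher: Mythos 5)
Your proof is correct and follows essentially the same route as the paper's: a Doob decomposition of $V_i^k$, a drift lower bound $w_{\min}-O(n^{-1/2})>\eps$ before $\tau_n$ obtained from the balance condition in $P_{\mathcal G}$ together with the estimates \eqref{estsm}, and an exponential martingale inequality, with $H_n^1$ providing the initial margin $V_i^n\ge 2\eps n$. The only difference is cosmetic: the paper controls the martingale part $\Xi_p$ by bounding $\sup_{p\ge n}|\Xi_p/p|$ via an Abel-summation trick and Lemma~\ref{martest}, whereas you run an exponential supermartingale against the linearly growing barrier directly; both give the same $\exp(-\Cst(\eps)n)$ tail.
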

\begin{proof}
Let $n\in\N$, assume that $H_n^1$, $H_n^2$ and $H_n^3$ hold, and fix $i\in V_{\mathcal{G}}$ such that $\pi(i)=i$. Let us consider the Doob decomposition of $(V_i^k)_{k\ge n}$:
\begin{align*}
V_i^k&:=V_i^n+\Phi_k+\Xi_k\\
\Phi_k&:=\sum_{j= n+1}^k\E\left(V_i^j-V_i^{j-1}\,|\,\mathcal{F}_{j-1}\right).
\end{align*}

Now, for all $\eta>0$, if $n\ge\verb?Cst?(\eta,\eps)$ and $k<\tau_n$, \eqref{estsm} implies
$$\Phi_{k+1}-\Phi_k= \E\left(V_i^{k+1}-V_i^{k}\,|\,\mathcal{F}_{k}\right)
\ge\sum_{j\stackrel{g}{\sim} i}a_{ij}p_{ij}\frac{(V_{ij}^k)^2}{V_i^kV_j^k}\ge w_{\min}-\eta,$$
if $n\ge\Cst(\e,\eta,p,a,|\V|)$.\\

Let us now estimate the martingale increment: let, for all $p\ge n$,
$$\chi_p:=\sum_{k=n}^{p-1}\frac{\Xi_{k+1}-\Xi_k}{k}.$$
Then, for all $p\ge n$,
$$\Xi_p=\sum_{n\le k\le p-1}(\chi_{k+1}-\chi_k)k
=-\sum_{n\le k\le p-1}\chi_k+(p-1)\chi_p.$$

This implies, using \cite[Lemma~7.4]{HST} (see Lemma~\ref{martest}) and $|\Xi_{k+1}-\Xi_k|\le1$ for all $k\ge n$, that for all $\eps>0$
\beqq
&&\Pb\left(\forall k\ge n,\,\, V_i^k\ge(2\eps-\eta)n+(k-n)(w_{\min}-\eta)\,|\,\mathcal{F}_n\right)\\ 
&&\ge
\Pb\left(\sup_{p\ge n}\left|\frac{\Xi_p}{p}\right|\le\eta\,|\,\mathcal{F}_n\right)
\ge\Pb\left(\sup_{k\ge n}\left|\chi_k\right|\le\frac{\eta}{2}\,|\,\mathcal{F}_n\right)
\ge1-2\exp(-\verb?Cst?(\eta)n);
\eeqq
we choose $\eta=\min(\eps,w_{\min}-\eps)$, which completes the proof.

\end{proof}

The following Lemma \ref{martest} states an exponential inequality for martingales (see for instance \cite{HST}, Lemma~7.4 for a proof).
\begin{lem}
\label{martest}
Let $(\gamma_k)_{k\in\N}$ be a deterministic sequence of positive reals, let $\mathbb{G}:=(\mathcal{G}_n)_{n\in\N}$ be a filtration, and let $(M_n)_{n\in\N}$ be a $\mathbb{G}$-adapted martingale such that $|M_{n+1}-M_n|\le\gamma_n$ for all $n\in\N$. Then, for all $n\in\N$ and $\lambda>0$,
$$\Pb\left(\sup_{k\ge n}(M_k-M_n)\ge\lambda\,|\,\mathcal{G}_n\right)\le\exp\left(-\frac{\lambda^2}{2\sum_{k\ge n}\gamma_k^2}\right).$$
\end{lem}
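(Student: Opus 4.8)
The plan is to use the standard exponential (Chernoff) method for martingales combined with a maximal inequality for nonnegative supermartingales, and to optimize the exponential parameter at the end. First I would fix $n\in\N$ and $\lambda>0$, abbreviate $S:=\sum_{k\ge n}\gamma_k^2$, and for a parameter $\theta>0$ introduce the process
\[
Y_k:=\exp\Big(\theta(M_k-M_n)-\frac{\theta^2}{2}\sum_{j=n}^{k-1}\gamma_j^2\Big),\qquad k\ge n,
\]
which satisfies $Y_n=1$ and $Y_k\ge0$.

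The key step is to show that $(Y_k)_{k\ge n}$ is a $\mathbb{G}$-supermartingale. This rests on a conditional Hoeffding bound: since $\E(M_{k+1}-M_k\,|\,\mathcal{G}_k)=0$ and $|M_{k+1}-M_k|\le\gamma_k$, the increment lies in $[-\gamma_k,\gamma_k]$, and convexity of $t\mapsto e^{\theta t}$ on that interval yields $\E\big(\exp(\theta(M_{k+1}-M_k))\,\big|\,\mathcal{G}_k\big)\le\exp(\theta^2\gamma_k^2/2)$. Multiplying through by the $\mathcal{G}_k$-measurable factor $\exp\big(\theta(M_k-M_n)-\frac{\theta^2}{2}\sum_{j=n}^{k-1}\gamma_j^2\big)$, the extra term $\exp(\theta^2\gamma_k^2/2)$ is exactly cancelled by the compensator, giving $\E(Y_{k+1}\,|\,\mathcal{G}_k)\le Y_k$.

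Next I would apply the maximal inequality for nonnegative supermartingales conditionally on $\mathcal{G}_n$ (Ville's inequality): for any $a>0$, $\Pb\big(\sup_{k\ge n}Y_k\ge a\,\big|\,\mathcal{G}_n\big)\le Y_n/a=1/a$. Since $\frac{\theta^2}{2}\sum_{j=n}^{k-1}\gamma_j^2\le\frac{\theta^2}{2}S$ for every $k$, one has $Y_k\ge\exp\big(\theta(M_k-M_n)-\frac{\theta^2}{2}S\big)$, so the event $\{\sup_{k\ge n}(M_k-M_n)\ge\lambda\}$ is contained in $\{\sup_{k\ge n}Y_k\ge\exp(\theta\lambda-\frac{\theta^2}{2}S)\}$. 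The maximal inequality then gives
\[
\Pb\Big(\sup_{k\ge n}(M_k-M_n)\ge\lambda\,\Big|\,\mathcal{G}_n\Big)\le\exp\Big(-\theta\lambda+\frac{\theta^2}{2}S\Big).
\]
Finally I would optimize over $\theta>0$: the right-hand side is minimized at $\theta=\lambda/S$, producing exponent $-\lambda^2/(2S)$ and hence the claimed bound.

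The main obstacle is really only the careful bookkeeping in the supermartingale step: one must choose the compensator $\frac{\theta^2}{2}\sum_{j=n}^{k-1}\gamma_j^2$ precisely so that the conditional Hoeffding factor is absorbed at every index $k$, with the $k$-dependence of the deterministic bounds $\gamma_k$ handled correctly. Once that is in place, the passage through Ville's inequality and the final optimization over $\theta$ are routine. A minor point worth noting is the finiteness of $S=\sum_{k\ge n}\gamma_k^2$: if $S=\infty$ the stated inequality is vacuously true, so the bound is only informative when the tail of $(\gamma_k^2)$ is summable.
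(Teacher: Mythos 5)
Your proof is correct. The paper does not prove this lemma itself but simply cites \cite{HST}, Lemma~7.4; your argument (conditional Hoeffding bound to show that $Y_k=\exp(\theta(M_k-M_n)-\tfrac{\theta^2}{2}\sum_{j=n}^{k-1}\gamma_j^2)$ is a nonnegative supermartingale, then Ville's maximal inequality and optimization over $\theta$) is the standard proof of this Azuma--Hoeffding-type maximal inequality and is exactly the kind of argument that reference gives. Your closing remarks on the finiteness of $\sum_{k\ge n}\gamma_k^2$ and on handling the supremum over all $k\ge n$ are the right points to be careful about, and you handle them correctly.
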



\nocite{*}
\bibliographystyle{plain}
\bibliography{bib-net}

\end{document}